\numberwithin{equation}{section}
\numberwithin{figure}{section}
\theoremstyle{plain}
\newtheorem{thm}{Theorem}[section]
  \theoremstyle{definition}
  \newtheorem{defn}[thm]{Definition}
  \theoremstyle{plain}
  \newtheorem{lem}[thm]{Lemma}
  \theoremstyle{plain}
  \newtheorem{prop}[thm]{Proposition}
  \theoremstyle{remark}
  \newtheorem{rem}[thm]{Remark}
 \theoremstyle{definition}
  \newtheorem{example}[thm]{Example}
  \theoremstyle{plain}
  \newtheorem{cor}[thm]{Corollary}
\begin{document}

\title{Super currents and tropical geometry}
\begin{abstract}
We introduce the formalism of positive super currents on $\mathbb{R}^{n}$,
in strong analogy with the theory of positive currents in $\mathbb{C}^{n}$.
We consider intersection of currents and Lelong numbers, and as an
application we show that the formalism can be used to describe tropical
varieties. This is similar in spirit to the fact that in complex analysis
the current of integration of an analytic variety can be identified
with a closed, positive current.
\end{abstract}

\author{Aron Lagerberg}

\email{aronl@chalmers.se}

\curraddr{Chalmers University of Technology and University of Gothenburg, Department
of Mathematics, 412 96 Gothenburg, Sweden }

\maketitle
\tableofcontents{}

In complex analysis, the natural counterpart of convexity in the real
setting, is that of plurisubharmonicity, and there are many simliarities
between convex- and plurisubharmonic functions. For instance, a smooth
function defined on $\mathbb{R}^{n}$ is convex if and only if its
Hessian is positive definite, and a smooth function defined on $\mathbb{C}^{n}$
is plurisubharmonic iff its (complex) Hessian is positive definite.
On the complex side, a natural way of studying plurisubharmonicity
is provided by the framework of so called positive currents. In fact,
a closed $(1,1)-$current is positive iff it locally can be represented
as $i\partial\bar{\partial}\varphi$ for a plurisubharmonic function
$\varphi$. However, one could argue that from the point of view of
geometry, the study of positive currents rather than that of plurisubharmonic
functions, is in a sense more natural. For instance, a variety of
higher co-dimension than 1, relates to closed, positive currents of
higher bi-degree. The aim of this paper is to introduce a notion of
positive currents correspondning to convex functions defined on $\mathbb{R}^{n}$.
This is carried out by letting ourselves be inspired by the complex
setting (indeed, many of the results and ideas will probably be familiar
to the mathematician knowledgeable in pluripotential theory). The
ideas can be seen as a continuation of those developed in \cite{Berndtsson}.
We then consider the framework of positive currents in the context
of tropical geometry, proving, amongst other things, that every tropical
hypersurface corresponds to a positive current satisfying certain
criterions. This is similar in spirit to the fact that in complex
analysis, a complex hypersurface can be represented by a positive
current satisfying certain hypotheses. Our hope is for this work to
provide a useful tool for attacking problems within tropical geometry,
and for it to serve as a gateway between complex analysis and tropical
geometry.

\subsection*{Acknowledgements. }

I would like to express my gratitude towards my advisor, Bo Berndtsson,
for his invaluable help with writing this article.

\section{Positive super forms and currents in $\mathbb{R}^{n}$}

Let $\mathbb{V}$ and $\mathbb{W}$ denote real vector spaces of dimension
$n$, with coordinates $x=(x_{1},...,x_{n})$ and $\xi=(\xi_{1},...,\xi_{n})$
respectively, for which we fix an isomorphism $J$:$\mathbb{V}\rightarrow\mathbb{W}$,
such that $J(x)=\xi$. We denote its inverse by $J$ as well, so that
$J(\xi)=x$, if $x\in\mathbb{V}$ is the element for which $J(x)=\xi$.
One should think of $\mathbb{V}$ and $\mathbb{W}$ as two copies
of $\mathbb{R}^{n}$ identified via the isomorphism $J$. Let $\mathbb{E}=\mathbb{V}\times\mathbb{W}=\{(x,\xi),x\in\mathbb{V},\xi\in\mathbb{W}\}.$
The map $J$ extends to $\mathbb{E}$ by letting $J(x,\xi)=(J(\xi),J(x)),$
so that $J^{2}=id.$ We consider the space $\mathcal{E}$ of smooth
differential forms on $\mathbb{E}$ whose coefficients only depend
on $x$. Thus, for $x\mapsto\alpha_{KL}(x)$ smooth functions, \begin{equation}
\alpha=\sum_{K,L}\alpha_{K,L}(x)dx_{K}\wedge d\xi_{L}\label{eq:firsteq}\end{equation}
 is such a form, where we use the notation $dx_{K}=dx_{k_{1}}\wedge...\wedge dx_{k_{p}}$
if $K=(k_{1},...,k_{p})$, $|K|$ denotes the length of the vector
$K,$ and similarly for $|L|=q$. We use the convention that we only
sum over indices $K$ and $L$ such that if $K=(k_{1},...,k_{p})$
then $k_{1}<...<k_{p}$, and similarly for the index $L$. If $\alpha$
is of the form \eqref{eq:firsteq}, we say that $\alpha$ is a form
of bi-degree $(p,q)$, and write $\alpha\in\mathcal{E}^{p,q}$ where
$0\leq p,q\leq n$. A form $\alpha$ of degree $(p,p)$ is called
symmetric if $\alpha_{KL}=\alpha_{LK}$ for all indices $K,L$. We
identify the isomorphism $J$ with $J^{*},$which is to say $J(dx_{i})=d\xi_{i},$
and we extend $J$ to an arbitrary $(p,q)-$form by the rule\[
J(\sum_{K,L}\alpha_{K,L}(x)dx_{K}\wedge d\xi_{L})=\sum_{K,L}\alpha_{K,L}(x)d\xi_{K}\wedge dx_{L}.\]
Thus a $(p,p)-$form $\alpha$ is symmetric iff $J(\alpha)=(-1)^{p}\alpha$.
Note that we use the letter $J$ to denote several, slightly different
maps, but we hope that no confusion will arise. Finally we put $\omega=\sum_{i=1}^{n}dx_{i}\wedge d\xi_{i}$
and $\omega_{n}=\frac{{1}}{n!}\omega^{n}$. In this article, we will
consider three different notions of positivity for forms:
\begin{defn}
A $(n,n)$-form $v$ is positive if $v=g\omega_{n}$ for some function
$g\geq0$. Let $v$ be a symmetric $(p,p)-$form.

\emph{i)} The $(p,p)-$form $v$ is \emph{weakly positive} if \[
v\wedge\alpha_{1}\wedge J(\alpha_{1})\wedge...\wedge\alpha_{n-p}\wedge J(\alpha_{n-p})\]
 is a positive $(n,n)$-form for every choice of $(1,0)$-forms $\alpha_{1},..,\alpha_{n-p}.$ 

\emph{ii) }We say that the $(p,p)-$form $v$ is \emph{positive},
if \[
v\wedge(\sigma_{n-p})\alpha\wedge J(\alpha)\geq0\]
 for every $(n-p,0)-$form $\alpha$, where \[
\sigma_{k}=(-1)^{\frac{k(k-1)}{2}}.\]

\emph{iii)} Finally, the $(p,p)-$form $v$ is \emph{strongly positive}
if \[
v=\sum_{s}c_{s}\alpha_{1,s}\wedge J(\alpha_{1,s})\wedge...\wedge\alpha_{p,s}\wedge J(\alpha_{p,s})\]
 where $c_{s}\geq0,$ and $\alpha_{j,s}$are $(1,0)$-forms. 
\end{defn}
In the following lemma we collect some elementary properties concerning
positive forms. 
\begin{lem}
\label{lem:elementaryproperties}The following properties hold:

1)\label{lemma_count} For $(1,0)$-forms $\alpha_{i}$,

\[
\alpha_{1}\wedge J(\alpha_{1})\wedge...\wedge\alpha_{p}\wedge J(\alpha_{p})=\]
\[
=\sigma_{p}\alpha_{1}\wedge\alpha_{2}\wedge...\wedge\alpha_{p}\wedge J(\alpha_{1})\wedge...\wedge J(\alpha_{p}).\]

2) If $w_{i}$ are strongly positive forms for $i=1,...,s$ and v
is a positive form, then 

$\,\,\,\,\,\,\,\,\, v\wedge w_{1}\wedge...\wedge w_{s}$ is positive.

3) The wedge product of finitely many symmetric forms is symmetric. 

4) We have the following inclusions: \[
\{\text{strongly positive forms}\}\subset\{\text{positive forms}\}\subset\{\text{weakly positive forms}\}.\]
\end{lem}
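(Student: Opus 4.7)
The plan is to prove the four parts in the order 1, 3, 4, 2, each making use of its predecessors.

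Part 1) is a direct sign count: in the product $\alpha_1\wedge J(\alpha_1)\wedge\cdots\wedge\alpha_p\wedge J(\alpha_p)$, moving each $(0,1)$-form $J(\alpha_i)$ to the right past the $(1,0)$-forms $\alpha_{i+1},\dots,\alpha_p$ costs $p-i$ transpositions, for a total of $\sum_{i=1}^{p}(p-i)=p(p-1)/2$, which is exactly the exponent defining $\sigma_p$. Part 3) reduces to the multiplicativity identity $J(\varphi\wedge\psi)=J(\varphi)\wedge J(\psi)$, which I verify on basis monomials by a single commutation of a block of $d\xi$'s past a block of $dx$'s; a symmetric $(p_i,p_i)$-form $w_i$ satisfies $J(w_i)=(-1)^{p_i}w_i$, so $J(w_1\wedge\cdots\wedge w_s)=(-1)^{p_1+\cdots+p_s}\,w_1\wedge\cdots\wedge w_s$, which is symmetry of the product.

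For part 4), the inclusion of positive in weakly positive is immediate from part 1): given $v$ positive and $(1,0)$-forms $\alpha_1,\dots,\alpha_{n-p}$, setting $\beta=\alpha_1\wedge\cdots\wedge\alpha_{n-p}$ gives $\alpha_1\wedge J(\alpha_1)\wedge\cdots\wedge\alpha_{n-p}\wedge J(\alpha_{n-p})=\sigma_{n-p}\beta\wedge J(\beta)$, which is $\ge 0$ when wedged with $v$ by definition of positivity. For the inclusion of strongly positive in positive, I expand $v=\sum_s\sigma_p c_s\,\gamma_s\wedge J(\gamma_s)$ with $\gamma_s=\alpha_{1,s}\wedge\cdots\wedge\alpha_{p,s}$; for any test $(n-p,0)$-form $\beta$, commuting $J(\gamma_s)$ past $\beta$ (sign $(-1)^{p(n-p)}$) and applying part 3) yields
\[
v\wedge\sigma_{n-p}\beta\wedge J(\beta)=\sum_s c_s\,\sigma_p\sigma_{n-p}(-1)^{p(n-p)}\,\delta_s\wedge J(\delta_s),\qquad \delta_s=\gamma_s\wedge\beta.
\]
Writing the top-degree form as $\delta_s=h_s\,dx_1\wedge\cdots\wedge dx_n$ gives $\delta_s\wedge J(\delta_s)=\sigma_n h_s^2\omega_n$, and a parity check shows $\sigma_p\sigma_{n-p}\sigma_n(-1)^{p(n-p)}=+1$ (the exponents add up to $n(n-1)$, which is even), so the whole sum is a non-negative multiple of $\omega_n$.

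Part 2) is a variant of the same argument: a wedge of strongly positive forms is itself strongly positive (expand and regroup the decomposable pieces), so I reduce to $v$ positive of bidegree $(p,p)$ and $w=\sum_s\sigma_r c_s\,\gamma_s\wedge J(\gamma_s)$ strongly positive of bidegree $(r,r)$. For a test $(n-p-r,0)$-form $\beta$, the same sign rearrangement together with the parity identity $\sigma_r\sigma_{n-p-r}(-1)^{r(n-p-r)}=\sigma_{n-p}$ converts $v\wedge w\wedge\sigma_{n-p-r}\beta\wedge J(\beta)$ into a non-negative combination of terms $v\wedge\sigma_{n-p}\eta_s\wedge J(\eta_s)$ with $\eta_s=\gamma_s\wedge\beta$, each $\ge 0$ by positivity of $v$. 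The proof is entirely mechanical; the only real work is keeping the sign factors straight, which reduces to the two parity identities above.
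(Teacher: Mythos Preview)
Your proof is correct and, for part 4) --- the only part the paper actually proves, leaving 1)--3) to the reader --- it follows essentially the same line: rewrite a decomposable wedge via part 1), commute the $(0,\cdot)$-block past the $(\cdot,0)$-block, and reduce everything to the parity identity $\sigma_p\sigma_{n-p}(-1)^{p(n-p)}=\sigma_n$. Your treatment of part 2) via the related identity $\sigma_r\sigma_{n-p-r}(-1)^{r(n-p-r)}=\sigma_{n-p}$ is the natural argument and fills in what the paper omits.
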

\begin{proof}
The proof of properties 1-3) are elementary, and left to the reader.
Let us prove property $4)$. Let $v$ be a positive $(p,p)-$form.
If $\alpha_{i}$ are $(1,0)-$forms for $1\leq i\leq n-p$, property
1) above gives us the equality \[
0\leq v\wedge\alpha_{1}\wedge J(\alpha_{1})\wedge...\wedge\alpha_{n-p}\wedge J(\alpha_{n-p})=v\wedge(\sigma_{n-p})\cdot\alpha_{1}\wedge...\wedge\alpha_{n-p}\wedge J(\alpha_{1}\wedge...\wedge\alpha_{n-p}),\]
and thus $v$ is weakly positive, which proves the second inclusion.
For the first inclusion, we note that for every $(p,0)-$form $\beta,$\[
\alpha_{1}\wedge J(\alpha_{1})\wedge...\wedge\alpha_{p}\wedge J(\alpha_{p})\wedge\sigma_{n-p}\beta\wedge J(\beta)=\sigma_{n-p}\sigma_{p}\alpha\wedge J(\alpha)\wedge\beta\wedge J(\beta)=\]
\[
=\sigma_{n-p}\sigma_{p}(-1)^{p(n-p)}\alpha\wedge\beta\wedge J(\alpha\wedge\beta),\]
where $\alpha=\alpha_{1}\wedge...\wedge\alpha_{n-p}$. A simple calculation
shows that $\sigma_{n-p}\sigma_{p}(-1)^{p(n-p)}=\sigma_{n}$, and
since $\alpha\wedge\beta\wedge J(\alpha\wedge\beta)=\sigma_{n}c^{2}\omega_{n}$
for some constant $c$, we finally obtain that \[
\alpha_{1}\wedge J(\alpha_{1})\wedge...\wedge\alpha_{p}\wedge J(\alpha_{p})\wedge\sigma_{n-p}\beta\wedge J(\beta)\geq0,\]
which proves the first inclusion. 
\end{proof}
The property of a form being positive is reflected in the associated
matrix of the form:
\begin{prop}
\label{lem:postivedefinite}Let $\alpha=\sum_{K,L}\alpha_{KL}(\sigma_{p}\cdot dx_{K}\wedge d\xi_{L})$
be a symmetric $(p,p)$-form. Then $\alpha$ is positive iff the matrix
$(\alpha_{KL})_{K,L}$ is positive definite. Moreover, if $\alpha$
is positive, we can find $(p,p)$-forms $\gamma_{k}$ for which \[
\alpha=\sum_{K}\alpha_{KK}(\sigma_{p}\cdot\gamma_{K}\wedge J(\gamma_{K})),\]
where $\alpha_{KK}\geq0$, for each $K.$ \end{prop}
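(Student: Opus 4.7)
The plan is to evaluate $\alpha\wedge\sigma_{n-p}\beta\wedge J(\beta)$ directly in coordinates for an arbitrary $(n-p,0)$-form $\beta$ and to read off the resulting quadratic form in the coefficients of $\beta$; positivity of $\alpha$ will then translate into positive semi-definiteness of the symmetric matrix $(\alpha_{KL})$.

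First I would fix notation. For each multi-index $K$ of length $p$, let $K^{c}$ denote the ordered complement in $\{1,\dots,n\}$, and let $\epsilon_{K}\in\{\pm1\}$ be the sign defined by $dx_{K}\wedge dx_{K^{c}}=\epsilon_{K}\,dx_{1}\wedge\cdots\wedge dx_{n}$ (with the analogous definition using $d\xi$). An arbitrary $(n-p,0)$-form may be written as $\beta=\sum_{M}c_{M}\,dx_{M}$ with $|M|=n-p$ and real coefficients $c_{M}$. Expanding the product $\alpha\wedge\sigma_{n-p}\beta\wedge J(\beta)$ and moving every $dx$-factor to the left of every $d\xi$-factor (contributing the sign $(-1)^{p(n-p)}$), only the terms with $M=K^{c}$ on the $dx$-side and $M'=L^{c}$ on the $d\xi$-side survive. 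Invoking the identity $\sigma_{p}\sigma_{n-p}(-1)^{p(n-p)}=\sigma_{n}$ already established inside the proof of Lemma \ref{lem:elementaryproperties}, together with $\sigma_{n}\,dx_{1}\wedge\cdots\wedge dx_{n}\wedge d\xi_{1}\wedge\cdots\wedge d\xi_{n}=\omega_{n}$ (which follows by rearranging $\omega_{n}=dx_{1}\wedge d\xi_{1}\wedge\cdots\wedge dx_{n}\wedge d\xi_{n}$ via property 1 of the same lemma), the expression collapses to
$$\alpha\wedge\sigma_{n-p}\beta\wedge J(\beta)=\Bigl(\sum_{K,L}\alpha_{KL}\,\tilde c_{K}\,\tilde c_{L}\Bigr)\,\omega_{n},\qquad \tilde c_{K}:=\epsilon_{K}\,c_{K^{c}}.$$
Since the correspondence $(c_{M})\leftrightarrow(\tilde c_{K})$ is a linear bijection of the space of real coefficient vectors, $\alpha$ is positive if and only if the symmetric matrix $(\alpha_{KL})$ is positive semi-definite.

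For the decomposition statement I would apply the spectral theorem to $(\alpha_{KL})$: diagonalise by an orthogonal change of basis to obtain eigenvalues $\lambda_{s}\geq0$ and orthonormal eigenvectors $(u^{(s)}_{K})_{K}$, with $\alpha_{KL}=\sum_{s}\lambda_{s}\,u^{(s)}_{K}u^{(s)}_{L}$. Setting $\gamma_{s}:=\sum_{K}u^{(s)}_{K}\,dx_{K}$, a direct substitution yields $\alpha=\sigma_{p}\sum_{s}\lambda_{s}\,\gamma_{s}\wedge J(\gamma_{s})$. Reindexing $s$ by a multi-index $K$ (so that in the new basis $(\gamma_{K})$ the matrix of $\alpha$ is diagonal with entries $\alpha_{KK}=\lambda_{K}\geq0$) produces exactly the asserted decomposition.

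The main obstacle is the careful sign bookkeeping in the wedge expansion: the four contributions $\sigma_{p}$, $\sigma_{n-p}$, $(-1)^{p(n-p)}$ and the pair $\epsilon_{K}\epsilon_{L}$ must collapse cleanly so that the coefficient of $\omega_{n}$ becomes precisely the bilinear form $\sum_{K,L}\alpha_{KL}\,\tilde c_{K}\,\tilde c_{L}$ with no residual signs. Once this combinatorial identity is verified, both halves of the proposition follow from standard linear algebra on real symmetric matrices.
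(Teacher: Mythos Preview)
Your proposal is correct and follows essentially the same route as the paper: a direct coordinate expansion of $\alpha\wedge\sigma_{n-p}\beta\wedge J(\beta)$ to exhibit the quadratic form $\sum_{K,L}\alpha_{KL}\beta_K\beta_L\,\omega_n$, followed by the spectral theorem for the decomposition. The only differences are cosmetic: the paper indexes $\beta$ directly by the complementary multi-index $K$ (writing $\beta=\sum_{|K|=p}\beta_K\,dx_{K^c}$) and thereby absorbs your $\epsilon_K$-signs into the choice of coefficients, and it phrases the sign identity as $\sigma_{n-p}\sigma_p(-1)^{n(n-p)}\sigma_n=1$ rather than your equivalent $\sigma_p\sigma_{n-p}(-1)^{p(n-p)}=\sigma_n$.
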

\begin{proof}
Let $\beta=\sum_{|K|=p}\beta_{K}dx_{K^{c}}$ be a $(n-p,0)-$form,
where $K^{c}$ denotes the complementary multi-index to $K$. Then
\[
\alpha\wedge(\sigma_{n-p}\cdot\beta\wedge J(\beta))=\sum_{|K|,|L|=p}\sigma_{n-p}\sigma_{p}\cdot\alpha_{KL}\beta_{K}\beta_{L}dx_{K}\wedge d\xi_{K}\wedge dx_{K^{c}}\wedge d\xi_{K^{c}}.\]
Thus, since $dx_{K}\wedge d\xi_{K}\wedge dx_{K^{c}}\wedge d\xi_{K^{c}}=(-1)^{n(n-p)}\sigma_{n}\omega_{n}$
and since \[
\sigma_{n-p}\sigma_{p}(-1)^{n(n-p)}\sigma_{n}=1\]
 by direct calculations, we obtain \[
\alpha\wedge(\sigma_{n-p}\cdot\beta\wedge J(\beta))=\sum_{|K|,|L|=p}\sigma_{n-p}\sigma_{p}(-1)^{n(n-p)}\sigma_{n}\alpha_{KL}\beta_{K}\beta_{L}\omega_{n}=\]
\[
=\sum_{|K|,|L|=p}\alpha_{KL}\beta_{K}\beta_{L}\omega_{n}.\]
Hence, if $(\beta)=(\beta_{K})_{|K|=p}$ is the vector corresponding
to the form $\beta$, \[
\alpha\wedge(\sigma_{n-p}\cdot\beta\wedge J(\beta))=(\beta)^{t}(\alpha_{IJ})(\beta)\omega_{n}.\]
It follows that $\alpha$ is positive if and only if $(\alpha_{KL})_{K,L}$
is positive definite. The second statement follows immediately from
the spectral theorem for positive definite, symmetric matrices. 
\end{proof}
The set of weakly positive and strongly positive forms are convex
cones, by definition dual under the paring

\[
(v,w)\mapsto v\wedge w,\]
where $v$ is a weakly positive $(p,p)$-form and $w$ is a strongly
positive $(q,q)$-form, and $p+q=n:$ By definition, $v$ is weakly
positive iff $v\wedge w$ is positive for each strongly positive $w$,
and since the bidual of a convex cone is equal to the closure of the
cone, we see that $w$ is strongly positive iff $w\wedge v$ is positive
for every weakly positive $v.$ Moreover, one can show that the cone
of positive forms is self-dual. At this point, we introduce the useful
notation $\widehat{dx_{i}}=dx_{1}\wedge....\wedge dx_{i-1}\wedge dx_{i+1}\wedge...dx_{n}$.
\begin{lem}
A symmetric, weakly positive (n-1,n-1)-form is strongly positive.
The same applies for symmetric $(1,1)$-forms.\end{lem}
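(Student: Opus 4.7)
The plan is to reduce both claims to Proposition~\ref{lem:postivedefinite} via one non-formal input: every $(n-1,0)$-form on $\mathbb{R}^{n}$ is decomposable, i.e.\ is a wedge of $(1,0)$-forms. This is the standard fact that $\gamma=\sum_{i}c_{i}\widehat{dx_{i}}$ can be written as a contraction $\iota_{V}(dx_{1}\wedge\dots\wedge dx_{n})$ for a suitable vector $V$; extending $V$ to a basis of $\mathbb{R}^{n}$ with dual coframe $\eta_{1},\dots,\eta_{n}$ expresses $\gamma$ as a scalar multiple of $\eta_{2}\wedge\dots\wedge\eta_{n}$. Apart from this, the argument is purely bookkeeping using Lemma~\ref{lem:elementaryproperties}, part~1).

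For a symmetric $(n-1,n-1)$-form $v$, weak positivity and positivity coincide directly from the definitions, since both amount to $v\wedge\beta\wedge J(\beta)\ge 0$ for every $(1,0)$-form $\beta$ (here $n-p=1$ and $\sigma_{1}=1$). Hence a weakly positive $v$ is positive, and Proposition~\ref{lem:postivedefinite} supplies a representation
\[
v=\sum_{K}v_{KK}\,\sigma_{n-1}\gamma_{K}\wedge J(\gamma_{K})
\]
with $v_{KK}\ge 0$ and $\gamma_{K}$ an $(n-1,0)$-form. Using decomposability, write $\gamma_{K}=\beta_{K,1}\wedge\dots\wedge\beta_{K,n-1}$; part~1) of Lemma~\ref{lem:elementaryproperties} then rewrites each summand as $\beta_{K,1}\wedge J(\beta_{K,1})\wedge\dots\wedge\beta_{K,n-1}\wedge J(\beta_{K,n-1})$, exhibiting $v$ as strongly positive.

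For a symmetric $(1,1)$-form $v$, I would use the same identity in the opposite direction: weak positivity says $v\wedge\beta_{1}\wedge J(\beta_{1})\wedge\dots\wedge\beta_{n-1}\wedge J(\beta_{n-1})\ge 0$ for all $(1,0)$-forms $\beta_{i}$, which by Lemma~\ref{lem:elementaryproperties} part~1) equals $v\wedge\sigma_{n-1}\gamma\wedge J(\gamma)\ge 0$ with $\gamma=\beta_{1}\wedge\dots\wedge\beta_{n-1}$. Since every $(n-1,0)$-form $\gamma$ arises this way (again decomposability), weak positivity is exactly the definition of positivity. Proposition~\ref{lem:postivedefinite} then delivers a spectral decomposition $v=\sum_{i}c_{i}\gamma_{i}\wedge J(\gamma_{i})$ with $c_{i}\ge 0$ and $\gamma_{i}$ $(1,0)$-forms, which by definition is strongly positive. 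The main (really the only) substantive step in the whole proof is the decomposability of top-degree-minus-one forms noted in the first paragraph.
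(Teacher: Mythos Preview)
Your argument is correct. For the $(n-1,n-1)$ case you follow essentially the paper's route: weak positivity coincides with positivity because $n-p=1$, then Proposition~\ref{lem:postivedefinite} diagonalizes, and each diagonal term is rewritten via Lemma~\ref{lem:elementaryproperties}(1). The paper does this with the explicit forms $\widehat{dx_i}$ (which are visibly decomposable), whereas you invoke the general decomposability of $(n-1,0)$-forms; these are the same idea.

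For the $(1,1)$ case your route genuinely differs from the paper's. The paper argues by duality of convex cones: having shown that every weakly positive $(n-1,n-1)$-form is strongly positive, a weakly positive $(1,1)$-form $\alpha$ satisfies $\alpha\wedge\beta\ge 0$ for every \emph{weakly} positive $(n-1,n-1)$-form $\beta$, and since the bidual of the strongly positive cone is its closure, $\alpha$ must be strongly positive. You instead reuse the decomposability of $(n-1,0)$-forms to show directly that weak positivity and positivity coincide for $(1,1)$-forms, and then apply Proposition~\ref{lem:postivedefinite} once more. Your approach is more self-contained (it avoids the bidual statement for convex cones) and pleasantly symmetric with the first half; the paper's approach illustrates how the duality between the weakly and strongly positive cones can be leveraged, which is a theme worth having in mind later.
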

\begin{proof}
Let $\alpha=\sum_{i,j=1}^{n}\alpha_{ij}\sigma_{n-1}\widehat{dx_{i}}\wedge\widehat{d\xi_{j}}$
be a symmetric, weakly positive $(n-1,n-1)$-form. By definition such
a form is also positive. Thus, by Proposition \ref{lem:postivedefinite}
we can assume that \[
\alpha=\sum_{i=1}^{n}\alpha_{ii}\sigma_{n-1}\widehat{dx_{i}}\wedge\widehat{d\xi_{i}},\]
with $\alpha_{ii}\geq0$. Thus, using property 1) of Lemma \ref{lem:elementaryproperties},
we see that \[
\alpha=\sum_{i=1}^{n}\alpha_{ii}(\sigma_{n-1})^{2}dx_{1}\wedge d\xi_{1}\wedge....\wedge dx_{i-1}\wedge d\xi_{i-1}\wedge dx_{i+1}\wedge d\xi_{i+1}\wedge...\wedge dx_{n}\wedge\xi_{n}\]
and hence, $\alpha$ is strongly positive which proves the first statement.
The second statement is a consequence of the duality between the convex
cones of weakly positive respectively strongly positive forms: Let
$\alpha$ be a weakly positive $(1,1)-$form. Then, since we have
just proved that every weakly positive $(n-1,n-1)-$forms is strongly
positive, we see that $\alpha\wedge\beta\geq0$ for every weakly positive
$(n-1,n-1)-$form $\beta$. By duality, this implies that $\alpha$
is strongly positive.
\end{proof}
In particular, the Lemma implies that the three different notions
of positivity coincides for forms of bi-degree $(0,0),(1,1),(n-1,n-1)$
and $(n,n)$. Thus, for such forms we will usually only use the epithet
{}``positive''. 

Let $\mathbb{V}',\mathbb{W}'$ be a real vector spaces of dimension
$m$, between which we fix an isomorphism $J'$ as above, and let
$\psi:\mathbb{V}\rightarrow\mathbb{V}'$ be an affine map. We let
$\mathbb{E}'=\mathbb{V}'\times\mathbb{W}'$. Then $\psi$ extends,
uniquely, to an affine map from $\mathbb{E}$ to $\mathbb{E}'$, which
we denote by $\tilde{\psi},$ by demanding that $\tilde{\psi}\circ J=J'\circ\tilde{\psi}$.
We can now define the pull-back operator $\tilde{\psi}^{*}:\mathcal{E}(\mathbb{E}')\rightarrow\mathcal{E}(\mathbb{E})$,
by letting \[
\tilde{\psi}^{*}(\sum_{I,J}\alpha_{IJ}dx_{I}\wedge d\xi_{J})=\sum_{I,J}(\alpha_{IJ}\circ\psi)\psi^{*}(dx_{I})\wedge\psi^{*}(d\xi_{J})\]
where, if $I=(i_{1},..,i_{p}),$ we as usual let $\psi^{*}(dx_{I})=\psi^{*}(dx_{i_{1}})\wedge...\wedge\psi^{*}(dx_{i_{p}}),$
and analogously for $\psi^{*}(d\xi_{J})$. Observe that the pull-back
operator commutes with the operator $J$, and also with the operator
$d$. When no confusion seems likely to arise, we will denote the
extension $\tilde{\psi}$ by $\psi$ as well. Note that if we did
not demand $\psi$ to be affine, the pullback of a form in $\mathcal{E}$
could have coefficients depending on the variable $\xi$, and thus
not be a map from $\mathcal{E}$ to $\mathcal{E}$. If $\psi:\mathbb{E}\rightarrow\mathbb{E}$
is an affine map, an easy computation shows that \begin{equation}
\tilde{\psi}^{*}(\omega_{n})=|det(\psi)|^{2}\omega_{n}.\label{eq:determinantpullback}\end{equation}
This implies that if $\psi$ corresponds to a change of coordinates,
$\tilde{\psi}^{*}\omega_{n}=c\omega_{n}$ for some constant $c>0$.
Thus, positivity does not depend on the form $\omega$ which we use
as a reference. If $\psi$ corresponds to an inclusion of a subspace
$\mathbb{V}\subset\mathbb{V}^{'}$, we call $\tilde{\psi}^{*}(\alpha)$
the restriction of the form $\alpha$ to the subspace $\mathbb{V}$. 
\begin{prop}
\label{pro:positiverestriction}With the above notation the following
holds: $\alpha$ is a weakly positive $(p,p)$-form on $\mathbb{E}^{'}$,
iff the restriction of $\alpha$ to every $p$-dimensional subspace
is positive, that is, if $\tilde{\psi}^{*}(\alpha)$ is positive for
every inclusion map $\psi:\mathbb{V}\rightarrow\mathbb{V}^{'}$, where
$\mathbb{V}$ is a $p$-dimensional subspace of $\mathbb{V}^{'}$. \end{prop}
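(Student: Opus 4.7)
\medskip

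\noindent\textbf{Proof plan for Proposition \ref{pro:positiverestriction}.} The plan is to reduce the statement to a single coordinate calculation by choosing, for each $p$-dimensional subspace $\mathbb{V}\subset\mathbb{V}'$, linear coordinates on $\mathbb{V}'$ in which $\mathbb{V}$ is the coordinate subspace $\{x_{p+1}=\cdots=x_{m}=0\}$. Since positivity is invariant under linear changes of coordinates (by the remark after \eqref{eq:determinantpullback}) and the isomorphism $J'$ transforms the $x$-coordinates to the $\xi$-coordinates compatibly, we may assume throughout the proof that the subspace is a coordinate subspace.

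\medskip

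\noindent\emph{Key coordinate calculation.} Set $I_{0}=(1,\ldots,p)$, and write $\alpha=\sum_{|I|=|J|=p}\alpha_{IJ}\,dx_{I}\wedge d\xi_{J}$ on $\mathbb{E}'$. Since the inclusion $\psi:\mathbb{V}\hookrightarrow\mathbb{V}'$ satisfies $\psi^{*}(dx_{i})=0$ for $i>p$, and $\tilde{\psi}^{*}$ commutes with $J$, we also have $\tilde{\psi}^{*}(d\xi_{i})=0$ for $i>p$. Hence only the $(I_{0},I_{0})$ term survives under pullback:
\[
\tilde{\psi}^{*}(\alpha)=\alpha_{I_{0}I_{0}}\circ\psi\;\,dx_{I_{0}}\wedge d\xi_{I_{0}}.
\]
On the other hand, testing weak positivity against the decomposable $(m-p,0)$-form $\beta=dx_{p+1}\wedge\cdots\wedge dx_{m}$, the identity in property 1) of Lemma \ref{lem:elementaryproperties} gives
\[
\alpha\wedge dx_{p+1}\wedge d\xi_{p+1}\wedge\cdots\wedge dx_{m}\wedge d\xi_{m}=\sigma_{m-p}\,\alpha\wedge\beta\wedge J(\beta),
\]
and again only the $(I_{0},I_{0})$ term survives, yielding, after reordering, a constant multiple of $\alpha_{I_{0}I_{0}}\omega_{m}$, with a sign that agrees with the $\sigma_{p}$-normalization governing positivity of $\tilde{\psi}^{*}(\alpha)$ (by the same sign identity $\sigma_{n-p}\sigma_{p}(-1)^{n(n-p)}\sigma_{n}=1$ used in the proof of Proposition \ref{lem:postivedefinite}). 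Thus both conditions, at this subspace, reduce to the single inequality $\alpha_{I_{0}I_{0}}\geq 0$.

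\medskip

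\noindent\emph{Both implications.} For the forward direction, if $\alpha$ is weakly positive, then for any $p$-dimensional $\mathbb{V}\subset\mathbb{V}'$ we choose adapted coordinates as above; the coordinate calculation then identifies positivity of the restriction with the particular instance of weak positivity tested against the $dx_{p+j}$'s. For the converse, given arbitrary $(1,0)$-forms $\alpha_{1},\ldots,\alpha_{m-p}$, there are two cases. If they are linearly dependent, then $\alpha_{1}\wedge\cdots\wedge\alpha_{m-p}=0$, so the whole expression in the definition of weak positivity vanishes and the inequality is trivial. If they are linearly independent, they span the annihilator of a unique $p$-dimensional subspace $\mathbb{V}$; choosing coordinates on $\mathbb{V}'$ so that $\alpha_{j}=dx_{p+j}$ reduces the desired inequality to positivity of the restriction to $\mathbb{V}$, which holds by hypothesis.

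\medskip

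\noindent\emph{Main obstacle.} The substantive content is almost entirely contained in the coordinate calculation, so the only real care needed is sign bookkeeping: checking that the sign appearing after reordering $dx_{I_{0}}\wedge d\xi_{I_{0}}\wedge dx_{p+1}\wedge d\xi_{p+1}\wedge\cdots\wedge dx_{m}\wedge d\xi_{m}$ to get a multiple of $\omega_{m}$ matches the sign convention built into the definition of positivity of a top-degree form on $\mathbb{E}=\mathbb{V}\times\mathbb{W}$. This is exactly the signs that were tabulated in the proof of Proposition \ref{lem:postivedefinite}, so we may simply invoke that identity.
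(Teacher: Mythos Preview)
Your proposal is correct and follows essentially the same approach as the paper: both arguments reduce to adapted coordinates in which the subspace is $\{x_{p+1}=\cdots=x_m=0\}$ and then observe that weak positivity tested against $dx_{p+1},\ldots,dx_m$ and positivity of the restriction are the same inequality. The only cosmetic difference is that the paper packages the coordinate change as an explicit invertible map $\Gamma$ and pulls the equation $c\,\omega_m=\alpha\wedge v_{p+1}\wedge J(v_{p+1})\wedge\cdots$ back through it (invoking \eqref{eq:determinantpullback}), whereas you invoke the invariance of positivity under linear coordinate changes up front and compute directly; the content is identical.
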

\begin{proof}
Suppose that $\alpha$ is a $(p,p)$-form such that $\psi^{*}\alpha$
is positive for every inclusion map $\psi:\mathbb{V}\rightarrow\mathbb{V}^{'}$
where $\mathbb{V}$ is a $p$-dimensional subspace of $\mathbb{V}^{'}$.
By choosing a basis on $\mathbb{V}'$ we can identify $\mathbb{V}'$
with $\mathbb{R}^{m}$, and regard $\mathbb{V}=\psi(\mathbb{V}')$
as a $p-$dimensional subspace of $\mathbb{R}^{m}$. We need to show
that, for any $(1,0)$-forms $v_{i}$, the number $c$ defined by
$c\omega_{m}:=\alpha\wedge v_{p+1}\wedge J(v_{p+1})\wedge...\wedge v_{m}\wedge J(v_{m})$
satisfies $c\geq0$. To this end, assume that $v_{p+1},...,v_{m}$
are linearly independent $(1,0)$-forms on $\mathbb{E}'$. These correspond
to independent vectors ${e_{p+1},...,e_{m}}$ in $\mathbb{R}^{m}$,
which we can extend to a basis $\{e_{1},...,e_{m}\}$ of $\mathbb{V}'$,
with corresponding forms $v_{i}$. We define two maps: the first,
$\psi:\mathbb{R}^{p}\rightarrow\mathbb{R}^{m}$, is given by $\psi(x_{1},...,x_{p})=\sum_{i=1}^{p}x_{i}e_{i}$
and the second, $\sigma:\mathbb{R}^{m-p}\rightarrow\mathbb{R}^{m}$,
is defined by $\sigma(x_{p+1},...,x_{m})=\sum_{i=p+1}^{m}x_{i}e_{i}$.
Using these two maps, we can define the invertible affine map $\Gamma:\mathbb{R}^{m}\rightarrow\mathbb{R}^{m}$
given by $\Gamma(x_{1},...,x_{m})=\psi(x_{1},...,x_{p})+\sigma(x_{p+1},...,x_{m})$.
Then \begin{equation}
\Gamma^{*}(c\omega_{m})=\Gamma^{*}(\alpha)\wedge\Gamma^{*}(v_{p+1}\wedge J(v_{p+1})\wedge...\wedge v_{m}\wedge J(v_{m}))=\label{eq:gammaeq}\end{equation}
\[
=\psi^{*}(\alpha)\wedge dx_{p+1}\wedge d\xi_{p+1}\wedge...\wedge dx_{m}\wedge d\xi_{m},\]
which, by assumption, is a positive $(m,m)-$form. By formula \eqref{eq:determinantpullback}
we see that $c\geq0$, as desired. Conversely, if $\alpha$ is a weakly
positive $(p,p)$-form on $\mathbb{E}^{'}$ and $\psi:\mathbb{V}\rightarrow\mathbb{V}'$
is a linear map of rank $p$ (which thus corresponds to an inclusion)
we let $\{e_{1},...,e_{m}\}$ be a basis of $\mathbb{V}'\simeq\mathbb{R}^{m}$
such that $\{e_{1},...,e_{p}\}$ spans the column space of $\psi,$
and define $\Gamma$ as above. Then equation \eqref{eq:gammaeq} is
still valid, showing that $\psi^{*}(\alpha)$ is a positive multiple
of $\omega_{p}$. 
\end{proof}
We want to define the integral of an $(n,n)-$form over the space
$\mathbb{E}$. For this, we assume that the vector space $\mathbb{V}$
is endowed with an inner product $(\cdot,\cdot)$, and choose an orthonormal
basis $\{e_{1},...,e_{n}\}$, with corresponding coordinates $(x_{1},...,x_{n})$.
We endow $\mathbb{W}$ with the same structure via the isomorphism
$J$. Then $dx_{1}\wedge....\wedge dx_{n}$ is a $(n,0)$ form on
$\mathbb{V}$, $d\xi_{1}\wedge....\wedge d\xi_{n}$ is an $(0,n)-$form
on $\mathbb{W}$, and every $(n,n)$-form $\alpha$ can be written
as \[
\alpha=\alpha_{0}(x)dx_{1}\wedge....\wedge dx_{n}\wedge d\xi_{1}\wedge...\wedge\xi_{n},\]
for some function $\alpha_{0}$ on $\mathbb{V}$.  
\begin{defn}
The integral of an $(n,n)-$form $\alpha$ as above is given by\[
\int_{\mathbb{E}}\alpha=\int_{\mathbb{V}}\alpha_{0}(x)dx_{1}\wedge....\wedge dx_{n}.\]

\end{defn}
The above definition depends only on the inner product chosen. Indeed,
if we choose a different orthonormal basis, say $\{e_{1}^{'},...,e_{n}^{'}\}$,
the map $\psi$ which sends $e_{i}$ to $e_{i}^{'}$ have determinant
$1$ or $-1$. In either case, by formula \eqref{eq:determinantpullback}
this means that \[
\int_{\mathbb{E}}\tilde{\psi}^{*}(\alpha)=(\pm1)^{2}\int_{\mathbb{E}}\alpha=\int_{\mathbb{E}}\alpha.\]
Thus the definition is independent of which orthonormal basis we choose
to work with. In particular, it does not depend on any orientation
of $\mathbb{V}$. One can also understand the definition as follows:
the choice of inner product allows us, as above, to choose a volume-element
$d\xi_{1}\wedge...\wedge d\xi_{n}$ on $\mathbb{W}$, the total volume
of which we define to be \[
\int_{\mathbb{W}}d\xi_{1}\wedge...\wedge d\xi_{n}=1.\]
Then, by formally applying Fubini's theorem, \[
\int_{\mathbb{E}}\alpha_{0}(x)dx_{1}\wedge....\wedge dx_{n}\wedge d\xi_{1}\wedge...\wedge\xi_{n}=\int_{\mathbb{V}}\alpha_{0}(x)dx_{1}\wedge....\wedge dx_{n}\cdot\int_{\mathbb{W}}d\xi_{1}\wedge...\wedge\xi_{n}=\]
\[
=\int_{\mathbb{V}}\alpha_{0}(x)dx_{1}\wedge....\wedge dx_{n}.\]
If $\Omega\subset\mathbb{V}$ is open, we define \[
\int_{\Omega\times\mathbb{W}}\alpha=\int_{\mathbb{E}}\chi_{\Omega}(x)\cdot\alpha_{0}(x)dx_{1}\wedge....\wedge dx_{n}\wedge d\xi_{1}\wedge...\wedge\xi_{n},\]
 where $\chi_{\Omega}$ denotes the characteristic function of the
set $\Omega.$ By formula \eqref{eq:determinantpullback} we have
the following change of variable formula for a non-singular, affine
map $\psi:\mathbb{V}\rightarrow\mathbb{V}^{'}$:\begin{equation}
\int_{\mathbb{E}}\tilde{\psi}^{*}\alpha=\int_{\mathbb{V}}|det(\psi)|^{2}\alpha_{0}(\psi(x))dx_{1}\wedge....\wedge dx_{n}=|det(\psi)|\int_{\mathbb{E}}\alpha.\label{eq:variablechangeformula}\end{equation}

If $L\subset\mathbb{V}$ is an oriented submanifold of dimension $k$,
and if $\alpha=\sum_{|I|=k}\alpha_{I}(x)dx_{I}\wedge d\xi_{1}\wedge...\wedge d\xi_{n}$
is an arbitrary $(k,n)-$form, we define the integral of $\alpha$
over $L\times\mathbb{W}$ by \[
\int_{L\times\mathbb{W}}(\sum_{|I|=k}\alpha_{I}(x)dx_{I}\wedge d\xi_{1}\wedge...\wedge d\xi_{n})=\sum_{|I|=k}\int_{L}\alpha_{I}(x)dx_{I}.\]

\begin{rem}
It might be interesting at this point to compare with the complex
setting. First of all, the map $J$ could be compared with the usual
complex structure which identifies $\mathbb{C}^{n}$ with $\mathbb{R}^{n}+i\mathbb{R}^{n}$.
Under this identification we compactify the imaginary directions by
considering $\mathbb{R}^{n}+i\mathbb{R}^{n}/\mathbb{Z}^{n}.$ A convex
function $f$ on $\mathbb{R}^{n}$ can then be regarded as a plurisubharmonic
function on $\mathbb{R}^{n}+i\mathbb{R}^{n}/\mathbb{Z}^{n},$ by demanding
the extension to be independent of the imaginary directions. In a
similiar way, we can consider a real $n-$form $\alpha$ as a complex
$(n,n)-$form $\tilde{\alpha}$, and in a natural way, \[
\int_{\mathbb{R}^{n}+i\mathbb{R}^{n}/\mathbb{Z}^{n}}\mbox{\ensuremath{\tilde{\alpha}}}=\int_{\mathbb{R}^{n}}\alpha,\]
 which could be seen as an analogue of our definition, \[
\int_{\mathbb{R}^{n}}d\xi_{1}\wedge....\wedge d\xi_{n}=1.\]
Generalizing this slightly, we use another lattice in order to compactify
the imaginary directions. Thus, if $\Gamma$ is any lattice, we consider
$\mathbb{R}^{n}+i\mathbb{R}^{n}/\Gamma$. Such a lattice induces an
inner product on $\mathbb{R}^{n}$. Indeed, $\Gamma$ is isomorphic
to $\mathbb{Z}^{n}$ via an affine map, and there is a one-to-one
correspondence between affine maps and inner products. In this sense,
we can say that choosing an inner product on $\mathbb{R}^{n}$ (thereby
defining integration of $(p,p)-$forms along $p$-dimensional subspaces
of $\mathbb{E}$) corresponds to compactifying using a lattice as
above, and vice versa. 
\end{rem}
We define the operator $d:\mathcal{E}^{p,q}\rightarrow\mathcal{E}^{p+1,q}$
by the formula \[
d(\sum_{|I|=p,|J|=q}\alpha_{IJ}dx_{I}\wedge d\xi_{J})=\sum_{|I|=p,|J|=q}(\sum_{i=1}^{n}\frac{\partial\alpha_{IJ}(x)}{\partial x_{i}}dx_{i}\wedge dx_{I}\wedge d\xi_{J}).\]
In our setting, we have the following version of Stokes' formula.
\begin{prop}
\label{pro:stokes}For $\Omega\subset\mathbb{V}$ a smoothly bounded,
open subset, and $\alpha$ an $(n-1,n)$-form on $\mathbb{E}$, \[
\int_{\Omega\times\mathbb{W}}d\alpha=\int_{\partial\Omega\times\mathbb{W}}\alpha.\]
\end{prop}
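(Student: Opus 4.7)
The plan is to reduce this super Stokes' formula to the classical Stokes' theorem on $\mathbb{V}$ by unwinding the definitions of $d$ and of the integrals on both sides. The only content here is that the $d\xi$-part is pure bookkeeping (it is simply ``integrated out'' via the convention $\int_{\mathbb{W}} d\xi_1\wedge\cdots\wedge d\xi_n = 1$), so everything reduces to an identity about ordinary differential forms on $\mathbb{V}$.

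First I would write $\alpha$ in the standard basis. Since $\alpha$ is an $(n-1,n)$-form and there are only $n$ choices of a multi-index $I$ with $|I|=n-1$, we can write
\[
\alpha = \sum_{i=1}^{n}\epsilon_i\,\alpha_i(x)\,\widehat{dx_i}\wedge d\xi_1\wedge\cdots\wedge d\xi_n,
\]
for suitable signs $\epsilon_i = \pm 1$ chosen so that the formulas align cleanly. Applying the definition of $d$ to this expression, only the diagonal derivatives $\partial\alpha_i/\partial x_i$ survive (the off-diagonal ones produce a repeated $dx$-factor), giving
\[
d\alpha = \Bigl(\sum_{i=1}^n \frac{\partial\alpha_i}{\partial x_i}(x)\Bigr)\,dx_1\wedge\cdots\wedge dx_n\wedge d\xi_1\wedge\cdots\wedge d\xi_n,
\]
once the signs $\epsilon_i$ are chosen correctly so that $\epsilon_i\,dx_i\wedge\widehat{dx_i} = dx_1\wedge\cdots\wedge dx_n$.

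Next I would apply the two definitions of integration provided just before the proposition. By definition of $\int_{\Omega\times\mathbb{W}}$, the left hand side equals
\[
\int_{\Omega\times\mathbb{W}} d\alpha = \int_{\Omega}\sum_{i=1}^n \frac{\partial\alpha_i}{\partial x_i}(x)\, dx_1\wedge\cdots\wedge dx_n,
\]
while by the definition of the integral of a $(k,n)$-form over $L\times\mathbb{W}$ (applied with $L=\partial\Omega$, $k=n-1$), the right hand side equals
\[
\int_{\partial\Omega\times\mathbb{W}}\alpha = \sum_{i=1}^n \epsilon_i\int_{\partial\Omega} \alpha_i(x)\,\widehat{dx_i}.
\]
Both sides are now honest integrals of ordinary differential forms on $\mathbb{V}$, and the identity between them is precisely the classical Stokes' theorem applied to the real $(n-1)$-form $\beta := \sum_i \epsilon_i\,\alpha_i(x)\,\widehat{dx_i}$ on $\mathbb{V}$, whose exterior derivative in the real sense is $d\beta = (\sum_i \partial\alpha_i/\partial x_i)\,dx_1\wedge\cdots\wedge dx_n$.

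The main (very minor) obstacle is purely notational: tracking the signs $\epsilon_i$ consistently between the definition of $d$ on super forms and the conventions used to define the integrals on $\Omega\times\mathbb{W}$ and $\partial\Omega\times\mathbb{W}$. There is no analytic content beyond classical Stokes; once the definitions are spelled out with a single consistent orientation of $\mathbb{V}$, the two identifications line up and the conclusion is immediate. Finally, general $\alpha$ is handled by linearity, and the smooth-boundary hypothesis on $\Omega$ is only needed to invoke the classical Stokes theorem in the last step.
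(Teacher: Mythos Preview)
Your proposal is correct and follows essentially the same approach as the paper: both reduce the super Stokes formula to the classical Stokes theorem on $\mathbb{V}$ by separating off the $d\xi$-factor (which integrates to $1$ over $\mathbb{W}$) and observing that what remains is the ordinary Stokes identity for the $(n-1)$-form $\sum_i \epsilon_i\,\alpha_i(x)\,\widehat{dx_i}$. The paper's version is slightly terser in that it does not expand in coordinates or track the signs $\epsilon_i$ explicitly, but the content is identical.
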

\begin{proof}
By the usual Stokes' formula, we have that \[
\int_{\Omega\times\mathbb{W}}d\alpha=\sum\int_{\Omega\times\mathbb{W}}d(\alpha_{IJ}(x)dx_{I})\wedge d\xi_{J}=\sum\int_{\Omega}d(\alpha_{IJ}(x)dx_{I})\int_{\mathbb{W}}d\xi_{J}=\]

\[
=\sum\int_{\partial\Omega}(\alpha_{IJ}(x)dx_{I})\int_{\mathbb{W}}d\xi_{J}=\int_{\partial\Omega\times\mathbb{W}}\alpha.\]

\end{proof}
We define the operator $d^{\#}:\mathcal{E}^{p,q}\rightarrow\mathcal{E}^{p,q+1}$
by \[
d^{\#}=J\circ d\circ J,\]
which in coordinates is equivalent to \[
d^{\#}=\sum_{j=1}^{n}\partial_{x_{j}}\wedge d\xi_{j}.\]
As immediately follows from the definition, $d^{2}=(d^{\#})^{2}=0,$
and moreover, $d^{\#}\circ J=J\circ d$.

\subsection{Currents and positivity}

In this section, we assume the reader to be familiar with the basic
theory of currents, but we include some proofs to illustrate the setting
in which we work. The precise definition of a current is tedious and
almost identical to the complex case, so we refer to \cite{Demailly}
for the details. The basic idea is that by introducing a topology
on $\mathcal{D}^{p,q}=\{\alpha\in\mathcal{E}^{p,q};\mbox{\ensuremath{\alpha}}\mbox{\text{ has compact support}}\}$,
we can consider the topological dual of $\mathcal{D}^{p,q}$, which
we define to be the space of currents $\mathcal{D}_{n-p,n-q}$. Suffice
it so say that an element of $\mathcal{D}_{n-p,n-q}$ can be viewed
as a $(n-p,n-q)-$form whose coefficients are distributions which
only acts on $\mathbb{V},$ that is, the coefficients are {}``independent
of $\xi$''. Thus every $T\in\mathcal{D}_{p,q}$ can be written as
\[
T=\sum_{|I|=p,|J|=q}T_{IJ}dx_{I}\wedge d\xi_{J},\]
where $T_{IJ}$ are uniquely defined distributions on $\mathbb{V}$.
We denote the paring between an element $\alpha\in\mathcal{D}^{p,q}$
and $T\in\mathcal{D}_{n-p,n-q}$ by $\left\langle T,\alpha\right\rangle ,$
and we use the convention that \[
\left\langle T_{IJ}dx_{I}\wedge d\xi_{J},\alpha_{0}dx_{I^{c}}\wedge d\xi_{J^{c}}\right\rangle =\pm\left\langle T_{IJ},\alpha_{0}\right\rangle ,\]
where the sign is determined by the sign of the permutation sending
$dx_{I}\wedge d\xi_{J}\wedge dx_{I^{c}}\wedge d\xi_{J^{c}}$ to $\omega_{n}$.
For convenience, when a current acts on an element of $\mathcal{E}$,
we always assume this element to have compact support, without explicitly
stating so. As usual we can define $dT,d^{\#}T,dd^{\#}T$ by $\left\langle dT,\alpha\right\rangle =\pm\left\langle T,d\alpha\right\rangle $
and so forth. Thus we say that a current $T$ is \emph{$d-$closed
}if $dT=0,$ and similarly for $d^{\#}.$ These operators so defined,
act continuously on the space of currents.

Now, let $\rho$ be a smooth, radial function with support in the
unit ball, satisfying $\int\rho=1,$ and put $\rho_{\epsilon}(x)=\frac{1}{\epsilon^{n}}\rho(\frac{x}{\epsilon}),$
for $\epsilon>0.$ If we consider the convolution of a current $T=\sum_{|I|=p,|J|=q}T_{IJ}dx_{I}\wedge d\xi_{J}$
with this function $\rho_{\epsilon}$, defined in the usual way as
\begin{equation}
T\star\rho_{\epsilon}=\sum_{|I|=p,|J|=q}(T_{IJ}\star\rho_{\epsilon})dx_{I}\wedge d\xi_{J},\label{eq:convolutionregularize}\end{equation}
 then ${\{T\star\rho_{\epsilon}\}}_{\epsilon>0}$ defines a family
in $\mathcal{E}^{p,q}$ converging weakly to the current $T,$ as
$\epsilon\rightarrow0.$ We call this family a regularization of the
current $T$, and it is easy to see that if $dT=0,$ then $d(T\star\rho_{\epsilon})=0,$
so regularization preserves the property of being closed. 
\begin{lem}
Let $\Omega\subset\mathbb{V}$ be an open set. For \label{cohomologycurrentlemma}a
$d-$closed current $T\in\mathcal{D}_{p,q}(\Omega\times\mathbb{W})$
there exists a $T'\in\mathcal{E}^{p,q}(\Omega\times\mathbb{W})$ such
that $T-T'=dR$ for some $R\in\mathcal{D}_{p-1,q}(\Omega\times\mathbb{W}).$\end{lem}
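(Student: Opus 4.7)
The plan is to reduce this cohomological statement to the classical de Rham regularization theorem for currents on the open set $\Omega \subset \mathbb{V} \simeq \mathbb{R}^n$. Since the operator $d$ defined in the paper only differentiates the $x$-variables, writing
\[
T = \sum_{|I|=p,\,|J|=q} T_{IJ}\, dx_I \wedge d\xi_J
\]
and collecting terms by the multi-index $J$, the condition $dT = 0$ is equivalent to the vanishing of $dT_J$ for every fixed $J$, where $T_J := \sum_{|I|=p} T_{IJ}\, dx_I$ is an ordinary $p$-current on $\Omega$ (the factors $d\xi_J$ being inert under $d$).

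For each such $J$ I would invoke the classical fact (see \cite{Demailly}) that every $d$-closed current on an open subset of $\mathbb{R}^n$ differs from a smooth form by a $d$-exact current. Concretely, one constructs a smoothing operator $\pi$ and a homotopy $K$ on currents, satisfying $\mathrm{id} - \pi = dK + Kd$. Applied to $T_J$, the closedness $dT_J = 0$ gives $T_J - \pi T_J = d(K T_J)$, so that $T'_J := \pi T_J$ is a smooth $p$-form on $\Omega$ and $S_J := K T_J$ is a $(p-1)$-current on $\Omega$ with $T_J - T'_J = dS_J$.

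Reassembling, set
\[
T' := \sum_J T'_J \wedge d\xi_J \in \mathcal{E}^{p,q}(\Omega \times \mathbb{W}),\qquad R := \sum_J S_J \wedge d\xi_J \in \mathcal{D}_{p-1,q}(\Omega \times \mathbb{W}),
\]
and since $d$ ignores the $d\xi_J$ factors, $dR = \sum_J (dS_J) \wedge d\xi_J = \sum_J (T_J - T'_J)\wedge d\xi_J = T - T'$, as desired.

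The main obstacle lies in the construction of $\pi$ and $K$ on a general, not necessarily convex, open set $\Omega$: the naive regularization $T_J \star \rho_\epsilon$ from \eqref{eq:convolutionregularize} is only defined on the slightly smaller set $\{x\in\Omega : \mathrm{dist}(x,\partial\Omega) > \epsilon\}$, so one must instead cover $\Omega$ by a locally finite family of convex patches, smooth on each patch with a variable scale adapted to the cover, and glue the resulting local homotopies together using a partition of unity. This is the standard but somewhat intricate ingredient of de Rham--Weil theory; once it is granted, the super-current version of the lemma follows immediately from the $J$-by-$J$ decomposition above, since the $d\xi_J$-variables only contribute bookkeeping.
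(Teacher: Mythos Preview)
Your proposal is correct and follows essentially the same approach as the paper: both decompose $T = \sum_J T_J \wedge d\xi_J$, observe that $dT=0$ forces $dT_J=0$ for each $J$, invoke the classical fact that a $d$-closed current on an open subset of $\mathbb{R}^n$ differs from a smooth form by a $d$-exact current, and reassemble. The only difference is that you spell out the de Rham--Weil homotopy construction behind that classical fact, whereas the paper simply cites it as well known.
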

\begin{proof}
Let \[
T=\sum_{|I|=p,|J|=q}T_{IJ}dx_{I}\wedge d\xi_{J}.\]
 If we denote by $S_{J}$ the $(p,0)$-currents $\sum_{|I|=p}T_{IJ}dx_{I}$,
then \[
T=\sum_{|J|=q}S_{J}\wedge d\xi_{J},\]
and by the hypothesis $dS_{J}=0.$ It is well known from theory of
currents in $\mathbb{V}$, that for every such $S_{J}$ there is a
smooth $(p,0)-$form $S_{J}^{'}$ such that $S_{J}-S_{J}^{'}=dR_{J}$
for some $(p-1,0)-$current $R_{J}$ (where we identify $(p,0)-$currents
on $\mathbb{E}$ with $p-$currents on $\mathbb{V}$). Thus, if we
let $T^{'}=\sum_{|J|=q}S_{J}^{'}\wedge d\xi_{J},$ and $R=\sum_{|J|=q}R_{J}\wedge d\xi_{J}$
we have that $T^{'}$ is smooth and \[
T-T^{'}=\sum_{|J|=q}(S_{J}-S_{J}^{'})\wedge d\xi_{J}=\sum_{|J|=q}dR_{J}\wedge d\xi_{J}=d\sum_{|J|=q}R_{J}d\xi_{J}=dR,\]
as required.\end{proof}
\begin{prop}
\label{pro:poincarecurrentprop}Let $\Omega\subset\mathbb{V}$ be
a star shaped open subset. If $T\in\mathcal{D}_{p,q}(\Omega\times\mathbb{W})$
is d-closed, in the sense that $dT=0,$ and $q\geq1,$ then there
exists an element $\tilde{T}\in\mathcal{D}_{p-1,q}(\Omega\times\mathbb{W})$
such that $d\tilde{T}=T.$ An analogous statement holds if $T$ is
instead $d^{\#}-$closed.\end{prop}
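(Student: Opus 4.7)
The plan is to reduce this to the classical Poincar\'e lemma for $d$-closed currents on the star-shaped open set $\Omega \subset \mathbb{V}$. The key observation is that the operator $d$ involves only $x$-derivatives, so the $\xi$-direction factors play no role and the ``extra'' $d\xi_J$ simply rides along as a passive coefficient. Concretely, decompose
\[
T = \sum_{|J|=q} S_J \wedge d\xi_J,
\]
where each $S_J$ is a $(p,0)$-current, identified with an ordinary $p$-current on $\Omega$. Since $d(d\xi_J) = 0$, the assumption $dT = 0$ becomes $\sum_{|J|=q} dS_J \wedge d\xi_J = 0$, and linear independence of the $d\xi_J$ forces $dS_J = 0$ for every multi-index $J$.

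It now suffices to find, for each $J$, a $(p-1,0)$-current $\tilde S_J$ on $\Omega$ with $d\tilde S_J = S_J$, and to set $\tilde T = \sum_{|J|=q} \tilde S_J \wedge d\xi_J$; the identity $d\tilde T = T$ is then immediate. The existence of such $\tilde S_J$ on the star-shaped domain $\Omega$ is the classical Poincar\'e lemma for currents. The cleanest way to invoke it in our framework is to first apply Lemma \ref{cohomologycurrentlemma}: write $S_J = S_J' + dR_J$ with $S_J'$ smooth and (automatically) $d$-closed, solve $d\tilde S_J' = S_J'$ by the smooth Poincar\'e lemma on the star-shaped $\Omega$ using the standard radial homotopy operator, and then take $\tilde S_J = \tilde S_J' + R_J$.

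For the $d^\#$-closed version, one exploits the identity $d^\# = J \circ d \circ J$ together with $J^2 = \mathrm{id}$. If $d^\# T = 0$, then $d(JT) = 0$, so the first part of the proposition (applied to the current $JT$, whose $(x,\xi)$-bidegree is swapped) yields $R$ with $dR = JT$; then $\tilde T := JR$ satisfies $d^\# \tilde T = J(dR) = J J T = T$. The only delicate point in the whole argument is really the invocation of the classical Poincar\'e lemma for currents, which is standard but requires either the homotopy-operator construction via integration along rays from the star-centre (acting continuously on currents) or, as outlined above, reduction to the smooth case by Lemma \ref{cohomologycurrentlemma}; either route is routine.
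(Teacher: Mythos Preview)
Your proof is correct and follows essentially the same approach as the paper: decompose $T=\sum_{|J|=q}S_J\wedge d\xi_J$, use $dT=0$ to get $dS_J=0$, reduce each $S_J$ to a smooth closed form via Lemma~\ref{cohomologycurrentlemma}, and then apply the classical Poincar\'e lemma on the star-shaped $\Omega$. The only cosmetic difference is the order of operations---the paper first invokes Lemma~\ref{cohomologycurrentlemma} on $T$ itself and then decomposes the resulting smooth form, whereas you decompose first and smooth each $S_J$ separately; and for the $d^{\#}$-case you spell out the conjugation by $J$ explicitly while the paper simply says ``the same argument applies with the obvious changes.''
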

\begin{proof}
By Lemma \ref{cohomologycurrentlemma}, it suffices to show the proposition
in the case where $T$ has smooth coefficients, that is, $T\in\mathcal{E}^{p,q}.$
To this end assume that \[
T=\sum_{|I|=p,|J|=q}T_{IJ}dx_{I}\wedge d\xi_{J},\]
with $T_{IJ}$ smooth. If we, as above, denote by $S_{J}$ the $(p,0)$-forms
$\sum_{|I|=p}T_{IJ}dx_{I}$, then $T=\sum_{|J|=q}S_{J}\wedge d\xi_{J}$,
and since $dT=\sum_{|J|=q}(dS_{J})\wedge d\xi_{J}=0$ by the hypothesis,
we see that $dS_{J}=0.$ Since $\Omega$ is star shaped, the Poincaré
lemma tells us that there exists $(p-1,0)-$forms, $\tilde{S}_{J}$
such that $d\tilde{S}_{J}=S_{J}$. Thus, if we consider the $(p-1,q)-$form
$\tilde{T}=\sum_{|J|=q}\tilde{S}_{J}\wedge d\xi_{J}$, we see that
it satisfies \[
d\tilde{T}=\sum_{|J|=q}d\tilde{S}_{J}\wedge d\xi_{J}=\sum_{|J|=q}S_{J}\wedge d\xi_{J}=T.\]
If $T$ is instead $d^{\#}-$closed, the same argument as above still
applies with the obvious changes.
\end{proof}
We now come to the corresponding notions of positivity for currents.
\begin{defn}
We say that a symmetric $(p,p)-$current $T$ is \emph{weakly positive}
if \[
\left\langle T,\alpha\right\rangle \geq0\]
for each smooth \emph{strongly positive} $(n-p,n-p)-$form $\alpha$
with compact support. $T$ is positive if \[
\left\langle T,\sigma_{n-p}\beta\wedge J(\beta)\right\rangle ,\]
for every smooth $(p,0)$- form $\beta$ with compact support. 
\end{defn}
We postpone the definition of a strongly positive $(p,p)-$current
until section \ref{sec:Intersection-theory-of}. 
\begin{prop}
A function $f:\mathbb{V}\rightarrow\mathbb{R}$ is convex iff $dd^{\#}f$
is a positive (1,1)-current.\end{prop}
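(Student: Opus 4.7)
My plan is to establish the result first in the smooth case by a direct matrix computation combined with Proposition~\ref{lem:postivedefinite}, and then to extend to general $f$ by convolution regularization. The key additional input for the general case is that convolution with a non-negative radial mollifier preserves both the convexity of a function and the positivity of a current.

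For $f\in C^{2}$, one computes $d^{\#}f=\sum_{i}(\partial_{x_{i}}f)\,d\xi_{i}$, and therefore
\[
dd^{\#}f=\sum_{i,j}(\partial_{x_{j}}\partial_{x_{i}}f)\,dx_{j}\wedge d\xi_{i}.
\]
This is a symmetric $(1,1)$-form whose coefficient matrix (in the normalization of Proposition~\ref{lem:postivedefinite}, noting that $\sigma_{1}=1$) is exactly the Hessian of $f$. Proposition~\ref{lem:postivedefinite} then equates positivity of $dd^{\#}f$ with positive semi-definiteness of the Hessian, which is the classical characterization of convexity for $C^{2}$ functions.

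For general (locally integrable) $f$, set $f_{\epsilon}:=f\star\rho_{\epsilon}$ with a non-negative smooth radial mollifier. Since $d$ and $d^{\#}$ commute with scalar convolution, one has $dd^{\#}f_{\epsilon}=(dd^{\#}f)\star\rho_{\epsilon}$. In the forward direction, convexity of $f$ transfers to each smooth $f_{\epsilon}$ (the defining midpoint inequality is preserved under averaging against $\rho_{\epsilon}\geq 0$), so the smooth case gives that $dd^{\#}f_{\epsilon}$ is a positive form, and weak convergence $dd^{\#}f_{\epsilon}\to dd^{\#}f$ yields positivity of $dd^{\#}f$ as a current. For the reverse direction, positivity of $dd^{\#}f$ passes to each smooth form $dd^{\#}f_{\epsilon}$ via the identity
\[
\langle (dd^{\#}f)\star\rho_{\epsilon},\,\alpha\rangle=\int\langle dd^{\#}f,\,\tau_{y}\alpha\rangle\,\rho_{\epsilon}(y)\,dy\geq 0
\]
for every strongly positive test form $\alpha$, where $\tau_{y}$ denotes translation (which preserves strong positivity pointwise) and $\rho_{\epsilon}\geq 0$. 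By the smooth case each $f_{\epsilon}$ is then convex, and convexity is inherited by the limit $f$.

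The step I expect to be most delicate is this final limiting step in the reverse direction: a priori we only have $f_{\epsilon}\to f$ in $L^{1}_{\mathrm{loc}}$, so some care is required to conclude pointwise convexity of $f$. The standard device is to observe that positivity of $dd^{\#}f$ forces $f$ to be subharmonic (its Laplacian, the trace of the Hessian, is nonnegative as a distribution), and hence $f_{\epsilon}(x)$ is monotonically decreasing in $\epsilon$ for the radial mollifier; the resulting pointwise decreasing limit coincides with $f$ almost everywhere and inherits convexity from the $f_{\epsilon}$.
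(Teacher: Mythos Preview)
Your proof is correct and follows essentially the same approach as the paper: both handle the smooth case via Proposition~\ref{lem:postivedefinite} applied to the Hessian, and both pass to general $f$ by mollification $f_{\epsilon}=f\star\rho_{\epsilon}$, using that convolution preserves convexity in one direction and positivity of $dd^{\#}$ in the other. You are in fact slightly more careful than the paper in the reverse limiting step, explicitly invoking subharmonicity to obtain the monotonicity $f_{\epsilon}\searrow$ that the paper tacitly uses when it asserts $f_{\epsilon}\geq f$ and uniform convergence on compacts.
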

\begin{proof}
This is clear if $f$ is smooth since the matrix associated to $dd^{\#}f$
is the Hessian of $f$, so we can apply Proposition \ref{lem:postivedefinite}.
The general case follows by approximation: if $f$ is convex but not
smooth, we can find a family $\{f_{\epsilon}\}_{\epsilon}$ of smooth,
convex functions such that $f_{\epsilon}\rightarrow f.$ Using the
definition of currents, we see that $dd^{\#}f_{\epsilon}\rightarrow dd^{\#}f$
in the weak sense, and thus, $\left\langle dd^{\#}f,\alpha\right\rangle =\lim_{\epsilon\rightarrow0}\left\langle dd^{\#}f_{\epsilon},\alpha\right\rangle \geq0,$
for every positive $(n-1,n-1)-$form $\alpha.$ Hence $dd^{\#}f\geq0$.
Conversely, if $dd^{\#}f\geq0$, we put $f_{\epsilon}(x)=\int f(y)\rho_{\epsilon}(x-y)$
where $\rho_{\epsilon}$ is the regularizing kernel from above. One
easily verifies that $dd^{\#}f_{\epsilon}\geq0$ and hence $f_{\epsilon}$
is convex. Moreover, $f_{\epsilon}$ is smooth, $f_{\epsilon}\geq f$
and $f_{\epsilon}\rightarrow f$ uniformly on compacts. Thus $f$
is convex, as desired. 
\end{proof}
Of course, if $f$ is also smooth, then $dd^{\#}f$ is a positive,
closed $(1,1)-form.$ The following proposition is fundamental for
what is to come; it is the counterpart in our setting of the so called
$dd^{c}$-lemma from complex analysis:
\begin{prop}
\label{pro:ddsharplemma}($dd^{\#}-lemma)$ Let $T$ be a closed,
positive (1,1)-current on $\mathbb{E}$. Then there exists a convex
function $f:\mathbb{V}\rightarrow\mathbb{R}$ for which \[
T=dd^{\#}f.\]
\end{prop}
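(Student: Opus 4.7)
The plan is to construct $f$ from $T$ by two applications of the Poincar\'e lemma (using the symmetry of $T$ crucially in between), and then to read off convexity of $f$ from positivity of $T$.

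First, since $T$ is a $d$-closed $(1,1)$-current and $\mathbb{V}$ is star-shaped, Proposition \ref{pro:poincarecurrentprop} provides a $(0,1)$-current $R = \sum_{j} R_{j}\, d\xi_{j}$ on $\mathbb{E}$ with $dR = T$. Comparing coefficients, this means $T_{ij} = \partial_{x_{i}} R_{j}$ as distributions on $\mathbb{V}$.

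Second, I would use the symmetry of $T$ (which is built into the definition of positivity for $(1,1)$-currents): $T_{ij} = T_{ji}$ forces $\partial_{x_{i}} R_{j} = \partial_{x_{j}} R_{i}$, so the $(1,0)$-current $J(R) = \sum_{j} R_{j}\, dx_{j}$ is $d$-closed. Since its coefficients depend only on $x$, $J(R)$ is in effect a distributional $1$-form on $\mathbb{V}$, and the classical Poincar\'e lemma for currents on the star-shaped set $\mathbb{V}$ (the same fact invoked inside the proof of Proposition \ref{pro:poincarecurrentprop}) supplies a distribution $f$ on $\mathbb{V}$ with $J(R) = df$. Applying $J$ to both sides, using $J^{2} = \mathrm{id}$ together with the identity $d^{\#} = J \circ d \circ J$, we obtain $R = J(df) = d^{\#}f$, whence $T = dR = dd^{\#}f$.

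Third, I would argue that $f$ is represented by a convex function. Positivity of $T$ (via Proposition \ref{lem:postivedefinite} applied at the level of distributional coefficients) says that the matrix $(T_{ij}) = (\partial_{x_{i}}\partial_{x_{j}} f)$ is positive semidefinite as a matrix of Radon measures. Regularizing as in the proof of the preceding proposition, $f_{\epsilon} := f \star \rho_{\epsilon}$ is smooth with positive semidefinite Hessian and hence convex; after a normalization (for instance, subtracting an affine function so that $f_{\epsilon}$ and its differential vanish at a fixed base point of $\mathbb{V}$), the family $\{f_{\epsilon}\}$ is locally uniformly bounded and converges, along a subsequence, to a convex function on $\mathbb{V}$ equal to $f$ almost everywhere.

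The most delicate step is the third one: passing from the distributional identity $T = dd^{\#}f$ to a genuine convex representative requires a compactness argument for convex functions together with a careful normalization, to ensure that the limit exists and still satisfies $T = dd^{\#}f$. The first two steps are essentially formal once the Poincar\'e lemma is in place.
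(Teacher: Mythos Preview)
Your proof is correct and follows essentially the same route as the paper: apply the Poincar\'e lemma to write $T=dR$, use the symmetry $T_{ij}=T_{ji}$ to see that $R$ is $d^{\#}$-closed (equivalently, $J(R)$ is $d$-closed), apply Poincar\'e again to get $R=d^{\#}f$, and read off convexity from positivity. The only cosmetic difference is placement of the regularization: the paper regularizes $T$ at the outset so that all objects are smooth and convexity of $f$ is immediate, whereas you work distributionally and mollify $f$ at the end (and are in fact more explicit than the paper about passing from the smooth to the general case). One very minor quibble: once you normalize each $f_{\epsilon}$ by an $\epsilon$-dependent affine function, the subsequential limit is a convex potential for $T$ but need not equal your original distribution $f$ almost everywhere; this is harmless, since the statement only asks for existence of some convex potential.
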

\begin{proof}
By regularization, we can assume $T$ to be smooth. Since $T$ is
also closed, by Proposition \ref{pro:poincarecurrentprop} we can
find a smooth $(0,1)-$current \[
S=\sum_{i=1}^{n}S_{i}d\xi_{i}\]
 such that $T=dS.$ Moreover, $T$ being symmetric implies $\frac{\partial}{\partial x_{j}}S_{i}=T_{ij}=T_{ji}=\frac{\partial}{\partial x_{i}}S_{j}$,
and so we see that \[
d^{\#}S=\sum_{i,j=1}^{n}\frac{{\partial}}{\partial x_{j}}S_{i}d\xi_{j}\wedge d\xi_{i}=0.\]
 Another application of Proposition \ref{pro:poincarecurrentprop}
provides us with a function $f$ such that $S=d^{\#}f$. We conclude
that \[
T=dd^{\#}f,\]
and since $T$ is positive, $f$ must be convex.
\end{proof}
Using this proposition we can show the following:
\begin{prop}
\label{pro:suppconvex}If $T$ is a closed, positive (1,1)-current,
then each component of $(SuppT)^{c}$ is convex. \end{prop}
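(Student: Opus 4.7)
The plan is to invoke the just-proven $dd^\#$-lemma to write $T=dd^\# f$ for some convex function $f:\mathbb{V}\to\mathbb{R}$. Because the coefficients of $T$ are independent of $\xi$, the support of $T$ has the product form $A\times\mathbb{W}$ with $A\subset\mathbb{V}$, so its complement decomposes as $(\mathbb{V}\setminus A)\times\mathbb{W}$ and its connected components are $C\times\mathbb{W}$ for $C$ a connected component of $\mathbb{V}\setminus A$. Convexity of $C\times\mathbb{W}$ is equivalent to convexity of $C$, so the problem reduces to showing that each such $C$ is convex.

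Fix such a component $C$. On $C$ the distributional Hessian of $f$ vanishes, which forces $f$ to be locally affine on $C$; since $C$ is connected, there is a single affine function $L$ with $f=L$ on $C$. Replacing $f$ by $g:=f-L$, we have a convex function that vanishes identically on the nonempty open set $C$. The heart of the argument is now the claim that such a $g$ must in fact be nonnegative on all of $\mathbb{V}$. To see this, pick $x_{0}\in C$ and a ball $B\subset C$ centered at $x_{0}$. If $g(y)<0$ for some $y\in\mathbb{V}$, choose $\epsilon>0$ small enough that $z:=x_{0}+\epsilon(x_{0}-y)$ lies in $B$, so $g(z)=0$. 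Then $x_{0}=\tfrac{1}{1+\epsilon}z+\tfrac{\epsilon}{1+\epsilon}y$, and convexity gives
\[
0=g(x_{0})\leq\tfrac{1}{1+\epsilon}g(z)+\tfrac{\epsilon}{1+\epsilon}g(y)=\tfrac{\epsilon}{1+\epsilon}g(y)<0,
\]
a contradiction. Hence $g\geq 0$.

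Once $g\geq 0$, the zero set $Z:=\{g=0\}$ is convex: for $a,b\in Z$ and $t\in[0,1]$, convexity yields $0\leq g(ta+(1-t)b)\leq tg(a)+(1-t)g(b)=0$, so the segment $[a,b]$ lies in $Z$. Now $C\subset Z$ and $C$ is open, so $C\subset\mathrm{int}(Z)$. Conversely, on $\mathrm{int}(Z)$ we have $f=L$, hence $dd^\# f=0$, so $\mathrm{int}(Z)\times\mathbb{W}\subset(\mathrm{Supp}\,T)^{c}$; being the interior of a convex set, $\mathrm{int}(Z)$ is convex and therefore connected, and maximality of the component $C$ forces $\mathrm{int}(Z)\subset C$. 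We conclude $C=\mathrm{int}(Z)$, which is convex.

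The main obstacle is the auxiliary fact that a convex function vanishing on a nonempty open set is globally nonnegative; everything else is assembly. The rest of the proof is just combining the $dd^\#$-lemma, the structural observation that $\mathrm{Supp}\,T$ is a product in the $\xi$-direction, and the elementary convex-analytic fact that the zero set of a nonnegative convex function is convex.
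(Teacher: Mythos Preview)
Your proof is correct and shares its opening with the paper's: invoke the $dd^{\#}$-lemma to write $T=dd^{\#}f$ with $f$ convex, and note that $f$ equals a single affine function $L$ on each component $C$ of the complement of the support. From there the two arguments diverge slightly. The paper argues directly on segments: for $p,q\in C$, the one-variable convex function $t\mapsto f((1-t)p+tq)$ agrees with the affine function $L$ on neighbourhoods of both endpoints, hence on the whole segment; thus $f$ is affine on $\mathrm{conv}(C)$, so $\mathrm{conv}(C)\subset(\mathrm{Supp}\,T)^{c}$, and maximality gives $C=\mathrm{conv}(C)$. You instead establish the global inequality $f\ge L$, then identify $C$ with the interior of the convex sublevel set $\{f=L\}$. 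Your route proves a little more along the way (the supporting-hyperplane inequality $f\ge L$) and packages the conclusion via a zero-set argument; the paper's route is shorter and avoids any global statement. Both are standard convex-analysis manoeuvres and neither has a real advantage here.
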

\begin{proof}
By the $dd^{\#}$-lemma we can find a convex function $f$ such that
$T=dd^{\#}f.$ If $A$ is a component of $(SuppT)^{c}$, so that $dd^{\#}f=0$
on $A,$ then $f$ is affine on $A.$ For every pair of points $p,q\in A$
we consider the line segment $I$ connecting the two points. The restriction
of $f$ to $A$ is an affine function and the convexity of $f$ implies
that it must be affine on the whole line segment $I$. Since this
is true for every line segment $I$ connecting two points of $A$
we see that $f$ must be affine on the convex hull of $A$, so that
$A\subset conv(A)\subset SuppT^{c}$. Since $A$ was a component we
must have $A=conv(A)$, that is, $A$ is convex.
\end{proof}

\section{Int\label{sec:Intersection-theory-of}ersection theory of currents}

Let $M_{p}$ denote the space of $(p,p)$- forms on $\mathbb{E}$
whose coefficients are measures, endowed with the following topology:
if $T_{i},T\in M_{p}$ then $T_{i}\rightarrow T$ iff $T_{i}(\alpha)\rightarrow T(\alpha)$
for every $(n-p,n-p)-$form $\alpha$ with compact support and whose
coefficients are continuous functions. Note that $M_{p}\subset\mathcal{D}_{p,p}$
as a set, but the topology of $M_{p}$ is stronger than that induced
by $D_{p,p}$. However, a standard proposition in the setting of currents
with measure coefficients, which carries over to our case, is the
following (cf. \cite{Klimek}):
\begin{prop}
\label{pro:weakconvergenceorder}Let $T_{i},T\in M_{p}.$ Then $T_{i}\rightarrow T$
in $M_{p}$ if and only if \[
T_{i}(\alpha)\rightarrow T(\alpha),\]
 for every compactly supported, smooth $(n-p,n-p)-$form $\alpha$,
and if for every compact subset $L\subset\mathbb{R}^{n}$ we have,
\begin{equation}
\sup_{i}\max_{I,J}|(T_{i})_{IJ}|(L)<+\infty.\label{eq:supmax}\end{equation}

\end{prop}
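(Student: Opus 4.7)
The plan is to prove the two implications separately, viewing each current in $M_p$ through its signed Radon measure coefficients $(T_i)_{IJ}$ and $T_{IJ}$ on $\mathbb{V}$, and using the density of smooth compactly supported forms in the continuous compactly supported ones.

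For the forward direction, the convergence on smooth test forms is immediate, since smooth compactly supported coefficients form a subset of the continuous compactly supported ones. To establish the uniform bound \eqref{eq:supmax}, I would fix a compact $L \subset \mathbb{V}$ together with a multi-index pair $(I,J)$, and test the convergence $T_i\to T$ against forms of the shape $\phi(x)\, dx_{I^{c}}\wedge d\xi_{J^{c}}$ with $\phi \in C_c(\mathbb{V})$; up to sign this reduces to weak-$*$ convergence of the measures $(T_i)_{IJ}$ to $T_{IJ}$ as signed Radon measures. The Banach--Steinhaus theorem, applied to the continuous linear functionals $\phi \mapsto \int \phi\, d(T_i)_{IJ}$ on the Banach space $C_0(L'')$ for a relatively compact open neighbourhood $L''\supset L$ (after multiplying by a smooth cutoff equal to $1$ on $L$), would then yield the claimed local total-variation bound.

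For the backward direction, given $\alpha$ continuous with compact support in some compact $L$, I would fix a slightly larger compact neighbourhood $L' \supset L$ and convolve the coefficients of $\alpha$ with the mollifier $\rho_{\epsilon}$ to produce smooth forms $\alpha^{(\epsilon)}$ supported in $L'$ (for small $\epsilon$) whose coefficients converge uniformly to those of $\alpha$. Writing
\[
T_i(\alpha) - T(\alpha) = T_i(\alpha - \alpha^{(\epsilon)}) + \bigl(T_i(\alpha^{(\epsilon)}) - T(\alpha^{(\epsilon)})\bigr) + T(\alpha^{(\epsilon)} - \alpha),
\]
the middle term tends to $0$ as $i\to\infty$ for each fixed $\epsilon$ by hypothesis, while each outer term is dominated by a constant (depending only on $n$ and $p$) times $\max_{I,J}\|\alpha_{IJ} - \alpha^{(\epsilon)}_{IJ}\|_{\infty}$ times either $\max_{I,J}|(T_i)_{IJ}|(L')$ or $\max_{I,J}|T_{IJ}|(L')$; both of these are uniformly bounded thanks to \eqref{eq:supmax} applied on $L'$. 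A standard $\varepsilon/3$-argument would then close the estimate.

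The main obstacle I anticipate is the forward direction's uniform mass bound. Extracting a local total-variation bound from weak-$*$ convergence of signed Radon measures relies on the Banach--Steinhaus theorem and depends crucially on the assumption that the coefficients of $T_i$ are genuine measures (so they define bounded linear functionals on $C_0(L'')$), rather than only distributions; this is the precise reason we must work in $M_p$ rather than in $\mathcal{D}_{p,p}$. Everything after that amounts to a routine density-plus-triangle-inequality argument.
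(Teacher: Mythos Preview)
Your argument is correct and is essentially the standard proof of this fact. Note, however, that the paper does not actually supply a proof of this proposition: it is stated as a standard result in the theory of currents with measure coefficients and attributed to \cite{Klimek}, so there is no ``paper's own proof'' to compare against. Your Banach--Steinhaus argument for the forward mass bound and the density-plus-$\varepsilon/3$ argument for the backward direction are exactly what one finds in the references.
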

Here, if $\mu$ is a measure $|\mu|$ denotes the total variation
of $\mu.$ Now, let us consider the map given by \[
\Psi_{p}(f_{1},...,f_{p})=dd^{\#}f_{1}\wedge...\wedge dd^{\#}f_{p},\]
where $f_{i}$ is smooth and convex for each index $i$; let us denote
the set of such functions by $K$. We consider $\Psi_{p}$ as a map
from $K^{p}$ to $M_{p}$. Our aim is to show that this map extends,
in a natural way, to a map defined on $p$-tuples of convex functions
(that need not be smooth). By the inclusion $M_{p}\subset\mathcal{D}_{p,p}$,
this extension can be considered as a $(p,p)-$current, which we will
call the intersection product. The scheme to prove this extension
property is the following: first we prove that $\Psi_{p}$ maps bounded
subsets of $K^{p}$ to bounded subsets of $M_{p}$. By the Banach-Alaoglu
theorem, this implies that for every bounded subset $A\subset K^{p}$,
the family $\{\Psi_{p}(x),x\in A\}$ contains a weakly convergent
subsequence. Thus there exists at least one accumulation point of
$\{\Psi_{p}(x),x\in A\}$ in $M_{p}$, and we then show that, in fact,
there exists only one, unique accumulation point. Before we turn to
the details, we need an important property of positive currents:
\begin{prop}
\label{pro:tracemeasureinequality-1}If \[
T=\sum_{|I|=|J|=p}T_{IJ}(\sigma_{p}\cdot dx_{I}\wedge d\xi_{J})\]
 is a symmetric, positive $(p,p)-$current, then each coefficient
$T_{IJ}$ satisfies \[
|\left\langle T_{IJ},\phi\right\rangle |\leq C\cdot\sum_{|I|=p}\left\langle T_{II},\phi\right\rangle ,\]
 for each smooth, non-negative function with compact support, $\phi$.
In particular, $T_{II}$ is a positive measure, and $T_{IJ}$ is a
signed measure, for each multi-indices $I,J$. \end{prop}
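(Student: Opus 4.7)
The strategy is to transport to the current-theoretic setting the elementary matrix inequality $|A_{KL}| \leq \tfrac{1}{2}(A_{KK} + A_{LL})$ valid for a symmetric positive semi-definite matrix $A = (A_{KL})$; concretely, I would test $T$ against cleverly chosen positive test forms that isolate the coefficient (or linear combination of coefficients) one wishes to control. For the diagonal entries, I would pair $T$ with $\alpha_I := \phi \cdot \sigma_{n-p}\, dx_{I^c} \wedge d\xi_{I^c}$ for an arbitrary non-negative $\phi \in C^\infty_c(\mathbb{V})$. By property 1) of Lemma \ref{lem:elementaryproperties}, $\sigma_{n-p}\, dx_{I^c}\wedge d\xi_{I^c} = \bigwedge_{k \in I^c}(dx_k \wedge d\xi_k)$, so $\alpha_I$ is strongly positive. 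Redoing the wedge computation at the core of Proposition \ref{lem:postivedefinite} (which again relies on $\sigma_p\sigma_{n-p}(-1)^{n(n-p)}\sigma_n = 1$), only the term $K = L = I$ survives, producing $\langle T, \alpha_I\rangle = \langle T_{II}, \phi\rangle$. Since $T$, being positive, is in particular weakly positive (dually to Lemma \ref{lem:elementaryproperties}, part 4), the left side is non-negative, and so $T_{II}$ is a positive distribution --- hence a positive Radon measure.

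For the off-diagonal bound, I would run the same computation with $\beta_\pm := \psi(dx_{I^c} \pm dx_{J^c})$ in place of the monomial $\beta_I = \psi\, dx_{I^c}$, where $\psi \in C^\infty_c(\mathbb{V})$. The coefficients $\beta_K$ in $\beta_\pm = \sum_K \beta_K\, dx_{K^c}$ are $\beta_K = \psi(\delta_{KI} \pm \delta_{KJ})$, and Proposition \ref{lem:postivedefinite} then yields
\[
T \wedge \sigma_{n-p}\,\beta_\pm \wedge J(\beta_\pm) \;=\; \psi^2\bigl(T_{II} \pm 2 T_{IJ} + T_{JJ}\bigr)\,\omega_n,
\]
the factor of $2$ stemming from $T_{IJ} = T_{JI}$. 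The positivity of $T$ on test forms of the shape $\sigma_{n-p}\beta \wedge J(\beta)$ then gives $\langle T_{II} + T_{JJ} \pm 2 T_{IJ},\, \psi^2\rangle \geq 0$ for every $\psi \in C^\infty_c(\mathbb{V})$. To pass from squares $\psi^2$ to arbitrary non-negative test functions $\phi$, I would use the regularization $\psi_\epsilon := \sqrt{\phi + \epsilon \chi^2}$, with $\chi$ a smooth non-negative cutoff equal to $1$ on a neighborhood of $\mathrm{supp}\,\phi$: on $\{\chi > 0\}$ the radicand is strictly positive so $\psi_\epsilon$ is smooth there, while on $\{\chi = 0\}$ the hypothesis $\phi = 0$ forces $\psi_\epsilon = \sqrt{\epsilon}\,\chi$, so the two formulas glue into a globally smooth, compactly supported function with $\psi_\epsilon^2 = \phi + \epsilon\chi^2 \to \phi$. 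Letting $\epsilon \to 0$ and combining the $\pm$ inequalities delivers the claimed estimate
\[
2\bigl|\langle T_{IJ},\, \phi\rangle\bigr| \;\leq\; \langle T_{II},\, \phi\rangle + \langle T_{JJ},\, \phi\rangle \;\leq\; \sum_{|K| = p}\langle T_{KK},\, \phi\rangle.
\]

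Since each $T_{KK}$ has already been shown to be a positive Radon measure, the above estimate reads $|\langle T_{IJ}, \phi\rangle| \leq C\,\mu(\mathrm{supp}\,\phi)\,\|\phi\|_\infty$ for $\phi \geq 0$ (with $\mu := \sum_K T_{KK}$); the standard splitting $\phi = \tfrac{1}{2}(M\chi + \phi) - \tfrac{1}{2}(M\chi - \phi)$, where $M = \|\phi\|_\infty$ and $\chi$ is a non-negative cutoff, extends the estimate to arbitrary test $\phi$, after which the Riesz representation theorem upgrades $T_{IJ}$ to a signed Radon measure. The most delicate point I expect is the sign-and-index bookkeeping involved in establishing the wedge identity $T\wedge \sigma_{n-p}\beta\wedge J(\beta) = \bigl(\sum_{K,L} T_{KL}\beta_K\beta_L\bigr)\,\omega_n$; once this is in hand, the combinatorics with $\beta_\pm$ and the regularization to arbitrary $\phi \geq 0$ are routine.
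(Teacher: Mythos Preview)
Your proposal is correct and follows essentially the same strategy as the paper: both arguments extract from the positivity of $T$ the fact that the array $\bigl(\langle T_{KL},\phi\rangle\bigr)_{K,L}$ is positive semi-definite for each fixed $\phi\geq 0$, and then bound the off-diagonal entries by the diagonal ones. The paper packages this by defining a bilinear form $g(e_I,e_J)=\langle T,\sigma_{n-p}\phi\,dx_I\wedge d\xi_J\rangle$ and invoking Cauchy--Schwarz plus AM--GM; you instead test against $\beta_\pm=\psi(dx_{I^c}\pm dx_{J^c})$, which is the standard polarization form of the same inequality.

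The one place where you are in fact \emph{more} careful than the paper is the passage from $\psi^2$ to an arbitrary non-negative $\phi$. The paper writes $\phi\,\alpha\wedge J(\alpha)=(\sqrt{\phi}\,\alpha)\wedge J(\sqrt{\phi}\,\alpha)$ and applies positivity directly, tacitly treating $\sqrt{\phi}$ as smooth; your regularization $\psi_\epsilon=\sqrt{\phi+\epsilon\chi^2}$ cleanly sidesteps this. Your final paragraph deducing that $T_{IJ}$ is a signed Radon measure via Riesz is also spelled out more explicitly than in the paper, which simply asserts it.
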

\begin{proof}
The argument is clearest when $T$ is smooth, so let us first assume
this is the case. By Proposition \ref{lem:postivedefinite} the $p^{2}\times p^{2}$
matrix $(T_{IJ})$, is positive definite and symmetric at every point
of $\mathbb{R}^{n}$, and thus defines a metric $g$ on $\mathbb{R}^{p^{2}}$.
Let $(e_{I})_{|I|=p}$ be an orthonormal basis of $\mathbb{R}^{p^{2}}$
such that $g(e_{I},e_{J})=T_{IJ}$. Then the Cauchy-Schwartz inequality
gives us \begin{equation}
T_{IJ}=g(e_{I},e_{J})\leq\sqrt{g(e_{I},e_{I})}\cdot\sqrt{g(e_{J},e_{J})}=\sqrt{T_{II}}\cdot\sqrt{T_{JJ}}\leq\frac{T_{II}+T_{JJ}}{2},\label{eq:TijleqTiiplusTjj}\end{equation}
 where we used the inequality between geometric and arithmetic mean
in the last inequality. By exchanging $e_{I}$ for $-e_{I}$ in \eqref{eq:TijleqTiiplusTjj}
we have established the inequality \begin{equation}
|T_{IJ}(x)|\leq C\cdot\sum_{|I|=p}T_{II}(x)\label{eq:tracemeasureinequality}\end{equation}
for some constant $C>0$, which proves the proposition when $T$ is
smooth. If $T$ is not smooth, we can still define an associated metric
as follows: for each smooth, non-negative function with compact support,
$\phi$, we define \[
g(e_{I,}e_{J})=\left\langle T,\sigma_{n-p}\phi dx_{I}\wedge d\xi_{J}\right\rangle ,\]
and extend by linearity. Then $g$ is a positive definite, symmetric
form on $\mathbb{R}^{p^{2}}$, since, if $v=\sum_{I}v_{I}e_{I}\in\mathbb{R}^{p^{2}}$,
then \[
g(v,v)=\left\langle T,\sigma_{n-p}\phi(\sum_{I}v_{I}dx_{I})\wedge J(\sum_{I}v_{I}dx_{I})\right\rangle =\left\langle T,\sigma_{n-p}\tilde{v}\wedge J(\tilde{v)}\right\rangle \geq0,\]
 where $\tilde{v}=\sqrt{\phi}\sum_{I}v_{I}dx_{I}$. Thus, $g(e_{I},e_{I})=\left\langle T_{II},\phi\right\rangle \geq0$,
which implies that $T_{II}$ is a positive measure. Moreover, $g(e_{I},e_{J})=\left\langle T_{IJ},\phi\right\rangle $,
and by the argument used above, \[
|\left\langle T_{IJ},\phi\right\rangle |\leq C\cdot\sum_{|I|=p}\left\langle T_{II},\phi\right\rangle ,\]
 for some constant $C>0$. The proposition follows.
\end{proof}
A subset $A\subset K^{p}$ is \emph{bounded} if for every compact
subset $L$, and every element $(f_{1},...,f_{p})\in A$, we have
that \[
\max_{i}\sup_{x\in L}|f_{i}(x)|\leq C_{L},\]
 for some constant $C_{L}$ (these norms, indexed by $L$, define
the topology of $K^{p}$).
\begin{prop}
\label{pro:nirenberg}If $A\subset K^{p}$ is bounded, then for each
compact set $L\subset\mathbb{R}^{n}$, there exists a constant $D_{L}$
such that if \[
T=dd^{\#}f_{1}\wedge...\wedge dd^{\#}f_{p}\]
 where $(f_{1},...,f_{p})\in A,$ the coefficients of $T$ satisfy
\[
|T_{IJ}|(L):=\int_{L}|T_{IJ}|\leq D_{L}.\]
\end{prop}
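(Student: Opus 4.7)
The plan is to prove the bound by induction on $p$, following the strategy of the classical Chern--Levine--Nirenberg inequality. First, since Proposition \ref{pro:tracemeasureinequality-1} controls any coefficient $|T_{IJ}|(L)$ in terms of $\sum_{|I|=p} T_{II}(L)$, and since each $T_{II}$ is a positive measure, it suffices to bound the \emph{trace measure}
\[
\sigma_{T} \;=\; c_{n,p}\, T\wedge \omega_{n-p}
\]
(a positive $(n,n)$-form whose integral on $L$ is a positive linear combination of the $T_{II}(L)$'s) against a fixed cutoff $\chi\in C_{c}^{\infty}(\mathbb{V})$ with $\chi\equiv 1$ on $L$ and $\mathrm{supp}\,\chi\subset L'$, where $L'$ is a compact neighbourhood of $L$.

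Next, write $T=dd^{\#}f_{1}\wedge T'$ with $T'=dd^{\#}f_{2}\wedge\cdots\wedge dd^{\#}f_{p}$, which is strongly positive and smooth by part 2) of Lemma \ref{lem:elementaryproperties}. Applying Stokes' theorem (Proposition \ref{pro:stokes}) twice, transferring both $d$ and $d^{\#}$ from $f_{1}$ onto $\chi$ (all boundary terms vanish since $\chi$ has compact support), one obtains
\[
\int \chi\, T\wedge \omega_{n-p} \;=\; \int f_{1}\, dd^{\#}\chi \wedge T'\wedge \omega_{n-p}.
\]
Now choose a second cutoff $\chi'\in C_{c}^{\infty}(\mathbb{V})$ with $\chi'\equiv 1$ on $L'$ and $\mathrm{supp}\,\chi'\subset L''$. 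Since $dd^{\#}\chi$ is a bounded smooth symmetric $(1,1)$-form supported in $L'$, there exists a constant $M>0$ (depending only on $\chi$, hence only on $L$) such that the two symmetric $(1,1)$-forms
\[
M\chi'\,\omega\;\pm\;dd^{\#}\chi
\]
are both strongly positive. Combined with the boundedness hypothesis, which gives $|f_{1}|\le C_{L'}$ on $L'$, the positivity of $T'\wedge \omega_{n-p}$ yields
\[
\left|\int f_{1}\, dd^{\#}\chi\wedge T'\wedge \omega_{n-p}\right|
\;\le\; 2MC_{L'}\int \chi'\,\omega\wedge T'\wedge \omega_{n-p}
\;=\; C'_{L''}\int \chi'\, T'\wedge \omega_{n-p+1},
\]
where we have used $\omega\wedge\omega_{n-p}=(n-p+1)\omega_{n-p+1}$.

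The right-hand side is a constant times the trace measure of $T'$ (a $(p-1,p-1)$-current of the same form with $p-1$ convex functions) evaluated against $\chi'$. By the inductive hypothesis applied to $T'$ on the compact $L''$, this quantity is bounded by a constant $D_{L''}$ depending only on $L''$ and the bounds on $A$. The base case $p=0$ is trivial. This closes the induction and proves the estimate.

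The main technical point is the splitting $dd^{\#}\chi = (M\chi'\omega + dd^{\#}\chi) - M\chi'\omega$ into strongly positive pieces, which allows the bounded (but a priori not positive) $(1,1)$-form $dd^{\#}\chi$ to be absorbed against the strongly positive current $T'\wedge \omega_{n-p}$; without this trick, there would be no way to exploit positivity in the integration by parts. Everything else is straightforward because each $f_{i}\in K$ is smooth, so $T$ is a smooth form and there are no regularity issues with the application of Stokes' theorem.
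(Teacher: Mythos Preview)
Your proof is correct and follows the same inductive Chern--Levine--Nirenberg scheme as the paper: reduce to diagonal coefficients via Proposition~\ref{pro:tracemeasureinequality-1}, integrate by parts to move $dd^{\#}$ from $f_{1}$ onto a cutoff $\chi$, and invoke the inductive hypothesis on $T'=dd^{\#}f_{2}\wedge\cdots\wedge dd^{\#}f_{p}$.

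The only difference is in how the term $\int f_{1}\,dd^{\#}\chi\wedge T'\wedge(\cdot)$ is estimated. You use the standard pluripotential trick of dominating $dd^{\#}\chi$ by $M\chi'\omega$ via the splitting into strongly positive pieces, so that positivity of $T'\wedge\omega_{n-p}$ can be invoked; this costs you a second cutoff $\chi'$ and an extra enlargement $L\subset L'\subset L''$. The paper instead works coefficient-by-coefficient: it simply expands $dd^{\#}\chi\wedge T'\wedge\widehat{dx_{I}}\wedge\widehat{d\xi_{I}}$ and bounds each term by $\sup|\partial^{2}\chi|\cdot|S_{KL}|(\mathrm{supp}\,\chi)$, the latter being controlled directly by the inductive hypothesis on the measure coefficients of $T'$. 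This is slightly more elementary (no positivity splitting, no second cutoff), while your version is closer to the classical complex-analytic argument. One minor slip: the bound $|f_{1}|\le C_{L'}$ should be $|f_{1}|\le C_{L''}$, since the support of $M\chi'\omega\pm dd^{\#}\chi$ lies in $L''$, not $L'$.
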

\begin{proof}
By the previous proposition, we need only to prove that $T_{II}\leq\tilde{D_{L}}$,
for every $I$. Assume first that $p=1$. Fix a set compact set $L$
and let $f\in K$. Moreover, let $\chi$ be a smooth function equal
to $1$ on $L$ and 0 outside a small neighbourhood of $L$. Then,
since $\partial_{ii}^{2}f\geq0,$ by partial integration we get, \[
|\int_{L}\partial_{ii}^{2}f|\leq|\int_{\mathbb{E}}\chi(x)dd^{\#}f(x)\wedge\widehat{dx_{i}}\wedge\widehat{d\xi_{i}}|=|\int_{Supp\chi\times\mathbb{R}^{n}}f(x)dd^{\#}\chi(x)\wedge\widehat{dx_{i}}\wedge\widehat{d\xi_{i}}|\leq\]
\[
\leq\sup_{x\in Supp\chi}|f(x)|\cdot C_{\chi},\]
using that $\chi$ has uniformly bounded second-order partial derivatives
on every compact subset. Thus, there exists a constant $\tilde{D}_{L}$
such that \[
|T_{ii}|(L)\leq\tilde{D}_{L}\]
for each $i$, proving the case $p=1.$ Assume now that we have proven
the proposition for $p=k-1.$ We want to show that it holds for $p=k$
as well. To this end, let $S=dd^{\#}f_{2}\wedge...\wedge dd^{\#}f_{k}$,
and fix a multi-index $I$ of length $k$. Then, using the same notation
as in the case $p=1$, \[
|\int_{L}(dd^{\#}f_{1}\wedge S)_{II}dV|\leq|\int_{\mathbb{E}}\chi(x)dd^{\#}f_{1}(x)\wedge S\wedge\widehat{dx_{I}}\wedge\widehat{d\xi_{I}}|=\]
\[
=|\int_{Supp\chi\times\mathbb{R}^{n}}f_{1}(x)dd^{\#}\chi(x)\wedge S\wedge\widehat{dx_{I}}\wedge\widehat{d\xi_{J}}|.\]
 By the induction hypothesis, $S$ has coefficients satisfying $|S_{IJ}|(Supp\chi)\leq D_{Supp\chi}$.
Thus\[
|\int_{L}(dd^{\#}f_{1}\wedge S)_{II}dV|\leq\sup_{x\in Supp\chi}|f_{1}(x)|\cdot C_{\chi},\]
and we are done. \end{proof}
\begin{prop}
\label{pro:limitisunique}Let $f_{1},...,f_{p}$ be convex (but not
necessarily smooth) functions, and let, for each $i$, $\{f_{i}^{k}\}_{k}$
be a sequence of smooth, convex functions converging uniformly to
$f_{i}$ on compact subsets. Then the sequence $\{dd^{\#}f_{1}^{k}\wedge...\wedge dd^{\#}f_{p}^{k}\}_{k}\subset M_{p}$
contains a convergent subsequence. If $\{g_{i}^{k}\}_{k}$ is another
sequence of smooth convex functions converging uniformly on compact
subsets, to $f_{i}$ for each $i$, then, if the limits of $dd^{\#}f_{1}^{k}\wedge...\wedge dd^{\#}f_{p}^{k}$
and $dd^{\#}g_{1}^{k}\wedge...\wedge dd^{\#}g_{p}^{k}$ exist, they
must be equal. \end{prop}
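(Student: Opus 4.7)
Since each $\{f_i^k\}_k$ converges uniformly on compact sets, the family $\{(f_1^k,\ldots,f_p^k)\}_k$ is bounded in $K^p$, so Proposition \ref{pro:nirenberg} supplies, for every compact $L\subset\mathbb{V}$, a uniform bound $\sup_k\max_{I,J}|(T_k)_{IJ}|(L)\le D_L$ on the coefficient measures of $T_k := dd^\# f_1^k\wedge\cdots\wedge dd^\# f_p^k$. A diagonal extraction combined with weak-$*$ compactness of bounded sets of Radon measures yields a subsequence along which every coefficient measure converges; by Proposition \ref{pro:weakconvergenceorder} this subsequence converges in $M_p$.

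\textbf{Uniqueness, by induction on $p$.} For $p=1$, uniform convergence $f_1^k\to f_1$ forces $dd^\# f_1^k\to dd^\# f_1$ in the sense of currents, and combined with the coefficient bound above this upgrades to convergence in $M_1$ via Proposition \ref{pro:weakconvergenceorder}; the limit depends only on $f_1$. For the induction step, assume the proposition for $p-1$, and extract a common subsequence (possible by the existence part) along which both $U_k := dd^\# f_2^k\wedge\cdots\wedge dd^\# f_p^k$ and $V_k := dd^\# g_2^k\wedge\cdots\wedge dd^\# g_p^k$ converge in $M_{p-1}$ to limits which coincide, by the inductive hypothesis, in a single element $U$. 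Since $U_k$ and $V_k$ are $d$- and $d^\#$-closed (being wedge products of $dd^\#$-exact forms), two applications of Stokes' formula (Proposition \ref{pro:stokes}) yield, for every smooth compactly supported $(n-p,n-p)$-form $\alpha$,
\[
\langle dd^\# f_1^k\wedge U_k,\,\alpha\rangle \;=\; \langle U_k,\,f_1^k\,dd^\#\alpha\rangle,
\]
and the analogous identity with $g_1^k$ and $V_k$ in place of $f_1^k$ and $U_k$.

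\textbf{Passage to the limit.} Decomposing $\langle U_k,f_1^k\,dd^\#\alpha\rangle = \langle U_k,(f_1^k-f_1)\,dd^\#\alpha\rangle + \langle U_k,f_1\,dd^\#\alpha\rangle$, the first piece is controlled by $\|f_1^k-f_1\|_{L^\infty(\mathrm{supp}\,\alpha)}$ times the (uniformly bounded) total variation of the coefficients of $U_k$ on $\mathrm{supp}\,\alpha$, hence tends to $0$; the second tends to $\langle U,f_1\,dd^\#\alpha\rangle$ because $f_1\,dd^\#\alpha$ is a continuous compactly supported test form, and $M_{p-1}$-convergence is tested against exactly such forms. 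The identical calculation for $dd^\# g_1^k\wedge V_k$ produces the same limit $\langle U,f_1\,dd^\#\alpha\rangle$, so the limits of the two full products (whenever they exist) must agree, completing the induction. The main obstacle is precisely this mixed-convergence step: uniform convergence of $f_1^k$ cannot be combined blindly with merely distributional convergence of $U_k$, and it is the finer $M_{p-1}$-convergence against \emph{continuous} (rather than only smooth) test forms, guaranteed by Proposition \ref{pro:weakconvergenceorder}, that legitimises the passage to the limit.
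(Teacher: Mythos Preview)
Your argument is correct and follows essentially the paper's approach: weak-$*$ compactness via the mass bounds of Proposition~\ref{pro:nirenberg} for existence, then induction on $p$ with integration by parts for uniqueness. The only difference is organizational: in the inductive step the paper compares $dd^{\#}f_1^k$ and $dd^{\#}g_1^k$ against the \emph{same} $S^k=dd^{\#}f_2^k\wedge\cdots\wedge dd^{\#}f_p^k$, using only the uniform mass bound on $S^k$ (and then implicitly iterates to replace the remaining factors one at a time), whereas you pass both products to the common value $\langle U,\,f_1\,dd^{\#}\alpha\rangle$, which requires the finer $M_{p-1}$-convergence you correctly invoke.
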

\begin{proof}
Let $A=\{(f_{1}^{k},...,f_{p}^{k}),k\geq1\}$. Obviously, the set
$A$ is bounded. Using Proposition \ref{pro:nirenberg}, we see that
for each compactly supported $(n-p,n-p)-$form $\alpha$ with continuous
coefficients, there exists a constant $D_{\alpha}$, for which \[
(dd^{\#}f_{1}^{k}\wedge...\wedge dd^{\#}f_{p}^{k})(\alpha)\leq D_{\alpha}\cdot\max_{IJ}\sup_{x\in Supp\alpha}|\alpha_{IJ}(x)|.\]
Thus, applying the Banach-Alaoglu theorem, we see that the sequence
$\{dd^{\#}f_{1}^{k}\wedge...\wedge dd^{\#}f_{p}^{k}\}_{k}$ contains
a convergent subsequence in $M_{p}$, as desired. To prove the second
statement, we first assume that $p=1$ and let $\alpha$ be a smooth,
compactly supported $(n-1,n-1)-$form . Then, \[
|\int_{\mathbb{E}}(dd^{\#}f_{1}^{k}-dd^{\#}g_{1}^{k})\wedge\alpha|=|\int_{\mathbb{E}}(f_{1}^{k}-g_{1}^{k})\wedge dd^{\#}\alpha|\leq\sup_{x\in Supp\alpha}|f_{1}^{k}-g_{1}^{k}|\cdot C_{\alpha}\]
which tends to $0$ as $k\rightarrow\infty.$ This proves that the
limit is equal in $\mathcal{D}^{1,1}$. However, by Proposition \ref{pro:nirenberg},
both of the forms $dd^{\#}f_{1}^{k}$ and $dd^{\#}g_{1}^{k}$ satisfy
$\eqref{eq:supmax}$, and so, by Proposition \ref{pro:weakconvergenceorder},
$dd^{\#}f_{1}^{k}$ and $dd^{\#}g_{1}^{k}$ converge to the same limit
in $M_{p}$ as well. Now, assume the statement is proved for $p=m-1,$
and let $S^{k}=dd^{\#}f_{2}^{k}\wedge...\wedge dd^{\#}f_{p}^{k}$.
Then, since $S^{k}$ is closed, \[
\int_{\mathbb{E}}(dd^{\#}f_{1}^{k}-dd^{\#}g_{1}^{k})\wedge\alpha\wedge S^{k}=\int_{\mathbb{E}}(f_{1}^{k}-g_{1}^{k})\wedge dd^{\#}\alpha\wedge S^{k},\]
where $\alpha$ is a test-form of degree $(n-p,n-p)$. Moreover, by
Proposition \ref{pro:nirenberg}, the coefficients of $S^{k}$ satisfy
\[
|S_{IJ}^{k}|(Supp\alpha)\leq C_{\alpha}.\]
Thus there exists a constant $D_{\alpha}$ for which, \[
|\int_{\mathbb{E}}(dd^{\#}f_{1}^{k}-dd^{\#}g_{1}^{k})\wedge\alpha\wedge S^{k}|\leq D_{\alpha}\sup_{x\in Supp\alpha}|f_{1}^{k}-g_{1}^{k}|.\]
 This last expression thus tends to 0 as $k\rightarrow\infty.$ Again,
by Proposition \ref{pro:nirenberg} and Proposition \ref{pro:weakconvergenceorder},
we are done. 
\end{proof}
We can now define the intersection product $dd^{\#}f_{1}\wedge...\wedge dd^{\#}f_{p}$,
for $f_{1},...,f_{p}$ convex functions on $\mathbb{R}^{n}$, by using
the continuity of $\Psi_{p}$: it is well known that for a convex
function $f$ one can find a sequence of smooth, convex functions
$f^{k}$ which is monotone in $k$, and which converge to $f$ pointwise.
By Dini's theorem (for its statement, see the discussion after equation
\eqref{eq:homogenizationinequality}), $f^{k}$ converges uniformly
to $f$ on every compact subset of $\mathbb{R}^{n}$. Applying this
for each function $f_{i}$, by using Proposition \ref{pro:limitisunique}
we can define (after possibly choosing a subsequence), \[
dd^{\#}f_{1}\wedge...\wedge dd^{\#}f_{p}=\lim_{k\rightarrow\infty}dd^{\#}f_{1}^{k}\wedge...\wedge dd^{\#}f_{p}^{k},\]
and the definition does not depend on the way we approximate the functions
$f_{i}$ (or which subsequence we choose). 
\begin{defn}
A \emph{strongly positive} $(p,p)-$current is a current of the form
$dd^{\#}f_{1}\wedge...\wedge dd^{\#}f_{p}$. 
\end{defn}
Note that such currents are automatically closed. We collect some
immediate observations about such currents in a proposition: 
\begin{prop}
\textup{The intersection product $dd^{\#}f_{1}\wedge...\wedge dd^{\#}f_{p}$
is weakly positive, it is symmetric in its arguments, and its coefficients
are measures. Moreover, it satisfies the relation }

\begin{equation}
Supp(dd^{\#}f_{1}\wedge...\wedge dd^{\#}f_{p})\subset Supp(dd^{\#}f_{1})\cap...\cap Supp(dd^{\#}f_{p}).\label{eq:supp_of_intersection_subset_intersection_of_supp}\end{equation}

\end{prop}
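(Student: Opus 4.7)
The plan is to prove each of the four assertions by the same approximation scheme: choose smooth convex $f_i^k$ converging uniformly on compact subsets to $f_i$, verify the corresponding statement at the smooth level, and pass to the limit in $M_p$ using the uniqueness clause of Proposition~\ref{pro:limitisunique}. Throughout, write $T:=dd^{\#}f_1\wedge\dots\wedge dd^{\#}f_p$ and $T^k:=dd^{\#}f_1^k\wedge\dots\wedge dd^{\#}f_p^k$.

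For weak positivity: the coefficient matrix of $dd^{\#}f_i^k$ is the Hessian of $f_i^k$, which is positive semidefinite, so by Proposition~\ref{lem:postivedefinite} together with the $(1,1)$-case of the lemma between Propositions~\ref{lem:postivedefinite} and~\ref{pro:positiverestriction}, each $dd^{\#}f_i^k$ is a strongly positive $(1,1)$-form. Their wedge $T^k$ is then strongly positive by definition, so $\langle T^k,\alpha\rangle\geq 0$ for every smooth strongly positive test form $\alpha$ with compact support; by the very definition of convergence in $M_p$ this inequality passes to the limit and yields the weak positivity of $T$. Symmetry in the arguments is immediate on the smooth level, since the $(1,1)$-forms $dd^{\#}f_i^k$ have even total degree and therefore commute under wedge product; by the uniqueness clause of Proposition~\ref{pro:limitisunique}, this symmetry descends to $T$. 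That the coefficients of $T$ are measures needs no separate proof: $T$ was constructed as an element of $M_p$, whose elements by definition have measure coefficients.

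The main obstacle is the support inclusion, since weak convergence alone carries no information about supports. By the symmetry just established, it suffices to prove $Supp(T)\subset Supp(dd^{\#}f_1)$. Put $U:=\mathbb{R}^n\setminus Supp(dd^{\#}f_1)$ and let $\alpha$ be a smooth test $(n-p,n-p)$-form with compact support in $U$. Since the connected components of $U$ are open and cover the compact set $Supp(\alpha)$, a partition of unity reduces us to the case where $Supp(\alpha)$ lies in a single component $A$ of $U$; by Proposition~\ref{pro:suppconvex} and its proof, the restriction of $f_1$ to $A$ coincides with an affine function $\ell$. Now approximate $f_1$ by its standard mollification $f_1^k:=f_1\star\rho_{1/k}$, and choose any smooth convex approximations $f_i^k\to f_i$ for $i\geq 2$. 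The key observation is that, since $\rho$ is radial, the convolution of an affine function with $\rho_{1/k}$ returns that same affine function; because $Supp(\alpha)$ sits at positive distance from the complement of $A$, for all sufficiently large $k$ the $1/k$-neighborhood of $Supp(\alpha)$ remains in $A$, so $f_1^k\equiv\ell$ and hence $dd^{\#}f_1^k\equiv 0$ on a neighborhood of $Supp(\alpha)$. Consequently $\langle T^k,\alpha\rangle=0$ for all large $k$, and passing to the limit gives $\langle T,\alpha\rangle=0$. Thus $T$ vanishes on $U$, which proves~\eqref{eq:supp_of_intersection_subset_intersection_of_supp}.
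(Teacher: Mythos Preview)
Your argument is correct. The paper itself provides no proof for this proposition, introducing it only as ``some immediate observations'' following the construction of the intersection product; your write-up supplies the details the paper omits. The first three assertions are indeed routine, and you handle them cleanly. For the support inclusion, your mollification argument is the right way to make the ``immediate'' claim precise: the key point---that weak convergence does not by itself control supports, so one must choose the approximation of $f_1$ to be \emph{exactly} affine (not merely close to affine) near $Supp(\alpha)$---is well identified, and your use of the radiality of $\rho$ together with the local affinity of $f_1$ on components of $(Supp\,dd^{\#}f_1)^c$ (Proposition~\ref{pro:suppconvex}) is the natural mechanism. One small remark: strictly speaking Proposition~\ref{pro:limitisunique} only furnishes a convergent \emph{subsequence}, but since every convergent subsequence has the same limit $T$, and $\langle T^k,\alpha\rangle=0$ for all large $k$, the conclusion $\langle T,\alpha\rangle=0$ follows regardless.
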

We also have the following stability property:
\begin{prop}
\label{pro:intersectionstability2}Let $f,g_{1},...,g_{p}$ be convex
functions, where $p<n$. If $\{f_{\epsilon}\}$ is a family of continuous
functions converging pointwise to $f,$ for which $\sup_{\epsilon}\sup_{x\in K}f_{\epsilon}(x)$
is bounded for every compact set $K\subset\mathbb{V}$, then\[
\lim_{\epsilon\rightarrow0}dd^{\#}f_{\epsilon}\wedge dd^{\#}g_{1}\wedge...\wedge dd^{\#}g_{p}=dd^{\#}f\wedge dd^{\#}g_{1}\wedge...\wedge dd^{\#}g_{p}.\]
\end{prop}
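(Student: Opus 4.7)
\textbf{The plan} is to transfer all $dd^\#$'s off of $f_\epsilon$ onto a compactly supported test form via integration by parts, thereby reducing everything to a scalar integral against a fixed signed measure, and then to pass to the limit by Lebesgue dominated convergence. Write $T := dd^\# g_1 \wedge \cdots \wedge dd^\# g_p$. By the construction immediately preceding the statement, $T$ is a well-defined closed current whose coefficients are signed measures by Proposition \ref{pro:tracemeasureinequality-1}, with total variation finite on compacts by Proposition \ref{pro:nirenberg}.

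\textbf{Step 1 (integration by parts).} For a smooth, compactly supported test form $\alpha$ of bidegree $(n-p-1,n-p-1)$, I would establish the identity
\[
\langle dd^\# f_\epsilon \wedge T,\, \alpha\rangle \;=\; \langle T,\, f_\epsilon\, dd^\# \alpha\rangle,
\]
and the analogous identity with $f$ in place of $f_\epsilon$. When $f_\epsilon$ is smooth this is two applications of Stokes' formula (Proposition \ref{pro:stokes}) using that $T$ is both $d$- and $d^\#$-closed and that $\alpha$ has compact support, so no boundary terms appear. For merely continuous $f_\epsilon$, I would approximate $f_\epsilon$ by convolutions $f_\epsilon \star \rho_\delta$ and $T$ by the smooth intersections $dd^\# g_1^k \wedge \cdots \wedge dd^\# g_p^k$ used to define the intersection product, apply the smooth identity, and let $\delta\to 0$ and $k\to\infty$; the right-hand side is already well-defined for continuous $f_\epsilon$ as an integral of a continuous function against the smooth, compactly supported coefficients of $dd^\#\alpha$ and the signed-measure coefficients of $T$, and in fact serves as the definition of the left-hand side in that case.

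\textbf{Step 2 (dominated convergence).} With the identity in hand, it suffices to show that $\langle T,\, f_\epsilon\, dd^\#\alpha\rangle \to \langle T,\, f\, dd^\#\alpha\rangle$. Writing this out, it is a finite sum of scalar integrals of the form $\int f_\epsilon\, \varphi\, dT_{IJ}$ with $\varphi$ smooth and compactly supported and $|T_{IJ}|$ a finite positive measure on $\mathrm{Supp}\,\varphi$. The hypothesis $f_\epsilon \to f$ pointwise together with the locally uniform upper bound on $f_\epsilon$ (combined with the continuity of the convex limit $f$ to provide a corresponding local lower bound) supplies an integrable dominating function, so dominated convergence yields the limit term-by-term. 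To upgrade from convergence in $\mathcal{D}'$ to convergence in $M_p$ as defined at the start of this section, I would invoke Proposition \ref{pro:weakconvergenceorder}: the required uniform mass estimate \eqref{eq:supmax} follows from a further application of Proposition \ref{pro:nirenberg} to smooth approximations of $f_\epsilon$ inside the integration-by-parts identity.

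\textbf{The main obstacle} is Step 1, namely giving a rigorous meaning to $dd^\# f_\epsilon \wedge T$ when $f_\epsilon$ is only continuous and $T$ carries only measure-valued coefficients; this is the analogue of the Bedford--Taylor definition of the Monge--Amp\`ere operator on continuous plurisubharmonic functions. The decisive technical input is Proposition \ref{pro:nirenberg}, which plays the role here of the Chern--Levine--Nirenberg inequality and supplies the local mass bounds that keep the double regularization controlled.
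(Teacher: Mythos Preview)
Your approach is essentially the same as the paper's: move $dd^\#$ off $f_\epsilon$ onto the test form by integration by parts, then apply dominated convergence against the signed-measure current $T\wedge dd^\#\alpha$. The paper's proof is in fact a one-paragraph version of exactly this, writing $\langle dd^\#f_\epsilon\wedge T,\alpha\rangle=\int f_\epsilon\,T\wedge dd^\#\alpha$ and invoking dominated convergence directly. Your write-up is more careful on two points the paper glosses over: you make explicit that the left-hand side must be \emph{defined} via the right-hand side when $f_\epsilon$ is merely continuous, and you spell out the double regularization (of $f_\epsilon$ and of $T$) needed to justify the integration-by-parts identity.

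One small gap to flag: your parenthetical ``combined with the continuity of the convex limit $f$ to provide a corresponding local lower bound'' does not actually follow from the stated hypotheses. Pointwise convergence $f_\epsilon\to f$ together with a uniform \emph{upper} bound on $f_\epsilon$ does not by itself yield a uniform lower bound on compacts, and the statement only assumes the one-sided bound $\sup_\epsilon\sup_K f_\epsilon<\infty$. The paper's proof has the same lacuna (it even calls $T\wedge dd^\#\alpha$ a ``positive measure'', which is not literally true for general $\alpha$). In practice the proposition is only applied in the paper with $f_\epsilon$ convex (Proposition~\ref{pro:intersectionstability}), where local two-sided bounds are automatic, so the issue is harmless for the intended use; but as written your justification of the dominating function is not complete.
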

\begin{proof}
Since the current $dd^{\#}g_{1}\wedge...\wedge dd^{\#}g_{p}$ has
measure coefficients when written in coordinates, we see that if $\alpha$
is a compactly supported, smooth $(n-p-1,n-p-1)-$form, the $(n,n)$-current
$dd^{\#}g_{1}\wedge...\wedge dd^{\#}g_{p}\wedge dd^{\#}\alpha$ can
be represented by a positive measure on $\mathbb{R}^{n}$ with compact
support. By the dominated convergence theorem, \[
\lim_{\epsilon\rightarrow0}\left\langle dd^{\#}f_{\epsilon}\wedge dd^{\#}g_{1}\wedge...\wedge dd^{\#}g_{p},\alpha\right\rangle =\lim_{\epsilon\rightarrow0}\int_{\mathbb{E}}f_{\epsilon}\wedge dd^{\#}g_{1}\wedge...\wedge dd^{\#}g_{p}\wedge dd^{\#}\alpha=\]

\[
=\int_{\mathbb{E}}f\wedge dd^{\#}g_{1}\wedge...\wedge dd^{\#}g_{p}\wedge dd^{\#}\alpha=\left\langle dd^{\#}f\wedge dd^{\#}g_{1}\wedge...\wedge dd^{\#}g_{p},\alpha\right\rangle ,\]
which proves the claim.\end{proof}
\begin{prop}
\label{pro:intersectionstability}If $f^{1},...,f^{p}$ are convex
functions, and for each $i\in\{1,...,p\}$ there is a family of convex
functions $\{f_{\epsilon_{i}}^{i}\}_{\epsilon_{i}>0}$ such that \[
\lim_{\epsilon_{i}\rightarrow0}f_{\epsilon_{i}}^{i}(x)=f^{i}(x),\]
for every $x\in\mathbb{V}$, and which satisfiy that $\sup_{\epsilon}\sup_{x\in K}f_{\epsilon_{i}}^{i}(x)$
is bounded for every compact set $K\subset\mathbb{V}$ and for each
$i$. Then \begin{equation}
\lim_{\epsilon_{i_{1}}\rightarrow0}...\lim_{\epsilon_{i_{p}}\rightarrow0}dd^{\#}f_{\epsilon_{i_{1}}}^{1}\wedge...\wedge dd^{\#}f_{\epsilon_{i_{p}}}^{p}=dd^{\#}f^{1}\wedge...\wedge dd^{\#}f_{p},\label{eq:intersectionsatbilityeq}\end{equation}
for any permutation $(i_{1},...,i_{p})$ of the n-tuple $(1,...,p)$. \end{prop}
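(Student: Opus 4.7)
The plan is to apply Proposition \ref{pro:intersectionstability2} iteratively, peeling off one $\epsilon$ at a time starting from the innermost limit. Since the intersection product $dd^\# g_1 \wedge \cdots \wedge dd^\# g_p$ is symmetric in its arguments, the order in which we write the wedge factors is immaterial; only the order in which the limits are taken, dictated by the permutation, matters for organizing the induction.

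Fix a permutation $(i_1, \ldots, i_p)$ and consider the innermost limit $\epsilon_{i_p} \to 0$. At this stage the functions $\{f^k_{\epsilon_{i_k}} : k \ne i_p\}$ are held fixed (and are convex by hypothesis), while the varying family $\{f^{i_p}_{\epsilon_{i_p}}\}$ consists of convex---hence continuous---functions converging pointwise to $f^{i_p}$ and uniformly bounded above on each compact subset of $\mathbb{V}$. Applying Proposition \ref{pro:intersectionstability2} with $f^{i_p}_{\epsilon_{i_p}}$ playing the role of $f_\epsilon$ and the other $p-1$ convex functions playing the role of the $g_j$'s yields
\[
\lim_{\epsilon_{i_p} \to 0} dd^\# f^1_{\epsilon_{i_1}} \wedge \cdots \wedge dd^\# f^p_{\epsilon_{i_p}} \;=\; dd^\# f^{i_p} \wedge \bigwedge_{k \ne i_p} dd^\# f^k_{\epsilon_{i_k}}.
\]

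We now induct on the number of limits already extracted. After $m$ such steps the current in question is the wedge of those $dd^\# f^{i_\ell}$ already freed of approximations (for $\ell > p - m$) with the remaining $dd^\# f^k_{\epsilon_{i_k}}$; all $p$ functions entering this wedge are convex---either by hypothesis or as limits extracted at an earlier stage. A further application of Proposition \ref{pro:intersectionstability2}, with varying family $\{f^{i_{p-m}}_{\epsilon_{i_{p-m}}}\}$, replaces $f^{i_{p-m}}_{\epsilon_{i_{p-m}}}$ by $f^{i_{p-m}}$ and completes the $(m+1)$-st step. After $p$ such steps, the iterated limit equals $dd^\# f^{i_1} \wedge \cdots \wedge dd^\# f^{i_p}$, which by the symmetry of the intersection product coincides with $dd^\# f^1 \wedge \cdots \wedge dd^\# f^p$, establishing \eqref{eq:intersectionsatbilityeq}.

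No genuine obstacle arises beyond verifying the hypotheses of \ref{pro:intersectionstability2} at each inductive step: the $p-1$ fixed functions are convex---either by assumption on the $f^k_{\epsilon_{i_k}}$ or because they are $f^{i_\ell}$ previously extracted, each convex as the pointwise limit of a convex family---while the varying family is convex (hence continuous), converges pointwise, and satisfies the local uniform upper bound. The implicit constraint $p \le n$, needed both for the intersection product to be nonzero and for the hypothesis ``$p - 1 < n$'' of \ref{pro:intersectionstability2} to hold at every step, may be assumed without loss of generality.
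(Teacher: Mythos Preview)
Your proof is correct and follows the same approach as the paper: the paper's proof simply observes that convexity of all functions in the families ensures the intermediate wedge products are well defined (strongly positive), and then applies Proposition \ref{pro:intersectionstability2} successively. Your write-up is a more detailed unpacking of exactly this iterative argument.
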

\begin{proof}
The assumption that the families consist entirely of convex functions
ensures that the expression inside the limit in \eqref{eq:intersectionsatbilityeq}
is strongly  positive. Thus, we can apply Proposition \ref{pro:intersectionstability2}
successively to obtain the desired conclusion.\end{proof}
\begin{defn}
The Monge-Ampère measure of a convex function $f$ is the positive
measure defined by\[
MA(f)=(dd^{\#}f)^{n}:=dd^{\#}f\wedge...\wedge dd^{\#}f,\]
 where the product is taken $n$ times.
\end{defn}
Note that we here identify closed, positive $(n,n)-$currents with
positive measures. If $f$ is smooth, then \[
MA(f)=det(\frac{\partial^{2}f}{\partial x_{i}\partial x_{j}})dx_{1}\wedge d\xi_{1}\wedge...\wedge dx_{n}\wedge d\xi_{n}.\]
A very nice paper concerning real Monge-Ampère measures is \cite{Taylor}.
In fact, our approach in this paper could be considered as a generalization
of the formalism defined there in. 
\begin{prop}
Let $f$ b\label{pro:homogenuousMAvanish}e a convex, $1-$homogeneous
function, that is, $f(\lambda x)=\lambda f(x)$ for every $x\in\mathbb{V}$
and $\lambda\in\mathbb{R}$. Then $MA(f)=0$ at every point $x\neq0$.\end{prop}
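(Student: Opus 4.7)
The plan is to exploit the scaling symmetry of $f$ under the dilation maps $\psi_t(x) = tx$ on $\mathbb{V}$ (for $t > 0$), extended to $\tilde{\psi}_t(x,\xi) = (tx, t\xi)$ on $\mathbb{E}$.

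First I would observe that, because $\tilde{\psi}_t$ is affine, its pullback commutes with $d$ and $d^{\#}$; together with the identity $\tilde{\psi}_t^* f = tf$ coming from $1$-homogeneity, this yields $\tilde{\psi}_t^*(dd^\# f) = dd^\#(tf) = t \cdot dd^\# f$ at the level of currents. Approximating $f$ uniformly on compact sets by smooth convex functions $f_k$, the compositions $f_k \circ \psi_t$ are smooth and convex and converge to $tf$ uniformly on compacts, so by the continuity of $\tilde{\psi}_t^*$ on $M_p$ together with Proposition \ref{pro:intersectionstability2} one can pass to the top power and obtain
\[
\tilde{\psi}_t^{*}\, MA(f) \;=\; t^{n}\, MA(f).
\]

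Next I would translate this current identity into a scaling relation for the underlying Radon measure $\mu_0 := MA(f)$ on $\mathbb{V}$. For any compactly supported test function $\phi$ one has $\phi = \tilde{\psi}_t^{*}\tilde{\phi}$ with $\tilde{\phi}(y) = \phi(y/t)$, and an application of formula \eqref{eq:variablechangeformula} (supplying the factor $|\det \psi_t| = t^n$) rewrites $\langle \tilde{\psi}_t^{*}MA(f), \phi \rangle$ as $t^{n}\int \phi(y/t)\, d\mu_0(y)$. Equating this with $t^{n}\int \phi\, d\mu_0$ and approximating indicator functions by test functions, one obtains
\[
\mu_0(tA) \;=\; \mu_0(A)
\]
for every Borel set $A \subset \mathbb{V}$ and every $t > 0$.

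Finally I would conclude by a countable-additivity argument. Fix an annulus $A = \{r_1 \leq |x| \leq r_2\}$ with $0 < r_1 < r_2 \leq 2r_1$. The dilates $\{2^{-k} A\}_{k \geq 0}$ are pairwise disjoint subsets of the ball $B(0, r_2)$, and each has the same $\mu_0$-measure as $A$. Since $MA(f)$ is a Radon measure, hence locally finite, $\mu_0(B(0,r_2)) < \infty$, which forces $\mu_0(A) = 0$. Any compact subset of $\mathbb{V} \setminus \{0\}$ can be covered by finitely many such annuli, so $MA(f)$ vanishes on $\mathbb{V} \setminus \{0\}$.

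The main obstacle I expect is the translation step: the pullback formula \eqref{eq:variablechangeformula} in this super-forms framework carries $|\det \psi|$ instead of the conventional $|\det \psi|^{\pm 1}$ of oriented integration, and one must apply it consistently in order to match the current identity $\tilde{\psi}_t^{*}MA(f) = t^{n}MA(f)$ with the scale-invariance $\mu_0(tA) = \mu_0(A)$ of the underlying measure on $\mathbb{V}$ without picking up spurious factors of $t$.
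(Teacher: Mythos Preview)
Your argument is correct and takes a genuinely different route from the paper's proof.

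The paper argues pointwise in the smooth case: at any $x\neq 0$, the $1$-homogeneity forces $f$ to be affine along the ray through $x$, so the Hessian $D^2f(x)$ has a zero eigenvalue and hence $\det D^2f(x)=0$; the general case is then obtained by approximating $f$ by \emph{smooth $1$-homogeneous} convex functions and invoking continuity of the Monge--Amp\`ere operator. Your approach instead exploits the dilation symmetry globally: from $\tilde\psi_t^*f=tf$ you get $\tilde\psi_t^*MA(f)=t^nMA(f)$, translate this via \eqref{eq:variablechangeformula} into the scale-invariance $\mu_0(tA)=\mu_0(A)$ of the underlying Radon measure, and finish with a countable-additivity argument on nested annuli.

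The trade-offs: the paper's argument is more elementary and geometric in the smooth case, but its approximation step quietly assumes one can find smooth convex approximants that are themselves $1$-homogeneous --- standard mollification does not preserve homogeneity, so this needs a word of justification. Your argument sidesteps this entirely: you only need arbitrary smooth convex approximants $f_k$, since the homogeneity is used once and for all in the identity $f\circ\psi_t=tf$ and the rest is the stability of Proposition~\ref{pro:intersectionstability2}. Your worry about the $|\det\psi|$ bookkeeping is well-placed but your outline handles it correctly: writing $\phi=\tilde\psi_t^*\tilde\phi$ with $\tilde\phi(y)=\phi(y/t)$ and applying \eqref{eq:variablechangeformula} does produce exactly the cancellation needed for $\mu_0(tA)=\mu_0(A)$. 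One cosmetic point: take $r_2<2r_1$ strictly (or use half-open annuli) so that the dilates $2^{-k}A$ are genuinely disjoint.
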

\begin{proof}
Assume first that $f$ is smooth and fix a point $x\neq0$. The homogeneity
of $f$ implies that there exists a direction in which $f$ is affine.
More precisely, there exists a linear subspace of dimension 1, such
that the restriction of $f$ to this subspace is piecewise affine,
the two pieces being separated by the origin. By an affine change
of coordinates we can thus assume that $\frac{\partial^{2}}{\partial x_{1}\partial x_{1}}f(x)=0,$
that is, one of the eigenvalues of $D^{2}f(x)$ vanishes. This implies
that $MA(f)(x)=0.$ If $f$ is not assumed to be smooth, we choose
a family of 1-homogeneous smooth convex functions such that $f_{i}\rightarrow f$.
By continuity of the Monge-Ampère operator, $MA(f)=\lim_{i\rightarrow\infty}MA(f_{i})=0$. \end{proof}
\begin{example}
We wish to calculate the Monge-Ampère measure of $dd^{\#}|x|$. First
we calculate \begin{equation}
dd^{\#}|x|=d(\frac{1}{2|x|}d^{\#}(|x|^{2}))=\frac{dd^{\#}|x|^{2}}{2|x|}-\frac{d|x|^{2}\wedge d^{\#}|x|^{2}}{4|x|^{3}}.\label{eq:ddshaprabsolutevalueofx}\end{equation}
But the form $\frac{d|x|^{2}\wedge d^{\#}|x|^{2}}{4|x|^{3}}=0$ on
$|x|=r>0$ and so, by using Stokes' theorem, we obtain \[
\int_{B(0,r)\times\mathbb{R}^{n}}(dd^{\#}|x|)^{n}=\int_{\partial B(0,r)\times\mathbb{R}^{n}}\frac{d^{\#}|x|\wedge(dd^{\#}|x|^{2})^{n-1}}{2^{n}|x|^{n}}=\]
\[
=\frac{{1}}{2^{n}r^{n}}\int_{B(0,r)\times\mathbb{R}^{n}}(\sum_{k}2dx_{k}\wedge d\xi_{k})^{n}=\frac{{n!}}{r^{n}}\int_{B(0,r)\times\mathbb{R}^{n}}dx_{1}\wedge d\xi_{1}\wedge...\wedge dx_{n}\wedge d\xi_{n}=\]
\[
=n!Vol_{n}(B(0,1)).\]
Since the above integral is independent of $r$ (or by using proposition
\ref{pro:homogenuousMAvanish}), we see that the measure $(dd^{\#}|x|)^{n}$
equals the Dirac measure at the origin multiplied with a dimensional
constant.
\end{example}
A useful class of convex functions are those that grow {}``at most
linearly at infinity''. By the similarity with the complex setting,
we define the\emph{ Lelong class} to be the class of such functions:
\begin{equation}
\mathcal{L}=\{f:\mathbb{R}^{n}\rightarrow\mathbb{R}:f(x)\leq C|x|+D,f\, convex,C\geq0,D\in\mathbb{R}\}.\label{eq:Lelonclassdef}\end{equation}
This class is useful in our context since the intersection of currents
whose potentials belongs to $\mathcal{L}$ has finite total mass.
To see this, we first consider the case of the Monge-Ampère measure
of functions in $\mathcal{L}$ :
\begin{prop}
\label{pro:comparasionprop}Let $f\in\mathcal{L}$ so that we can
find a constant $c>0$ for which $f\leq c|x|$, when $|x|$ is sufficiently
large. Then $f$ satisfies\[
\int_{\mathbb{R}^{n}\times\mathbb{R}^{n}}(dd^{\#}f)^{n}<+\infty.\]
\end{prop}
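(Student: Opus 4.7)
My plan is to regularize $f$, deduce a uniform Lipschitz bound from the linear growth hypothesis, and bound the total mass of $MA(g)$ for a smooth convex $g$ using the change of variables formula for the gradient map. Set $f_{\epsilon} = f \star \rho_{\epsilon}$, the standard mollifications. These are smooth and convex (convolution with a nonnegative kernel preserves convexity of $f$), and integrating the growth bound gives $f_{\epsilon}(x) \leq c|x| + D + c\epsilon$, so the $f_{\epsilon}$ all lie in $\mathcal{L}$ with the same constant $c$. By the definition of the intersection product following Proposition \ref{pro:limitisunique}, $(dd^{\#} f_{\epsilon})^{n}$ converges as $\epsilon \to 0$ to $(dd^{\#} f)^{n}$. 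It therefore suffices to bound $\int (dd^{\#} f_{\epsilon})^{n}$ uniformly in $\epsilon$ and transfer the bound to the limit using compactly supported nonnegative continuous cut-offs $\chi_{R}$ increasing to $1$.

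The key pointwise bound is the following: any smooth convex $g$ on $\mathbb{R}^{n}$ with $g(x) \leq c|x| + D'$ satisfies $|\nabla g(x)| \leq c$ everywhere. Indeed, setting $v = \nabla g(x_{0})$, convexity yields $g(x_{0} + tv) \geq g(x_{0}) + t|v|^{2}$ for $t > 0$, while the growth bound yields $g(x_{0} + tv) \leq c|x_{0}| + ct|v| + D'$; letting $t \to \infty$ forces $|v|^{2} \leq c|v|$, hence $|v| \leq c$. Applied to each $f_{\epsilon}$, this gives a uniform Lipschitz constant $c$.

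Now fix a smooth convex $g$ with $|\nabla g| \leq c$. By the explicit formula recorded right after the definition of the Monge-Amp\`ere measure, $(dd^{\#} g)^{n} = \det(D^{2} g) \, dx_{1} \wedge d\xi_{1} \wedge \cdots \wedge dx_{n} \wedge d\xi_{n}$, so $\int (dd^{\#} g)^{n} = \int_{\mathbb{R}^{n}} \det(D^{2}g)\, dx$. Let $A = \{x : \det D^{2} g(x) > 0\}$. The gradient map $\Phi := \nabla g$ is a local diffeomorphism on $A$ with image in $\bar{B}(0, c)$. Moreover $\Phi|_{A}$ is globally injective: if $\Phi(x_{1}) = \Phi(x_{2}) = p$ with $x_{1} \neq x_{2}$, both points are minimizers of the convex function $g(x) - p \cdot x$, which must then be constant on $[x_{1}, x_{2}]$; hence $g$ is affine on this segment, forcing $\det D^{2} g \equiv 0$ there, contradicting $x_{1}, x_{2} \in A$. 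Since $\det D^{2} g$ vanishes off $A$, the change of variables formula applied to $\Phi|_{A}$ gives
\[
\int_{\mathbb{R}^{n}} \det(D^{2}g)\, dx = \int_{A} \det(D^{2}g)\, dx = |\Phi(A)| \leq c^{n}|B(0,1)|.
\]

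Applying this to each $f_{\epsilon}$ gives the uniform bound $\int (dd^{\#} f_{\epsilon})^{n} \leq c^{n} |B(0,1)|$, which passes to the limit via the cut-offs $\chi_{R}$ together with Proposition \ref{pro:weakconvergenceorder}. The main subtlety is the global injectivity of $\Phi|_{A}$, which rests on the elementary fact that a convex function with two distinct minimizers is constant on the segment between them; apart from that, the argument is a combination of mollification, a gradient estimate, and classical change of variables.
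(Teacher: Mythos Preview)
Your argument is correct, and it takes a genuinely different route from the paper's. The paper proves the bound by a barrier/comparison trick: for each $r$ it chooses $D$ so that $H=\max\{f,\,-D+(c+\epsilon)|x|\}$ agrees with $f$ on $B(0,r)$ and with $-D+(c+\epsilon)|x|$ outside a large ball; positivity of $(dd^{\#}H)^n$ and Stokes' theorem then give
\[
\int_{B(0,r)}(dd^{\#}f)^n \le \int_{\mathbb{R}^n\times\mathbb{R}^n}(dd^{\#}(c+\epsilon)|x|)^n = (c+\epsilon)^n\int (dd^{\#}|x|)^n,
\]
the right side having been computed explicitly earlier. Your proof instead mollifies, extracts the uniform Lipschitz bound $|\nabla f_\epsilon|\le c$ from the linear growth, and then bounds $\int\det(D^2 f_\epsilon)\,dx$ by the Lebesgue measure of the image of the gradient map, using injectivity of $\nabla g$ on $\{\det D^2g>0\}$.

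Each approach has its own payoff. Yours is the classical Aleksandrov picture of the real Monge--Amp\`ere measure: it is self-contained, avoids the explicit computation of $(dd^{\#}|x|)^n$, and gives the sharp constant $\int (dd^{\#}f)^n\le n!\,c^n\,\mathrm{Vol}(B(0,1))$ directly (note the paper's displayed formula for $MA(f)$ suppresses the factor $n!$). The paper's max-construction, on the other hand, stays entirely inside the super-current calculus and is exactly the template reused immediately afterwards for the comparison principle $\int(dd^{\#}f)^n\le\int(dd^{\#}g)^n$ when $f\le g+O(1)$; your gradient-image argument does not obviously extend to that relative statement.

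One small remark on your injectivity step: from ``$g$ affine on $[x_1,x_2]$'' you need $\det D^2g(x_1)=0$, not just vanishing on the open segment. This is fine because $g$ is smooth: the one-variable function $t\mapsto g(x_1+t(x_2-x_1))$ is $C^2$ and affine on $[0,1]$, so its second derivative vanishes at $t=0$ by continuity, giving $(x_2-x_1)^T D^2g(x_1)(x_2-x_1)=0$; positive semidefiniteness then forces $D^2g(x_1)(x_2-x_1)=0$ and hence $\det D^2g(x_1)=0$. You clearly had this in mind, but it is worth stating since the contradiction lives at the endpoint.
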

\begin{proof}
Fix $\epsilon>0.$ For every $r>0$ we can find a constant $D>0$
such that $f\geq-D+(c+\epsilon)|x|$, if $|x|<r$ but $f\leq-D+(c+\epsilon)|x|,$
if $|x|>R$, for $R$ sufficiently large. Denote by $H$ the function
$\max\{f,-D+(c+\epsilon)|x|\}.$ Then $H$ is convex, and so $dd^{\#}H\geq0$.
We can exploit this as follows:\[
\int_{B(0,r)\times\mathbb{R}^{n}}(dd^{\#}f)^{n}=\int_{B(0,r)\times\mathbb{R}^{n}}(dd^{\#}H)^{n}\leq\int_{B(0,R)\times\mathbb{R}^{n}}(dd^{\#}H)^{n}=\]
\[
=\int_{\partial B(0,R)\times\mathbb{R}^{n}}d^{\#}(-D+(c+\epsilon)|x|)\wedge(dd^{\#}(-D+(c+\epsilon)|x|))^{n-1}=\]
\[
=\int_{B(0,R)\times\mathbb{R}^{n}}(dd^{\#}(-D+(c+\epsilon)|x|))^{n}\leq(c+\epsilon)^{n}\int_{\mathbb{R}^{n}\times\mathbb{R}^{n}}(dd^{\#}|x|)^{n}<+\infty.\]
Letting $r\rightarrow\infty$ we obtain\[
\int_{\mathbb{R}^{n}\times\mathbb{R}^{n}}(dd^{\#}f)^{n}<+\infty.\]
\end{proof}
\begin{prop}
\label{pro:intersectioninLelongclassgivesfinitemass}If $f_{1},...,f_{n}\in\mathcal{L},$then
\[
\int_{\mathbb{R}^{n}\times\mathbb{R}^{n}}dd^{\#}f_{1}\wedge...\wedge dd^{\#}f_{n}<+\infty.\]
\end{prop}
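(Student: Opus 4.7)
The plan is to reduce the mixed intersection to a pure Monge-Amp\`ere measure, where Proposition \ref{pro:comparasionprop} already gives finiteness, by exploiting multilinearity together with positivity.

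First, observe that the Lelong class $\mathcal{L}$ is closed under finite sums: if $f_i(x) \leq C_i|x| + D_i$ then $g := f_1 + \cdots + f_n$ satisfies $g(x) \leq (\sum_i C_i)|x| + \sum_i D_i$, and $g$ is convex as a sum of convex functions. Hence by Proposition \ref{pro:comparasionprop} we have
\[
\int_{\mathbb{R}^n\times\mathbb{R}^n}(dd^\# g)^n < +\infty.
\]

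Next, since $dd^\#$ is linear and the intersection product is symmetric and multilinear in its arguments (by Proposition \ref{pro:intersectionstability} together with the fact that for smooth convex functions the expansion is just the multilinear expansion of wedge products), we may expand
\[
(dd^\# g)^n = \sum_{k_1+\cdots+k_n = n} \binom{n}{k_1,\ldots,k_n}\, (dd^\# f_1)^{k_1} \wedge \cdots \wedge (dd^\# f_n)^{k_n}.
\]
Every term in this sum is a strongly positive $(n,n)$-current, and hence a positive measure on $\mathbb{R}^n$. In particular, the term corresponding to $k_1 = \cdots = k_n = 1$ has coefficient $n!$ and equals $n!\, dd^\# f_1 \wedge \cdots \wedge dd^\# f_n$. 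Dropping all the other (non-negative) terms yields
\[
n!\int_{\mathbb{R}^n\times\mathbb{R}^n} dd^\# f_1 \wedge \cdots \wedge dd^\# f_n \;\leq\; \int_{\mathbb{R}^n\times\mathbb{R}^n} (dd^\# g)^n \;<\; +\infty,
\]
which is the desired bound.

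The only real subtlety is justifying the multilinear expansion when the $f_i$ are not smooth. This is where the continuity result Proposition \ref{pro:intersectionstability} enters: one approximates each $f_i$ by a decreasing sequence of smooth convex $f_i^k$, expands $(dd^\#(f_1^k + \cdots + f_n^k))^n$ by the usual multilinearity of wedge products, and then passes to the limit termwise. Since every term on the right-hand side converges to the corresponding mixed Monge-Amp\`ere current (and these are all positive measures), the expansion identity, and therefore the inequality above, persist in the limit.
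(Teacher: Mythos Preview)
Your proof is correct and follows essentially the same route as the paper: form $g=f_1+\cdots+f_n\in\mathcal{L}$, apply Proposition~\ref{pro:comparasionprop} to bound $\int(dd^{\#}g)^n$, expand multilinearly, and use that every term in the expansion is a positive measure to isolate the mixed term. You are slightly more explicit than the paper about the multinomial coefficient and the approximation argument justifying the expansion for non-smooth $f_i$, but the idea is the same.
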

\begin{proof}
Since $f_{1}+...+f_{n}\in\mathcal{L}$ there exists a $C>0$ such
that $f_{1}+...+f_{n}\leq C|x|$, when $|x|$ is large enough. By
proposition \ref{pro:comparasionprop}, with $f=f_{1}+...+f_{n}$,
we obtain \[
\int_{\mathbb{R}^{n}\times\mathbb{R}^{n}}(dd^{\#}f_{1}+...+dd^{\#}f_{n})^{n}<+\infty.\]
 But $(dd^{\#}f_{1}+...+dd^{\#}f_{n})^{n}$ is a sum with one term
equal to $dd^{\#}f_{1}\wedge...\wedge dd^{\#}f_{n}$, and since every
term of the sum is a positive measure, we deduce that \[
\int_{\mathbb{R}^{n}\times\mathbb{R}^{n}}dd^{\#}f_{1}\wedge...\wedge dd^{\#}f_{n}<+\infty,\]
which concludes the proof.
\end{proof}
A slight modification of the proof of the above Proposition \ref{pro:comparasionprop}
gives us a useful comparison theorem, whose analogue in the complex
setting is well known.
\begin{prop}
\label{pro:comparasionprop-1}Let $f,g\in\mathcal{L}$ be such that
$f\leq g+O(1)$. Then\[
\int_{\mathbb{R}^{n}\times\mathbb{R}^{n}}(dd^{\#}f)^{n}\leq\int_{\mathbb{R}^{n}\times\mathbb{R}^{n}}(dd^{\#}g)^{n}.\]
\end{prop}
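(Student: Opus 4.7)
The plan is to mimic the Stokes-type argument from Proposition~\ref{pro:comparasionprop}, after reducing to a situation where $g$ strictly dominates $f$ at infinity. Writing $f \leq g + C$ and replacing $g$ by $g + C$ (which does not alter $dd^{\#}g$), we may assume $f \leq g$ throughout $\mathbb{R}^n$. For a fixed $\epsilon > 0$, set $g_\epsilon := g + \epsilon |x|$; then $g_\epsilon \in \mathcal{L}$, and $g_\epsilon - f \geq \epsilon |x|$ everywhere.

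For $M > 0$, consider the convex function $H_M := \max\{f,\, g_\epsilon - M\}$. The inequality $g_\epsilon - f \geq \epsilon|x|$ forces $H_M = g_\epsilon - M$ on $\{|x| \geq M/\epsilon\}$, while for any fixed $r > 0$, choosing $M$ large enough makes $H_M = f$ on $B(0,r)$. For such $M$ and any $R > M/\epsilon$, the same reasoning as in Proposition~\ref{pro:comparasionprop} gives
\[
\int_{B(0,r)\times\mathbb{R}^{n}}(dd^{\#}f)^{n}
=\int_{B(0,r)\times\mathbb{R}^{n}}(dd^{\#}H_{M})^{n}
\leq\int_{B(0,R)\times\mathbb{R}^{n}}(dd^{\#}H_{M})^{n}.
\]
Because $H_M$ coincides with $g_\epsilon - M$ in a neighbourhood of $\partial B(0,R)$, two applications of Stokes (to $d^{\#}H_M \wedge (dd^{\#}H_M)^{n-1}$ and then back) convert the right-hand side into $\int_{B(0,R)\times\mathbb{R}^{n}}(dd^{\#}g_\epsilon)^{n}$. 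Letting $r, R \to \infty$ therefore yields
\[
\int_{\mathbb{R}^{n}\times\mathbb{R}^{n}}(dd^{\#}f)^{n}\leq\int_{\mathbb{R}^{n}\times\mathbb{R}^{n}}(dd^{\#}g_\epsilon)^{n}.
\]

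To conclude, expand by multilinearity of the intersection product:
\[
(dd^{\#}g_{\epsilon})^{n}=\sum_{k=0}^{n}\binom{n}{k}\epsilon^{k}\,(dd^{\#}g)^{n-k}\wedge(dd^{\#}|x|)^{k}.
\]
Each summand has finite total mass by Proposition~\ref{pro:intersectioninLelongclassgivesfinitemass} applied to the $n$-tuple $(g,\dots,g,|x|,\dots,|x|)\in\mathcal{L}^n$, so letting $\epsilon\to 0$ kills every term except the one with $k=0$, delivering the desired inequality. The main technical difficulty I anticipate is justifying the Stokes manipulations when $H_M$ is only Lipschitz rather than smooth; the natural workaround is to replace $f$ and $g_\epsilon$ by their convolutions with $\rho_\delta$, perform the argument for the corresponding smooth $H_M^\delta$, and then pass $\delta\to 0$ using Proposition~\ref{pro:intersectionstability2} to control the Monge--Amp\`ere masses in the limit. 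This is the same regularization technique implicit in Proposition~\ref{pro:comparasionprop}, so no essentially new obstacle appears.
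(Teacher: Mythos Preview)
Your proof is correct and follows essentially the same approach as the paper: the paper also compares $f$ with $g+\epsilon|x|$ shifted down by a large constant, takes the maximum, runs the Stokes/monotonicity argument of Proposition~\ref{pro:comparasionprop} to pass from $(dd^{\#}f)^n$ on a small ball to $(dd^{\#}(g+\epsilon|x|))^n$ on a large ball, and then expands multilinearly and lets $\epsilon\to 0$ using Proposition~\ref{pro:intersectioninLelongclassgivesfinitemass}. Your preliminary normalization $f\leq g$ is a minor cosmetic difference (the paper absorbs the $O(1)$ constant into its $D$), and your remarks on regularization make explicit what the paper leaves implicit.
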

\begin{proof}
Fix $\epsilon>0$. For every $r>0$ we can find a constant $D>0$
such that $f\geq-D+g+\epsilon|x|$, if $|x|<r$ but $f\leq-D+g+\epsilon|x|,$
if $|x|>R$, for $R$ sufficiently large. Denote by $H$ the function
$\max\{f,-D+g+\epsilon|x|\}.$ Then $H$ is convex, and so $dd^{\#}H\geq0$.
As before:\[
\int_{B(0,r)\times\mathbb{R}^{n}}(dd^{\#}f)^{n}=\int_{B(0,r)\times\mathbb{R}^{n}}(dd^{\#}H)^{n}\leq\int_{B(0,R)\times\mathbb{R}^{n}}(dd^{\#}H)^{n}=\]
\[
=\int_{B(0,R)\times\mathbb{R}^{n}}(dd^{\#}(-D+g+\epsilon|x|)^{n}\leq\int_{\mathbb{R}^{n}\times\mathbb{R}^{n}}(dd^{\#}g+\epsilon dd^{\#}|x|)^{n}.\]
Letting $r\rightarrow\infty$ we obtain that, for every $\epsilon>0,$
\[
\int_{\mathbb{R}^{n}\times\mathbb{R}^{n}}(dd^{\#}f)^{n}\leq\int_{\mathbb{R}^{n}\times\mathbb{R}^{n}}(dd^{\#}g+\epsilon dd^{\#}|x|)^{n}.\]
This last integral contains terms of the type $(dd^{\#}g)^{n-k}\wedge\epsilon^{k}(dd^{\#}|x|)^{k}$
with $k=0,...,n$. If $k\neq n$, proposition \ref{pro:intersectioninLelongclassgivesfinitemass}
tells us that \[
\int_{\mathbb{R}^{n}\times\mathbb{R}^{n}}(dd^{\#}g)^{n-k}\wedge\epsilon^{k}(dd^{\#}|x|)^{k}\leq C_{k}\epsilon^{k}\]
and consequently there is a constant $C>0$ (independent of $\epsilon$)
for which \[
\int_{\mathbb{R}^{n}\times\mathbb{R}^{n}}(dd^{\#}g+\epsilon dd^{\#}|x|)^{n}\leq\int_{\mathbb{R}^{n}\times\mathbb{R}^{n}}(dd^{\#}g)^{n}+\epsilon C.\]
Letting $\epsilon\rightarrow0$ completes the proof.
\end{proof}
Interchanging the roles of $f$ and $g$ in the above proposition
gives us:
\begin{cor}
\label{cor:mongeamperecomparasion2}If $f,g\in\mathcal{L}$ satisfy
\[
|f-g|\leq C,\]
for some constant $C>0,$ then \[
\int_{\mathbb{R}^{n}\times\mathbb{R}^{n}}(dd^{\#}f)^{n}=\int_{\mathbb{R}^{n}\times\mathbb{R}^{n}}(dd^{\#}g)^{n}.\]

\end{cor}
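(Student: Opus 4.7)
The corollary is an immediate two-line consequence of Proposition \ref{pro:comparasionprop-1}, so the plan is simply to apply that proposition twice with the roles of $f$ and $g$ exchanged. There is no real obstacle to identify, since the hypothesis $|f-g|\leq C$ is exactly the symmetric version of the one-sided comparison hypothesis $f\leq g+O(1)$.

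More precisely, first I would observe that the hypothesis $|f-g|\leq C$ gives the two pointwise bounds $f\leq g+C$ and $g\leq f+C$, both of which are of the form ``$\,\cdot\, \leq\, \cdot\, +O(1)$'' required by Proposition \ref{pro:comparasionprop-1}. Since both $f$ and $g$ are assumed to lie in the Lelong class $\mathcal{L}$, each of these two bounds places us inside the hypotheses of that proposition.

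Then I would apply Proposition \ref{pro:comparasionprop-1} to the pair $(f,g)$ with $f\leq g+O(1)$, obtaining
\[
\int_{\mathbb{R}^{n}\times\mathbb{R}^{n}}(dd^{\#}f)^{n}\leq\int_{\mathbb{R}^{n}\times\mathbb{R}^{n}}(dd^{\#}g)^{n},
\]
and then once more with the roles of $f$ and $g$ swapped, using the reverse bound $g\leq f+O(1)$, to obtain
\[
\int_{\mathbb{R}^{n}\times\mathbb{R}^{n}}(dd^{\#}g)^{n}\leq\int_{\mathbb{R}^{n}\times\mathbb{R}^{n}}(dd^{\#}f)^{n}.
\]
Combining these two inequalities yields the claimed equality. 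This is all that is needed; the content of the corollary is entirely packaged in Proposition \ref{pro:comparasionprop-1}, which in turn rests on the truncation-by-maximum trick and the finiteness given by Proposition \ref{pro:intersectioninLelongclassgivesfinitemass}.
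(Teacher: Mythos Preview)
Your proof is correct and matches the paper's approach exactly: the paper simply states that interchanging the roles of $f$ and $g$ in Proposition \ref{pro:comparasionprop-1} gives the corollary, which is precisely the two applications of that proposition you describe.
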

In fact, the proof gives us a slightly stronger statement, which we
will find useful:
\begin{cor}
\label{cor:mongeamperecomparasion}If $f,g\in\mathcal{L}$ satisfy
\[
|f-g|\leq C+\epsilon|x|\]

for every $\epsilon>0$ and for some constant $C>0,$ then \[
\int_{\mathbb{R}^{n}\times\mathbb{R}^{n}}(dd^{\#}f)^{n}=\int_{\mathbb{R}^{n}\times\mathbb{R}^{n}}(dd^{\#}g)^{n}.\]
\end{cor}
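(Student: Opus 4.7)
My plan is to reduce this corollary to Proposition \ref{pro:comparasionprop-1} by absorbing the linear term $\epsilon|x|$ into the function $g$. The assumption $|f-g|\leq C+\epsilon|x|$ says in particular that, for every $\epsilon>0$,
\[
f \leq (g+\epsilon|x|)+C,
\]
and since $|x|\in\mathcal{L}$, the function $\tilde g_\epsilon := g+\epsilon|x|$ is in $\mathcal{L}$ as well. Thus $f \leq \tilde g_\epsilon + O(1)$, and Proposition \ref{pro:comparasionprop-1} applies to yield
\[
\int_{\mathbb{R}^n\times\mathbb{R}^n}(dd^{\#}f)^{n} \leq \int_{\mathbb{R}^n\times\mathbb{R}^n}(dd^{\#}(g+\epsilon|x|))^{n}.
\]

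The next step is to expand the right-hand side multinomially. Writing
\[
(dd^{\#}(g+\epsilon|x|))^n = \sum_{k=0}^{n}\binom{n}{k}\epsilon^{k}\,(dd^{\#}g)^{n-k}\wedge (dd^{\#}|x|)^{k},
\]
the $k=0$ term gives exactly $\int(dd^{\#}g)^{n}$, while each of the cross terms with $k\geq 1$ is controlled by Proposition \ref{pro:intersectioninLelongclassgivesfinitemass}: since $g,|x|\in\mathcal{L}$, the integrals $\int(dd^{\#}g)^{n-k}\wedge(dd^{\#}|x|)^{k}$ are finite, so those terms contribute at most a constant times $\epsilon$. Letting $\epsilon\to 0$ therefore yields
\[
\int_{\mathbb{R}^n\times\mathbb{R}^n}(dd^{\#}f)^{n} \leq \int_{\mathbb{R}^n\times\mathbb{R}^n}(dd^{\#}g)^{n}.
\]

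Finally, because the hypothesis $|f-g|\leq C+\epsilon|x|$ is symmetric in $f$ and $g$, interchanging their roles gives the reverse inequality, which completes the proof. There is no real obstacle here: the whole statement is essentially a mild strengthening built into the mechanism of Proposition \ref{pro:comparasionprop-1}, and the only thing to check is that the linear perturbation $\epsilon|x|$ can be absorbed, which works precisely because the mixed Monge--Amp\`ere masses of functions in $\mathcal{L}$ are finite.
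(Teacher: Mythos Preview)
Your proof is correct and matches the paper's approach: the paper simply remarks that the proof of Proposition~\ref{pro:comparasionprop-1} already yields this stronger statement, and your argument makes that explicit by applying the proposition to $g+\epsilon|x|\in\mathcal{L}$, expanding $(dd^{\#}(g+\epsilon|x|))^{n}$, bounding the cross terms via Proposition~\ref{pro:intersectioninLelongclassgivesfinitemass}, and letting $\epsilon\to 0$ before swapping $f$ and $g$.
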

\begin{example}
Let $K\subset\mathbb{R}^{n}$ be a convex set containing the origin,
and denote by $H_{K}$ its \emph{support function}, that is, $H_{K}(x)=\sup_{\xi\in K}\{x\cdot\xi\}$.
The polar of $K$, denoted $K^{\circ},$ is defined by $K^{\circ}=\{x:H_{K}(x)\leq1\}$.
If $\partial K$ is smooth and $K$ strictly convex, it is well known
that the map $x\mapsto\partial H_{K}(x)$ defines a diffeomorphism
between $\partial K$ and $\partial K^{\circ}$. We can thus introduce
$\frac{\partial H_{K}}{\partial x_{j}}$ as coordinates on $\partial K^{\circ}$
to obtain \[
n!Vol(K)=\int_{K\times\mathbb{R}^{n}}(dd^{\#}|x|^{2})^{n}=\int_{\partial K\times\mathbb{R}^{n}}d^{\#}|x|^{2}\wedge(dd^{\#}|x|^{2})^{n-1}=\]

\[
=c_{n}\int_{\partial K}\sum x_{i}\widehat{dx_{i}}=c_{n}\int_{\partial K^{\circ}}\sum\frac{\partial H_{K}}{\partial x_{i}}\cdot\widehat{d\frac{\partial H_{K}}{\partial x_{i}}}=\int_{K^{\circ}\times\mathbb{R}^{n}}d^{\#}H_{K}\wedge(dd^{\#}H_{K})^{n-1}=\]

\begin{equation}
=\int_{K^{\circ}\times\mathbb{R}^{n}}(dd^{\#}H_{K})^{n}=\int_{\mathbb{R}^{n}\times\mathbb{R}^{n}}(dd^{\#}H_{K})^{n}.\label{eq:volKequalsmaHK}\end{equation}
In the last equality we used proposition \ref{pro:homogenuousMAvanish}:
since $H_{K}$ is smooth outside the origin (thanks to $\partial K$
being smooth) and $1$-homogeneous, the support of $(dd^{\#}H_{K})^{n}$
is the origin. By approximation, the same formula holds without any
smoothness assumptions on $\partial K.$ 
\end{example}
One can generalize this example as follows: Recall that if $K_{1},...,K_{n}$
are convex sets in $\mathbb{R}^{n}$ one can define the \emph{mixed
volume }of $K_{1},...,K_{n}$, which we will denote by $V(K_{1},...,K_{n})$
as follows: consider the function \[
P(t_{1},...,t_{n}):=Vol(t_{1}K_{1}+...+t_{n}K_{n}),\]
 where \[
\sum_{j\in J}t_{j}K_{j}:=\{t_{j_{1}}\cdot x_{j_{1}}+...+t_{j_{l}}\cdot x_{j_{l}}:x_{j_{i}}\in K_{j_{i}},J=(j_{i},...,j_{l})\}\]
 is the Minkowski sum. As will follow from the proof of Proposition
\ref{pro:mixedvolumeMA-1}, $P$ is a $n-$homogeneous polynomial
in $n$ variables:\[
P(t_{1},...,t_{n})=\sum_{i_{1},...,i_{n}=1}^{n}a_{i_{1},...,i_{n}}\cdot t_{i_{1}}\cdot...\cdot t_{i_{n}},\]
 for some coefficients $a_{i_{1},...,i_{n}}$. The mixed volume is
the coefficient in this polynomial corresponding to the monomial $t_{1}\cdot....\cdot t_{n}$,
that is, \[
V(K_{1},...,K_{n}):=a_{1,...,n}.\]
We claim the following:
\begin{prop}
\label{pro:mixedvolumeMA-1}Let $K_{1},...,K_{n}$ be convex sets
in $\mathbb{R}^{n}$ with corresponding support functions $H_{K_{i}}.$
Then \[
dd^{\#}H_{K_{1}}\wedge...\wedge dd^{\#}H_{K_{n}}=n!\cdot V(K_{1},...,K_{n})\delta_{0}\omega_{n},\]
and $P(t_{1},...,t_{n}):=Vol(t_{1}K_{1}+...+t_{n}K_{n})$ is a $n-$homogeneous
polynomial.\end{prop}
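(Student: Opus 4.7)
The plan is to derive both claims simultaneously by applying the formula $\int (dd^{\#} H_K)^n = n! \cdot Vol(K)$ from the preceding example to the support function $H_{\sum t_i K_i} = \sum t_i H_{K_i}$ of a Minkowski combination, and then comparing coefficients of a polynomial identity.

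Concretely, since $dd^{\#}$ is bilinear over wedge products, one expands
\[ \bigl(dd^{\#} \textstyle\sum t_i H_{K_i}\bigr)^n = \sum_{i_1, \dots, i_n = 1}^n t_{i_1} \cdots t_{i_n}\, dd^{\#} H_{K_{i_1}} \wedge \cdots \wedge dd^{\#} H_{K_{i_n}}. \]
The function $\sum t_i H_{K_i}$ is convex and positively $1$-homogeneous (when the $t_i$ are non-negative), so by Proposition \ref{pro:homogenuousMAvanish} its top Monge-Amp\`ere is supported at the origin. Each summand on the right is a strongly positive $(n,n)$-current, hence a positive measure; a sum of positive measures can be supported at $\{0\}$ only if each individual summand is, so every mixed Monge-Amp\`ere current $dd^{\#} H_{K_{i_1}} \wedge \cdots \wedge dd^{\#} H_{K_{i_n}}$ must equal $c_{i_1 \dots i_n}\, \delta_0 \omega_n$ for some $c_{i_1 \dots i_n} \geq 0$. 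This step --- exploiting positivity to localize each term at the origin --- is the principal argument I expect to be the crux.

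Integrating the expansion and invoking the example's volume formula yields
\[ n! \cdot Vol\bigl(\textstyle\sum t_i K_i\bigr) = \sum_{i_1, \dots, i_n} c_{i_1 \dots i_n}\, t_{i_1} \cdots t_{i_n}, \]
which is visibly an $n$-homogeneous polynomial, proving the polynomial claim for $P$. Because mixed Monge-Amp\`ere is symmetric in its arguments, the coefficients $c_{i_1 \dots i_n}$ are symmetric; matching the coefficient of $t_1 \cdots t_n$ against the paper's symmetric representation $P(t) = \sum a_{i_1 \dots i_n} t_{i_1} \cdots t_{i_n}$ with $V(K_1, \dots, K_n) := a_{1, \dots, n}$ produces $c_{1, \dots, n} = n! \cdot V(K_1, \dots, K_n)$, yielding the measure identity.

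Finally, the implicit smoothness and strict convexity assumptions used when applying Proposition \ref{pro:homogenuousMAvanish} in the smooth case are removed by approximating each $K_i$ with smooth strictly convex bodies $K_{i,\epsilon}$ so that $H_{K_{i,\epsilon}} \to H_{K_i}$ uniformly on compact sets. Proposition \ref{pro:intersectionstability} then gives the required weak convergence of the mixed currents, while continuity of volume and of support functions in the Hausdorff metric transfers both conclusions to arbitrary convex $K_1, \dots, K_n$.
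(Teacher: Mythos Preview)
Your argument is correct and follows essentially the same route as the paper: expand $MA(\sum t_i H_{K_i})$ multilinearly, identify it with $n!\,Vol(\sum t_i K_i)\,\delta_0\,\omega_n$ via the preceding example, and compare coefficients of $t_1\cdots t_n$.

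However, the step you single out as ``the crux'' --- using positivity to force each mixed term to be supported at the origin --- is superfluous. Once the current-valued polynomial identity holds on an open set of parameters $t$, comparing coefficients directly yields $dd^{\#}H_{K_1}\wedge\cdots\wedge dd^{\#}H_{K_n}=n!\,V(K_1,\dots,K_n)\,\delta_0\,\omega_n$ as an equality of currents, without any prior knowledge of the support of the left-hand side; the paper does exactly this. Likewise your final approximation step is unnecessary: both Proposition~\ref{pro:homogenuousMAvanish} and formula~\eqref{eq:volKequalsmaHK} are already stated (and proved by approximation) for arbitrary convex bodies, so no further smoothing is required here.
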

\begin{proof}
To begin with, we note that \begin{equation}
H_{tK+sL}=tH_{tK}+sH_{L}\label{eq:minkowskiisumsupportfunct}\end{equation}
 if $K,L$ are compact subsets, and $t,s$ real numbers. This is justified
by the following equalities:

\[
H_{tK+sL}(x)=\sup_{\xi=\xi_{1}+\xi_{2}\in tK+sL}\{\xi\cdot x\}=\sup_{\xi_{1}\in K,\xi_{2}\in L}\{t\xi_{1}\cdot x+s\xi_{2}\cdot x\}=tH_{K}(x)+sH_{L}(x).\]
Generalizing this slightly, we obtain the identity\[
MA(H_{t_{1}K_{1}+...+t_{n}K_{n}})=MA(t_{1}H_{K_{1}}+...+t_{n}H_{K_{n}}),\]
and thus $MA(H_{t_{1}K_{1}+...+t_{n}K_{n}})$ is a $n-$homogeneous
polynomial in $(t_{1},..,t_{n}).$ Moreover, by \eqref{eq:volKequalsmaHK}
we know that\[
MA(H_{t_{1}K_{1}+...+t_{n}K_{n}})=n!Vol(t_{1}K_{1}+...+t_{n}K_{n})\cdot\delta_{0}\cdot\omega_{n}.\]
This immediately gives us that $P(t_{1},...,t_{n})=Vol(t_{1}K_{1}+...+t_{n}K_{n})$
is an $n$-homogeneous polynomial. Moreover, comparing coefficients
of the two polynomials, we see that\[
dd^{\#}H_{K_{1}}\wedge...\wedge dd^{\#}H_{K_{n}}=n!\cdot V(K_{1},...,K_{n})\cdot\delta_{0}\cdot\omega_{n}\]
as desired.\end{proof}
\begin{rem}
These results should be compared with the results already obtained
in \cite{Passare}.
\end{rem}
Let $f$ be a convex function on $\mathbb{R}^{n}$, belonging to the
Lelong class $\mathcal{L}$. To this $f$ we associate the function
\[
\tilde{f}(x)=\lim_{t\rightarrow\infty}\frac{{f(tx)}}{t},\]
where the limits exists thanks to the assumption on linear growth
at infinity. This function $\tilde{f}$ is easily seen to be convex
and one-homogeneous. Moreover, we claim that \begin{equation}
|f-\tilde{f}|\leq C+\epsilon|x|,\label{eq:homogenizationinequality}\end{equation}
 for every $\epsilon>0$ . This is readily seen as follows: By standard
properties of convex functions, the expression $\frac{f(tx)-f(0)}{t}$
is increasing in $t$, for every $x$. We recall Dini's theorem which
says the following: if $f$ is continuous on a compact set $K$, and
$f_{t}$ is a monotone sequence of continuous functions which converge
to $\tilde{f}$ pointwise, then the convergence is in fact uniform
on $K$. Consequently, for each $\epsilon>0$ we have that\[
\sup_{|x|=1}|\frac{f(tx)-f(0)}{t}-\tilde{f}(x)|<\epsilon,\]
if $t>T$, for some $T>0$ . Multiplying through by $t$ we obtain
\[
\sup_{|x|=1}|f(tx)-f(0)-\tilde{f}(tx)|<t\epsilon,\]
if $t$ is sufficiently large, which implies \eqref{eq:homogenizationinequality}.
An application of Corollary \ref{cor:mongeamperecomparasion} shows
that the total Monge-Ampère mass of $f$ equals that of $\tilde{f}$
:
\begin{prop}
\label{pro:relationbetweenfanditshomogenization}With $f$ and $\tilde{f}$
as above \[
\int_{\mathbb{R}^{n}\times\mathbb{R}^{n}}(dd^{\#}f)^{n}=\int_{\mathbb{R}^{n}\times\mathbb{R}^{n}}(dd^{\#}\tilde{f})^{n}.\]

\end{prop}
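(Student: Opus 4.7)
The plan is to apply Corollary \ref{cor:mongeamperecomparasion} directly to the pair $(f,\tilde{f})$. All the necessary ingredients have, in fact, been set up in the discussion immediately preceding the statement of the proposition, so the proof will be very short.

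First I would check that $\tilde{f}$ is itself a legitimate member of $\mathcal{L}$, since the corollary only applies to pairs of functions in the Lelong class. Convexity and $1$-homogeneity of $\tilde{f}$ were already noted in the text. For the growth bound: if $f(x)\leq C|x|+D$, then evaluating at $tx$ and dividing by $t$ gives $f(tx)/t\leq C|x|+D/t$, whose limit as $t\to\infty$ yields $\tilde{f}(x)\leq C|x|$, so indeed $\tilde{f}\in\mathcal{L}$.

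Next I would invoke the inequality \eqref{eq:homogenizationinequality}, already established above via Dini's theorem: for every $\epsilon>0$ there is a constant $C_\epsilon$ such that
\[
|f-\tilde{f}|\leq C_\epsilon+\epsilon|x|.
\]
This is precisely the hypothesis of Corollary \ref{cor:mongeamperecomparasion}, applied with $g=\tilde{f}$. Hence
\[
\int_{\mathbb{R}^{n}\times\mathbb{R}^{n}}(dd^{\#}f)^{n}=\int_{\mathbb{R}^{n}\times\mathbb{R}^{n}}(dd^{\#}\tilde{f})^{n},
\]
which is the statement of the proposition.

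There is really no hard step left: the analytical work (the comparison estimate, the Dini-type uniform convergence on the unit sphere, and the expansion $(dd^{\#}g+\epsilon dd^{\#}|x|)^{n}$ argument) has all been carried out already. The only thing one must be slightly careful about is that the constant $C$ in \eqref{eq:homogenizationinequality} depends on $\epsilon$, but this is exactly the form in which Corollary \ref{cor:mongeamperecomparasion} was stated, so no refinement is needed.
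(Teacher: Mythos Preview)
Your proposal is correct and matches the paper's own argument exactly: the paper simply remarks that an application of Corollary~\ref{cor:mongeamperecomparasion}, using the inequality~\eqref{eq:homogenizationinequality} established just before, gives the result. Your additional verification that $\tilde{f}\in\mathcal{L}$ is a reasonable detail to make explicit.
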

As we will see in Section \ref{sub:Bezout's-theorem}, the above Proposition
is essentially Bezout's theorem in tropical geometry.

\section{Lelong numbers, trace measures and push forwards of currents.}
\begin{defn}
The\emph{ trace measure} of a $(p,p)$-current $T$ is defined as\[
\Theta_{T}(U)=\frac{1}{2^{n-p}(n-p)!}\int_{U\times\mathbb{R}^{n}}T\wedge(dd^{\#}|x|^{2})^{n-p},\]
 for each Borel-set $U\subset\mathbb{R}^{n}.$ \end{defn}
\begin{prop}
\label{pro:tracemeasureinequality}If  $T$ is a positive $(p,p)-$current,
then $\Theta_{T}$ is a positive measure, and \begin{equation}
|T_{IJ}|\leq C\cdot\Theta_{T},\label{eq:tracemeasureineq}\end{equation}
for some $C>0$. \end{prop}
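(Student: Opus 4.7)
The plan is to identify the trace measure $\Theta_{T}$ explicitly in coordinates as the sum of the diagonal coefficients $\sum_{|I|=p} T_{II}$, and then appeal to Proposition \ref{pro:tracemeasureinequality-1}.

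First I would observe that $dd^{\#}|x|^{2}=2\omega$ where $\omega=\sum_{i}dx_{i}\wedge d\xi_{i}$, so $(dd^{\#}|x|^{2})^{n-p}=2^{n-p}\omega^{n-p}$. Since each $dx_{i}\wedge d\xi_{i}$ is strongly positive, so is $\omega^{n-p}$. The positivity of $\Theta_{T}$ as a measure follows immediately from property 2 of Lemma \ref{lem:elementaryproperties}: $T\wedge\omega^{n-p}$ is a positive $(n,n)$-current, whose integral over $U\times\mathbb{R}^{n}$ is therefore non-negative.

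For the pointwise inequality, I would expand $\omega^{n-p}$ via property 1 of Lemma \ref{lem:elementaryproperties} as
\[
\omega^{n-p}=(n-p)!\,\sigma_{n-p}\sum_{|M|=n-p}dx_{M}\wedge d\xi_{M},
\]
and then wedge with $T=\sum_{|I|=|J|=p}T_{IJ}\,\sigma_{p}\,dx_{I}\wedge d\xi_{J}$. Only the terms with $J=I$ and $M=I^{c}$ survive (all others contain a repeated $dx_{i}$ or $d\xi_{j}$), and the sign identity
\[
\sigma_{p}\,\sigma_{n-p}\,(-1)^{p(n-p)}\,\sigma_{n}=1,
\]
which already appeared in the proof of Lemma \ref{lem:elementaryproperties}(4), collapses the sum to $T\wedge\omega^{n-p}=(n-p)!\sum_{|I|=p}T_{II}\,\omega_{n}$. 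Combined with the factor $2^{n-p}$ coming from $(dd^{\#}|x|^{2})^{n-p}$ and the normalising constant $1/(2^{n-p}(n-p)!)$ in the definition of $\Theta_{T}$, this yields the clean identification
\[
\Theta_{T}=\sum_{|I|=p}T_{II}
\]
as measures on $\mathbb{V}$.

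With this identification in hand, the inequality $|T_{IJ}|\leq C\,\Theta_{T}$ is a direct restatement of Proposition \ref{pro:tracemeasureinequality-1}, which bounds $|\langle T_{IJ},\phi\rangle|$ by a constant times $\sum_{|I|=p}\langle T_{II},\phi\rangle$ for non-negative test functions $\phi$. The only obstacle is the sign bookkeeping in the computation of $T\wedge\omega^{n-p}$, but these are exactly the sign identities already invoked in the earlier proofs of Lemma \ref{lem:elementaryproperties}(4) and Proposition \ref{lem:postivedefinite}, so the computation is essentially routine.
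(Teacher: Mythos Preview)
Your proposal is correct and follows the same approach as the paper, whose entire proof reads ``This follows immediately from Proposition \ref{pro:tracemeasureinequality-1}.'' You have simply made explicit the identification $\Theta_{T}=\sum_{|I|=p}T_{II}$ that the paper leaves to the reader.
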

\begin{proof}
This follows immediately from Proposition \ref{pro:tracemeasureinequality-1}.\end{proof}
\begin{rem}
Let us compare with the complex setting: if $S$ is a complex, weakly
positive $(p,p)$-current, then $S$ always satisfies a trace measure
inequality of the type \eqref{eq:tracemeasureineq}. However, in our
setting the form $\alpha$, on $\mathbb{E}=\mathbb{R}^{4}\times\mathbb{R}^{4}$,
given by \[
\alpha=dx_{1}\wedge dx_{2}\wedge d\xi_{3}\wedge d\xi_{4}+dx_{2}\wedge dx_{3}\wedge d\xi_{1}\wedge d\xi_{4}-dx_{1}\wedge dx_{3}\wedge d\xi_{2}\wedge d\xi_{4}+\]
\[
+dx_{3}\wedge dx_{4}\wedge d\xi_{1}\wedge d\xi_{2}+dx_{1}\wedge dx_{4}\wedge d\xi_{2}\wedge d\xi_{3}-dx_{2}\wedge dx_{4}\wedge d\xi_{1}\wedge d\xi_{3},\]
 satisfies \begin{equation}
\alpha\wedge v\wedge J(v)=0\label{eq:notpositive}\end{equation}
 for every $(1,0)-$form $v$. Thus, $\alpha$ is weakly positive,
but all the diagonal elements are 0, and consequently $\alpha$ does
not satisfy an inequality of the type $\eqref{eq:tracemeasureineq}$.
This implies a significant difference between our setting and the
complex setting. In fact, in the complex case, the strongly positive
forms constitute a basis for the space of all forms. Equation \eqref{eq:notpositive}
tells us that this is not the case in our setting. \end{rem}
\begin{prop}
\label{pro:increasingquotient}For fixed $x\in\mathbb{R}^{n}$, if
$T$ is a weakly positive $(p,p)$-current, then the function \[
r\mapsto\frac{{\Theta_{T}}}{r^{n-p}}(B(x,r))\]
is increasing on $\mathbb{R}_{+}$. \end{prop}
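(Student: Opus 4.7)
The plan is to adapt the classical monotonicity-of-Lelong-numbers argument from complex pluripotential theory, using $|x|^{2}$ and $|x|$ in place of $|z|^{2}$ and $\log|z|$. After translating I may assume $x=0$, and (as the classical argument requires, and as seems tacit in the statement) that $T$ is $d$-closed. Convolving with the smoothing kernel in \eqref{eq:convolutionregularize} preserves both weak positivity and $d$-closedness, so I reduce to the smooth case and pass to the limit at the end, using that $\Theta_{T\star\rho_{\epsilon}}(B(0,r))\to\Theta_{T}(B(0,r))$ for all but countably many $r$; an approximation argument then yields monotonicity at every $r$.

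Write $v(r)=\Theta_{T}(B(0,r))$. The first step is to express $v(r)$ as a surface integral over $\partial B(0,r)$. A direct computation on $\{|x|>0\}$ using $d^{\#}|x|^{2}=2|x|\,d^{\#}|x|$, $dd^{\#}|x|^{2}=2\,d|x|\wedge d^{\#}|x|+2|x|\,dd^{\#}|x|$, and the relations $(d^{\#}|x|)^{2}=(d|x|\wedge d^{\#}|x|)^{2}=0$ yields the pointwise identity
\[
d^{\#}|x|^{2}\wedge(dd^{\#}|x|^{2})^{n-p-1}=2^{n-p}|x|^{n-p}\,d^{\#}|x|\wedge(dd^{\#}|x|)^{n-p-1}.
\]
Applying Stokes' formula (Proposition~\ref{pro:stokes}) on $\{|x|<r\}$, together with $dT=0$ and $d(dd^{\#}\phi)=0$, then gives
\[
v(r)=\frac{r^{n-p}}{(n-p)!}\int_{\partial B(0,r)}T\wedge d^{\#}|x|\wedge(dd^{\#}|x|)^{n-p-1}.
\]
Hence $v(r)/r^{n-p}$ is, up to a positive constant, a surface integral whose integrand carries no remaining $r$-dependence.

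For $0<s<r$, a further application of Stokes on the annulus $\{s<|x|<r\}$, where $|x|$ is smooth, combined with a Leibniz expansion that collapses to a single term thanks to $dT=0$ and $d((dd^{\#}|x|)^{n-p-1})=0$, gives
\[
\frac{v(r)}{r^{n-p}}-\frac{v(s)}{s^{n-p}}=\frac{1}{(n-p)!}\int_{\{s<|x|<r\}}T\wedge(dd^{\#}|x|)^{n-p}.
\]
This is nonnegative: $dd^{\#}|x|$ is a positive $(1,1)$-form on $\{|x|>0\}$ (as $|x|$ is convex), hence strongly positive because positivity and strong positivity coincide in bi-degree $(1,1)$; wedge products of strongly positive forms remain strongly positive; and by the weak/strong duality the wedge of a weakly positive current with a strongly positive form is a positive measure.

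The main obstacle I anticipate is the careful derivation and sign bookkeeping in the pointwise algebraic identity above, plus the regularization-to-limit step: since $\Theta_{T\star\rho_{\epsilon}}(B(0,r))$ only agrees with $\Theta_{T}(B(0,r))$ at continuity points of $r\mapsto\Theta_{T}(\bar{B}(0,r))$, monotonicity has to be extended from this full-measure set of $r$ to every $r$, which is done using that an increasing function on a dense set extends to an increasing function on its closure.
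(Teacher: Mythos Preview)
Your argument is correct and follows essentially the same route as the paper: both reduce to the identity $\Theta_{T}(B(0,r))=c\,r^{n-p}\int_{B(0,r)\times\mathbb{R}^{n}}T\wedge(dd^{\#}|x|)^{n-p}$ via Stokes and the algebraic relation between $d^{\#}|x|^{2}\wedge(dd^{\#}|x|^{2})^{n-p-1}$ and $d^{\#}|x|\wedge(dd^{\#}|x|)^{n-p-1}$, and then conclude by weak/strong duality that the right-hand side is increasing in $r$. You are in fact more explicit than the paper about the tacit closedness hypothesis and the regularization needed to justify Stokes for a current, both of which the paper's proof uses without comment.
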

\begin{proof}
We can assume that $x=0$. By equation \eqref{eq:ddshaprabsolutevalueofx}
we have, \[
dd^{\#}|x|=\frac{dd^{\#}|x|^{2}}{2|x|}-\frac{d|x|^{2}\wedge d^{\#}|x|^{2}}{4|x|^{3}},\]
and $\frac{d|x|^{2}\wedge d^{\#}|x|^{2}}{4|x|^{3}}=0$ on the sphere
$|x|=r$. Moreover, $d^{\#}|x|=\frac{d^{\#}|x|^{2}}{2|x|}$, so that
$d^{\#}|x|=\frac{d^{\#}|x|^{2}}{2r}$ if $|x|=r$. Combining these
observations, we find that \[
\int_{\{|x|=r\}\times\mathbb{R}^{n}}T\wedge d^{\#}|x|\wedge(dd^{\#}|x|)^{n-p-1}=\int_{\{|x|=r\}\times\mathbb{R}^{n}}T\wedge\frac{d^{\#}|x|^{2}}{2|x|}\wedge(\frac{dd^{\#}|x|^{2}}{2|x|})^{n-p-1}=\]
\[
=\frac{{1}}{(2r)^{n-p}}\int_{\{|x|=r\}\times\mathbb{R}^{n}}T\wedge d^{\#}|x|^{2}\wedge(dd^{\#}|x|^{2})^{n-p-1}.\]
Thus, by Stokes' theorem we obtain \[
\int_{B(0,r)\times\mathbb{R}^{n}}T\wedge(dd^{\#}|x|^{2})^{n-p}=\int_{\{|x|=r\}\times\mathbb{R}^{n}}T\wedge d^{\#}|x|^{2}\wedge(dd^{\#}|x|^{2})^{n-p-1}=\]
\[
=(2r)^{n-p}\int_{\{|x|=r\}\times\mathbb{R}^{n}}T\wedge d^{\#}|x|\wedge(dd^{\#}|x|)^{n-p-1}=\]
\[
=(2r)^{n-p}\int_{B(0,r)\times\mathbb{R}^{n}}T\wedge(dd^{\#}|x|)^{n-p}.\]
Since $T$ is weakly positive, $T\wedge(dd^{\#}|x|)^{n-p}$ is a positive
measure. Thus the function \[
r\mapsto\frac{{\Theta_{T}}}{r^{n-p}}(B(0,r))\]
is increasing. \end{proof}
\begin{cor}
Let $T$ be a clo\label{cor:supportthm}sed, positive $(p,p)-$current.
If $K$ is a compact set with vanishing $(n-p)$-dimensional Hausdorff
measure, then $T$ vanishes on $K$. In particular, if $Supp(T)$
has vanishing $(n-p)$-dimensional Hausdorff measure, then $T=0.$\end{cor}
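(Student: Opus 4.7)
The plan is to reduce the problem to showing that the trace measure $\Theta_T$ vanishes on $K$, which by Proposition \ref{pro:tracemeasureinequality} implies that each coefficient $T_{IJ}$ vanishes on $K$, and therefore that $T$ itself vanishes on $K$.

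First, I would use Proposition \ref{pro:increasingquotient} to obtain a uniform upper bound of the form
\[
\Theta_T(B(x,r)) \leq C \cdot r^{n-p}
\]
for all $x$ in some fixed compact neighborhood $U$ of $K$ and all $r \leq R_0$, with $R_0$ chosen small enough that $B(x,R_0) \subset U'$ for some larger relatively compact $U' \supset U$. Indeed, monotonicity of $r \mapsto \Theta_T(B(x,r))/r^{n-p}$ gives $\Theta_T(B(x,r)) \leq (r/R_0)^{n-p} \Theta_T(B(x,R_0))$, and the right-hand side is uniformly bounded in $x \in U$ since $T$ (and hence $\Theta_T$, being a positive measure by Proposition \ref{pro:tracemeasureinequality}) is locally finite.

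Next, I would invoke the definition of vanishing $(n-p)$-dimensional Hausdorff measure: for every $\varepsilon > 0$, there is a cover of $K$ by sets of diameter at most $R_0$ (hence by balls $B(x_i,r_i)$ with $x_i \in K$ and $r_i \leq R_0$) with $\sum_i r_i^{n-p} < \varepsilon$. Applying the uniform estimate above gives
\[
\Theta_T(K) \leq \sum_i \Theta_T(B(x_i,r_i)) \leq C \sum_i r_i^{n-p} < C\varepsilon,
\]
and letting $\varepsilon \to 0$ yields $\Theta_T(K) = 0$. By the trace measure inequality this forces $|T_{IJ}|(K) = 0$ for every pair of multi-indices, so $T$ vanishes on $K$. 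For the final assertion, if $\mathrm{Supp}(T)$ has vanishing $(n-p)$-dimensional Hausdorff measure, I would apply the first part to each compact subset $K_m = \mathrm{Supp}(T) \cap \overline{B(0,m)}$; since each $K_m$ has vanishing Hausdorff measure as well, $T$ vanishes on every compact subset of $\mathrm{Supp}(T)$, hence $T=0$.

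The main obstacle is the uniform estimate on $\Theta_T(B(x,r))$ for $x$ varying in a compact set: Proposition \ref{pro:increasingquotient} is stated pointwise in $x$, so I need to verify that the monotonicity constant can be taken uniformly, which boils down to the local finiteness of $\Theta_T$ on a slightly larger compact set. Everything else is a standard covering argument.
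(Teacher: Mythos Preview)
Your proposal is correct and follows essentially the same approach as the paper: use the monotonicity of $r\mapsto\Theta_T(B(x,r))/r^{n-p}$ from Proposition~\ref{pro:increasingquotient} to obtain a uniform density bound, feed this into the standard Hausdorff-measure covering argument to get $\Theta_T(K)=0$, and then apply the trace-measure inequality of Proposition~\ref{pro:tracemeasureinequality} to conclude. Your treatment of the uniformity issue (fixing $R_0$ and a larger compact $U'$ in advance) is in fact slightly more explicit than the paper's, which implicitly relies on the covering balls staying within a fixed neighborhood of $K$.
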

\begin{proof}
Let $K$ be a compact set, satisfying the assumptions of the hypothesis.
The condition $\mathcal{H}^{n-p}(K)=0$, means that we can, for every
$\epsilon>0,$ find a finite number of balls $B(a_{j},r_{j})$ for
which $K\subset\cup_{j}B(a_{j},r_{j})$ and \[
\sum r_{j}^{n-p}\leq\epsilon.\]
We can assume that each $r_{j}\leq1.$ By Proposition \ref{pro:increasingquotient}
we see that \[
\frac{{\Theta_{T}}}{r_{j}^{n-p}}(B(a_{j},r_{j}))\leq\Theta_{T}(B(a_{j},1))\leq\Theta_{T}(K^{'}),\]
 where $K^{'}$ is a compact set such that $K\subset\cup_{j}B(a_{j},1)\subset K^{'}$
and thus, with $C=\Theta_{T}(K^{'})$ we obtain the inequality: $\Theta_{T}(B(a_{j},r_{j}))\leq Cr_{j}^{n-p}$
for all $j.$ We conclude that \[
\Theta_{T}(K)\leq\sum_{j}\Theta_{T}((B(a_{j},r_{j}))\leq C\sum_{j}r_{j}^{n-p}\leq C\epsilon,\]
 and thus, $T_{|K}=0$, since $|T_{IJ}|(K)\leq\Theta_{T}(K)$ by Proposition
\ref{pro:tracemeasureinequality}.
\end{proof}
As a consequence of the proposition, we can define the \emph{Lelong
number} of a weakly positive, closed $(p,p)-$current $T$ at a point
$x$ by \[
\nu_{x}(T)=\lim_{r\rightarrow0}\frac{{\Theta_{T}(B(x,r))}}{Vol(B^{n-p})r^{n-p}},\]
where $Vol(B^{n-p})$ is the volume of the $(n-p)-$dimensional unit
ball. We define the Lelong number of a convex function $f:\mathbb{R}^{n}\rightarrow\mathbb{R}$
by \[
\nu_{x}(f)=\nu_{x}(dd^{\#}f).\]

\begin{example}
We calculate the Lelong number of the function $x\mapsto|x|$. We
begin with considering the behaviour at the origin. By \eqref{eq:ddshaprabsolutevalueofx}
and Stokes', \[
\int_{B(0,r)\times\mathbb{R}^{n}}dd^{\#}|x|\wedge(dd^{\#}|x|^{2})^{n-1}=\]
\[
=\frac{1}{2r}\int_{B(0,r)\times\mathbb{R}^{n}}(dd^{\#}|x|^{2})^{n}=2^{n-1}\cdot n!\cdot r^{n-1}\cdot Vol_{n}(B(0,1)),\]
and thus\[
\nu_{0}(|x|)=\lim_{r\rightarrow0}\frac{\Theta(B(0,r))}{Vol_{n-1}(B(0,r))}=n\cdot\frac{Vol_{n}(B(0,1))}{Vol_{n-1}(B(0,1))}.\]
On the other hand, at a point $x_{0}$ away from the origin our function
is smooth, and so the form $dd^{\#}|x|$ is locally smooth. But if
$g$ is a smooth function in a neighbourhood around $x$, then the
trace measure of $dd^{\#}g(x)$ is just the Laplacian of $g$ at $x$
and thus there is a constant $C>0$ such that \[
\Theta_{dd^{\#}g}(B(x,r))\leq Cr^{n},\]
 since every coefficient $\frac{\partial^{2}g}{\partial x_{i}\partial x_{j}}$
is uniformly bounded in some neighbourhood around $x$. Thus\[
\nu_{x}(g)\leq\lim_{r\rightarrow0}\frac{Cr^{n}}{r^{n-1}Vol_{n-1}(B(0,1))}=0,\]

and so $\nu_{x_{0}}(|x|)=0$ if $x_{0}\neq0$. 
\end{example}
The above argument displays the following expected behaviour of the
Lelong number, showing that the Lelong number is a measurement of
the singularity at a point:
\begin{prop}
If a $(1,1)-$current $T$ is locally smooth around a point $x,$
then $\nu(T,x)=0$. \end{prop}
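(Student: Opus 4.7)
The plan is to mimic exactly the argument that appeared in the preceding example for the function $|x|$ away from the origin, since the computation there was purely local and only used that the coefficients of $dd^{\#}|x|$ are smooth near the point $x_0 \neq 0$.

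First I would fix a neighborhood $U$ of $x$ on which $T$ has smooth coefficients, and write $T = \sum_{i,j} T_{ij}(y)\, dy_i \wedge d\eta_j$ with $T_{ij}$ smooth on $U$. By definition
\[
\Theta_T(B(x,r)) = \frac{1}{2^{n-1}(n-1)!} \int_{B(x,r)\times\mathbb{R}^n} T \wedge (dd^{\#}|y|^{2})^{n-1}.
\]
Expanding $(dd^{\#}|y|^{2})^{n-1}$ in coordinates, one sees that the integrand is, after integrating out the $\eta$-variables via our convention $\int_{\mathbb{R}^n} d\eta_1 \wedge \cdots \wedge d\eta_n = 1$, a linear combination of the coefficients $T_{ij}(y)$ integrated against Lebesgue measure on $B(x,r)$.

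The second step is to use that on the closure of a slightly smaller neighborhood around $x$ the coefficients $T_{ij}$ are uniformly bounded by some constant $M$. Choosing $r$ small enough that $B(x,r)$ lies inside this neighborhood, we obtain
\[
\Theta_T(B(x,r)) \leq C \cdot M \cdot \mathrm{Vol}_n(B(x,r)) = C' r^n,
\]
for some constant $C'$ independent of $r$ (but depending on $x$ and on the local sup-norm of the $T_{ij}$).

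Finally, dividing by $\mathrm{Vol}(B^{n-1}) r^{n-1}$ and letting $r \to 0$,
\[
\nu_x(T) = \lim_{r \to 0} \frac{\Theta_T(B(x,r))}{\mathrm{Vol}(B^{n-1}) r^{n-1}} \leq \lim_{r \to 0} \frac{C' r^n}{\mathrm{Vol}(B^{n-1})\, r^{n-1}} = 0,
\]
and since $\Theta_T$ is a nonnegative measure (by Proposition \ref{pro:tracemeasureinequality}, as $T$ is positive) the limit is in fact $0$. There is essentially no obstacle here: the only subtlety is making sure the computation in the example is recognized as general, i.e.\ that the bound $\Theta_T(B(x,r)) = O(r^n)$ is a purely local consequence of smoothness of the coefficients, and does not use the specific form of the potential $|x|$.
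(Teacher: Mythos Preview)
Your proposal is correct and follows exactly the paper's own approach: the paper does not give a separate proof but simply remarks that ``the above argument displays the following expected behaviour,'' referring to the computation in the preceding example showing $\Theta_T(B(x,r))\leq C r^n$ from local boundedness of the smooth coefficients. Your write-up makes this explicit and adds nothing essentially new.
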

\begin{rem}
In complex analysis, it is a well known theorem due to Y.T. Siu, which
states that the set $\{x:\nu(T,x)\geq c\}$ constitutes an analytic
variety, for each $c>0$ (in this remark, $\nu(T,x)$ denotes the
complex version of the Lelong number). It would be interesting to
know if there is a corresponding result in our setting. One could
perhaps hope that one would obtain tropical varieties (which we define
in Section \ref{sec:Tropical-geometry}), but this is not the case
as the example $f=\max(|x|,1)$ shows: $\{x\in\mathbb{V}:\nu(dd^{\#}\max(|x|,1),x)\geq c\}=\{x:|x|=1\}$
which is not a tropical variety.
\end{rem}

\subsection{Push forwards of currents}

Let $f:\mathbb{V}\rightarrow\mathbb{V}^{'}$ be an affine map, with
$dim(\mathbb{V})=dim(\mathbb{V}^{'})=n$, inducing a map $\tilde{f}:\mathbb{E}\rightarrow\mathbb{E}^{'}$.
Then we can define the push-forward $f_{*}T$ of a current $T\in\mathcal{D}_{n-p,n-p}(\mathbb{E})$
via the formula \begin{equation}
\left\langle f_{*}T,\alpha\right\rangle =\left\langle T,f^{*}\alpha\right\rangle ,\label{eq:pushforwardpullback}\end{equation}
where $\alpha\in\mathcal{D}^{p,p}$. This formula only makes sense
if $f^{*}\alpha$ has compact support on $Supp(T)$, and so we first
demand that $f$ is such that $f^{-1}(K)\bigcap Supp(T)$ is compact
for every compact $K\subset\mathbb{V}',$ or in other words, the restriction
of $f$ to $Supp(T)$ is proper. For such $f$ the induced map $f^{*}:\mathcal{E}^{p,p}(\mathbb{E}')\rightarrow\mathcal{E}^{p,p}(\mathbb{E})$
is continuous and thus the above formula defines an element in $\mathcal{D}_{n-p,n-p}(\mathbb{E}')$
. If $T$ is weakly positive, then \[
\left\langle f_{*}T,\alpha_{1}\wedge J(\alpha_{1})\wedge...\wedge\alpha_{p}\wedge J(\alpha_{p})\right\rangle =\]
\[
=\left\langle T,f^{*}\alpha_{1}\wedge J(f^{*}\alpha_{1})\wedge....\wedge f^{*}\alpha_{p}\wedge J(f^{*}\alpha_{p})\right\rangle \geq0.\]
Thus we have the following proposition.
\begin{prop}
If \textup{$T\in\mathcal{D}_{n-p,n-p}(\mathbb{E})$ is weakly positive
and $f:\mathbb{V}\rightarrow\mathbb{V}'$ is an (non-constant) affine
function, then $\tilde{f}_{*}T$ is a weakly positive current in $\mathcal{D}_{n-p,n-p}(\mathbb{E}')$.}\end{prop}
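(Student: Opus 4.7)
The plan is to verify the definition of weak positivity for $\tilde{f}_{*}T$ directly, using the push-forward formula \eqref{eq:pushforwardpullback}. Concretely, I want to show that $\langle \tilde{f}_{*}T,\beta\rangle\geq 0$ for every smooth, compactly supported, strongly positive $(p,p)$-form $\beta$ on $\mathbb{E}'$. By \eqref{eq:pushforwardpullback} this equals $\langle T,\tilde{f}^{*}\beta\rangle$, and the pairing is well-defined because $\tilde{f}|_{Supp(T)}$ is proper, so $\tilde{f}^{*}\beta$ has compact support on $Supp(T)$.

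The crucial step is to check that $\tilde{f}^{*}\beta$ is itself strongly positive on $\mathbb{E}$. Writing
\[
\beta=\sum_{s}c_{s}\,\alpha_{1,s}\wedge J(\alpha_{1,s})\wedge\cdots\wedge\alpha_{p,s}\wedge J(\alpha_{p,s})
\]
with $c_{s}\geq 0$ and each $\alpha_{j,s}$ a $(1,0)$-form on $\mathbb{E}'$, I would invoke two facts that have already been noted in the setup: (i) since $f$ is affine, $\tilde{f}^{*}$ preserves bi-degree and in particular sends $(1,0)$-forms on $\mathbb{E}'$ to $(1,0)$-forms on $\mathbb{E}$; (ii) the pull-back commutes with $J$. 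Applying $\tilde{f}^{*}$ to the decomposition then yields
\[
\tilde{f}^{*}\beta=\sum_{s}(c_{s}\circ f)\cdot\tilde{f}^{*}\alpha_{1,s}\wedge J(\tilde{f}^{*}\alpha_{1,s})\wedge\cdots\wedge\tilde{f}^{*}\alpha_{p,s}\wedge J(\tilde{f}^{*}\alpha_{p,s}),
\]
and since $c_{s}\circ f\geq 0$, this exhibits $\tilde{f}^{*}\beta$ as strongly positive. The weak positivity of $T$ then gives $\langle T,\tilde{f}^{*}\beta\rangle\geq 0$, which is exactly what is needed.

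The only place requiring a little care is that a general smooth strongly positive form $\beta$ with compact support may not admit a single global decomposition of the type above with smooth non-negative coefficients $c_{s}$ and smooth $(1,0)$-forms $\alpha_{j,s}$; I would handle this by writing $\beta$ as a locally finite sum of such pieces via a partition of unity subordinate to a cover on which a pointwise decomposition extends smoothly, after which the linearity of the above argument finishes the proof. There is no real obstacle here — the proposition essentially reduces to the compatibility $\tilde{f}\circ J=J'\circ\tilde{f}$ built into the definition of $\tilde{f}$, combined with the push-forward formula.
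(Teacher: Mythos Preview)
Your argument is correct and is essentially the same as the paper's: the paper simply tests $\tilde f_{*}T$ against a form $\alpha_{1}\wedge J(\alpha_{1})\wedge\cdots\wedge\alpha_{p}\wedge J(\alpha_{p})$, applies the push-forward formula, and uses $\tilde f^{*}\circ J=J\circ\tilde f^{*}$ to conclude. Your partition-of-unity remark is unnecessary, since it suffices to test against these simple strongly positive forms (they generate the cone), which is exactly what the paper does.
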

\begin{example}
Every form $\beta\in\mathcal{E}^{p,p}$ can be considered as a current
acting on compactly supported forms of complementary degree. Now,
if $f:\mathbb{V}\rightarrow\mathbb{V}$ is a non-singular affine map,
then \[
\left\langle \alpha,f_{*}\beta\right\rangle =\left\langle f^{*}\alpha,\beta\right\rangle =\int_{\mathbb{E}}f^{*}\alpha\wedge\beta=\frac{1}{|det(f^{-1})|}\int_{\mathbb{E}}(f^{-1})^{*}(f^{*}\alpha\wedge\beta)=\]
\[
=|det(f)|\int_{\mathbb{E}}\alpha\wedge(f^{-1})^{*}\beta=|det(f)|\left\langle \alpha,(f^{-1})^{*}\beta\right\rangle ,\]
where we used formula \eqref{eq:variablechangeformula} in the third
equality. Thus we see that, \begin{equation}
f_{*}\beta=|detf|(f^{-1})^{*}\beta,\beta\in\mathcal{E}^{p,p}.\label{eq:pullbackpushforwardchangeofvariable}\end{equation}

\end{example}
Now let us consider the projection (here we write $\mathbb{V},\mathbb{W}=\mathbb{R}^{n}$)
\[
\pi:\mathbb{R}^{n}\rightarrow\mathbb{R}^{n-k},\pi(x_{1},...,x_{n})=(x_{1},...,x_{n-k}),\]
where $k\geq0$ and take a ($p,p)-$form $\alpha$ on $\mathbb{R}^{n}$,
with locally integrable coefficients, which we will consider as a
current. Assume this form $\alpha$ is such that $\pi$ is proper
on its support, and let $\omega$ be a $(n-p,n-p)-$form on $\mathbb{R}^{n-k}=\{(x_{1},...,x_{n-k})\}$
. Observe that if $n-p>n-k,$ the form $\omega$ is $0$, so we assume
this is not the case. We regard, for each $x\in\mathbb{R}^{n-k}$,
the set $\pi^{-1}(x)$ as $\mathbb{R}^{k}$ with coordinates $(x_{n-k+1},...,x_{n})$.
Thus $\omega$ only contains differentials $dx_{i}$ and $d\xi_{i}$
with $1\leq i\leq n-k$, and $\pi^{*}\omega=\omega$. Let us write
$\alpha=\sum_{|I|=|J|=p}\alpha_{IJ}dx_{I}\wedge d\xi_{J}$, and $\omega=\sum_{|K|=|L|=n-p}\omega_{KL}dx_{K}\wedge d\xi_{L}$
where each $K$ and $L$ only contain indices between $1$ and $n-k$.
The $(n,n)-$form $\alpha\wedge\pi^{*}\omega$ is a sum of terms of
the form $\alpha_{IJ}\omega_{KL}dx_{I}\wedge d\xi_{J}\wedge dx_{K}\wedge d\xi_{L}$
and such a term vanishes if $I$ and $K$, or if $J$ and $L$, contain
the same indices. Since $n-p\leq n-k$, this implies that the only
terms in the expression defining $\alpha$ that will contribute to
the push-forward are those for which $dx_{I}\wedge d\xi_{J}$ contains
the differential $dx_{n-k+1}\wedge...\wedge dx_{n}\wedge d\xi_{n-k+1}\wedge...\wedge d\xi_{n}$.
In effect, we can write $\alpha$ as $\alpha=\left\{ \sum_{|I'|=|J'|=p-k}\alpha_{I'J'}dx_{I'}\wedge d\xi_{J'}\right\} \wedge dx_{n-k+1}\wedge...\wedge dx_{n}\wedge d\xi_{n-k+1}\wedge...\wedge d\xi_{n}+R$,
where $R$ is such that $R\wedge\omega'=0$ for every $(n-p,n-p)$-form
$\omega'$ on $\mathbb{R}^{n-k}$, and hence will not contribute to
the push forward. The definition of push-forward tells us that\[
\left\langle \pi_{*}\alpha,\omega\right\rangle =\left\langle \alpha,\pi^{*}\omega\right\rangle =\]
\[
=\int_{\mathbb{W}}\int_{(x_{1},...,x_{n-k})\in\mathbb{R}^{n-k}}\int_{(x_{n-k+1},...,x_{n})\in\pi^{-1}(x)}\alpha(x_{1},...,x_{n})\wedge\omega(x_{1},...,x_{n-k}),\]
and from this we deduce that the push forward of $\alpha$ under $\pi$
is given by the $(p-k,p-k)$-form\[
(\pi_{*}\alpha)(x_{1},...,x_{n-k})=\]
\[
=\int_{\pi^{-1}(x_{1},...,x_{n-k})\times\mathbb{W}}\left\{ \sum_{|I'|=|J'|=p-k}\alpha_{I'J'}dx_{I'}\wedge d\xi_{J'}\right\} \wedge dx_{n-k+1}\wedge...\wedge d\xi_{n}=\]
\[
=\sum_{|I'|=|J'|=p-k}\tilde{\alpha}_{I'J'}(x_{1},...,x_{n-k})dx_{I'}\wedge d\xi_{J'},\]
where \begin{equation}
\tilde{\alpha}_{I'J'}(x_{1},...,x_{n-k})=\int_{\pi^{-1}(x_{1},...,x_{n-k})}\alpha_{I'J'}(x_{1},...,x_{n})dV_{k}(x_{n-k+1},...,x_{n})\label{eq:pushforwardintergraltoconverge}\end{equation}
and $dV_{k}$ is the volume measure induced by the chosen inner product
on $\mathbb{R}^{n}$. We have hence obtained an explicit formula for
the push-forward of a form $\alpha$ under a projection. By using
an approximation argument, the discussion above still holds true if
we assume $\alpha$ to be a strongly positive \emph{current}. Now,
since $\pi^{*}d\omega=d\pi^{*}\omega$, we see that \[
\left\langle d\pi_{*}\alpha,\omega\right\rangle =\pm\left\langle \pi_{*}\alpha,d\omega\right\rangle =\pm\left\langle \alpha,\pi^{*}d\omega\right\rangle =\pm\left\langle \pi_{*}\alpha,d\omega\right\rangle =\left\langle \pi_{*}d\alpha,\omega\right\rangle ,\]
which implies that $\pi_{*}d\alpha=d\pi_{*}\alpha$. Thus, if $\alpha$
is closed, then $\pi_{*}\alpha$ is closed as well. Since strong positivity
of forms is preserved under pullbacks, we find that if $\alpha$ is
a weakly positive form, then $\pi_{*}\alpha$ is weakly positive.
Moreover, the formula \[
\left\langle f_{*}\alpha,\sigma_{(n-k)-(p-k)}\beta\wedge J(\beta)\right\rangle =\left\langle \alpha,\sigma_{n-p}f^{*}\beta\wedge J(f^{*}\beta)\right\rangle ,\]
tells us that if $\alpha$ is positive, then so is $f_{*}\alpha$.
Thus we have:
\begin{prop}
\label{pro:pushforwardpropersupport}Let $\pi$ be a projection from
$\mathbb{R}^{n}$ onto a $(n-k)$-dimensional subspace, and let $\alpha$
be a (weakly) positive $(p,p)$-current on $\mathbb{R}^{n}\times\mathbb{R}^{n}$
such that $\pi$ is proper on the support of $\alpha$. Then the push-forward
$\pi_{*}\alpha$ is a well-defined $(p-k,p-k)$-current. Moreover,
$\pi_{*}\alpha$ is (weakly) positive, and if $\alpha$ is closed
then $\pi_{*}\alpha$ is closed as well. 
\end{prop}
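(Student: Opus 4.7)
The plan is to verify three things in turn: well-definedness of $\pi_{*}\alpha$, closedness when $\alpha$ is closed, and preservation of positivity, with all three steps leaning on properness of $\pi$ on $\operatorname{Supp}(\alpha)$.

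For well-definedness, I would fix a test form $\omega\in\mathcal{D}^{n-p,n-p}(\mathbb{E}')$ and observe that, by properness, the set $\pi^{-1}(\operatorname{Supp}\omega)\cap\operatorname{Supp}(\alpha)$ is compact. Choosing any non-negative smooth cutoff $\chi$ of compact support equal to $1$ on a neighbourhood of this set, I would set $\langle\pi_{*}\alpha,\omega\rangle:=\langle\alpha,\chi\,\pi^{*}\omega\rangle$. Independence of the choice of $\chi$ is immediate, since any two valid cutoffs differ by a function vanishing on $\operatorname{Supp}(\alpha)\cap\operatorname{Supp}(\pi^{*}\omega)$. This is precisely the abstract formulation of the fibre-integration formula \eqref{eq:pushforwardintergraltoconverge} derived in the discussion preceding the statement, and that formula exhibits $\pi_{*}\alpha$ as a $(p-k,p-k)$-current on $\mathbb{E}'$; the fibre integrals converge in the sense of measures because the coefficients of the positive current $\alpha$ are signed measures by Proposition \ref{pro:tracemeasureinequality-1}.

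Closedness is essentially formal: affine pullback commutes with $d$, so
$$\langle d\pi_{*}\alpha,\omega\rangle=\pm\langle\alpha,\chi\,\pi^{*}d\omega\rangle=\langle\pi_{*}d\alpha,\omega\rangle,$$
where contributions from $d\chi$ disappear after passing to a cutoff equal to $1$ on a large enough neighbourhood of $\operatorname{Supp}(\alpha)\cap\pi^{-1}(\operatorname{Supp}\omega)$, since $d\omega$ is still compactly supported. Hence $d\alpha=0$ forces $d\pi_{*}\alpha=0$.

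For positivity, the decisive observation is that affine pullback commutes with $J$, and so sends the generators of strong positivity to generators of strong positivity: $\pi^{*}(\gamma_{1}\wedge J(\gamma_{1})\wedge\dots\wedge\gamma_{n-p}\wedge J(\gamma_{n-p}))=(\pi^{*}\gamma_{1})\wedge J(\pi^{*}\gamma_{1})\wedge\dots\wedge(\pi^{*}\gamma_{n-p})\wedge J(\pi^{*}\gamma_{n-p})$. To propagate weak positivity, I would test $\pi_{*}\alpha$ against a strongly positive $(n-p,n-p)$-form $\omega$, choosing the cutoff as $\chi=\chi_{1}^{2}$ with $\chi_{1}\geq 0$ smooth; then $\chi\,\pi^{*}\omega$ is again strongly positive and compactly supported, so $\langle\pi_{*}\alpha,\omega\rangle=\langle\alpha,\chi\,\pi^{*}\omega\rangle\geq 0$ by weak positivity of $\alpha$. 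The same argument with $\omega=\sigma_{n-p}\beta\wedge J(\beta)$ handles ordinary positivity, using $\chi_{1}^{2}\,\pi^{*}\beta\wedge J(\pi^{*}\beta)=(\chi_{1}\pi^{*}\beta)\wedge J(\chi_{1}\pi^{*}\beta)$. I expect the main subtlety to be precisely this cutoff step: a generic multiplicative cutoff would spoil the $\beta\wedge J(\beta)$ structure required by the definition of positivity, and writing $\chi$ as a perfect square is what preserves it and makes the chain of inequalities go through.
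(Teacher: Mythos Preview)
Your proof is correct and follows essentially the same route as the paper: well-definedness via the duality formula \eqref{eq:pushforwardpullback}, closedness from $\pi^{*}d=d\pi^{*}$, and positivity from the fact that affine pullback commutes with $J$ and hence preserves the generators of strong positivity. Your explicit cutoff $\chi=\chi_{1}^{2}$ is a careful way of making sense of the pairing $\langle\alpha,\pi^{*}\omega\rangle$ when $\pi^{*}\omega$ is only compactly supported on $\operatorname{Supp}(\alpha)$; the paper handles this more informally by invoking properness directly, so the perfect-square trick is needed precisely because of your added rigor, not because of any genuine difference in strategy.
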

Now let $f_{1},f_{2},...,f_{p}\in\mathcal{L}$ and let $S=dd^{\#}f_{1}\wedge...\wedge dd^{\#}f_{p}$.
Let $\pi$ be a projection from $\mathbb{R}^{n}$ onto a $(n-k)$-dimensional
subspace $L$ as above. Since the restriction of $\pi$ to $Supp(S)$
might not be proper, the expression $\pi_{*}S$ has no meaning as
of yet. However, if $\chi$ is a continuous function with compact
support on $\pi^{-1}(L)$ and with values in $[0,1]$, we can consider
the positive (but not closed) current $\pi_{*}\chi S$ on $L$. We
have the following lemma:
\begin{lem}
\label{lem:pushforwardlemma}For every compact subset $K\subset L$
there exists a constant $C_{K}>0$ which is independent of $\chi$,
such that the measure-coefficients of $\pi_{*}\chi S$ applied to
$K$ are bounded by $C_{K}$ .\end{lem}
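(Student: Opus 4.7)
The plan is to reduce the desired bound to one on the trace measure of $\pi_{*}(\chi S)$, and then to dominate that trace by a finite mixed Monge--Amp\`ere mass coming from the Lelong class assumption.

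First I would observe that since $\chi\geq 0$ and $S=dd^{\#}f_{1}\wedge\cdots\wedge dd^{\#}f_{p}$ is a positive current, the product $\chi S$ is positive and hence $\pi_{*}(\chi S)$ is a symmetric positive $(p-k,p-k)$-current on $L$. By Proposition \ref{pro:tracemeasureinequality-1} applied to $\pi_{*}(\chi S)$, it is enough to produce a bound on the trace measure $\Theta_{\pi_{*}(\chi S)}(K)$ independent of $\chi$. Setting $\omega_{b}:=\sum_{j=1}^{n-k}dx_{j}\wedge d\xi_{j}$, one has $\pi^{*}(dd^{\#}|y|^{2})=2\omega_{b}$, so the definition of the pushforward, combined with $0\leq\chi\leq 1$, reduces matters to showing
\[
\int_{\pi^{-1}(K)\times\mathbb{R}^{n}}S\wedge\omega_{b}^{n-p}\;\leq\;C_{K}
\]
with a constant depending only on $K$.

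The key step is a Monge--Amp\`ere comparison. I would pick $R$ with $K\subset\{|y|\leq R\}$ and introduce the auxiliary convex function
\[
g(x):=\sum_{j=1}^{n-k}\sqrt{1+x_{j}^{2}},
\]
which is smooth and satisfies $g(x)\leq (n-k)+\sqrt{n-k}\,|x|$, so that $g\in\mathcal{L}$. A short computation gives $dd^{\#}g=\sum_{j=1}^{n-k}(1+x_{j}^{2})^{-3/2}\,dx_{j}\wedge d\xi_{j}$, hence $dd^{\#}g\geq c_{R}\omega_{b}$ on $\pi^{-1}(K)$ with $c_{R}=(1+R^{2})^{-3/2}$. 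Writing $dd^{\#}g=c_{R}\omega_{b}+(dd^{\#}g-c_{R}\omega_{b})$ and expanding the $(n-p)$-fold wedge in strongly positive factors yields $(dd^{\#}g)^{n-p}\geq c_{R}^{n-p}\omega_{b}^{n-p}$ as strongly positive forms on $\pi^{-1}(K)$. Wedging with the strongly positive current $S$ then gives
\[
\int_{\pi^{-1}(K)\times\mathbb{R}^{n}}S\wedge\omega_{b}^{n-p}\;\leq\;c_{R}^{-(n-p)}\int_{\mathbb{R}^{n}\times\mathbb{R}^{n}}S\wedge(dd^{\#}g)^{n-p},
\]
and the right-hand side is the mixed Monge--Amp\`ere mass of the $n$-tuple $(f_{1},\ldots,f_{p},g,\ldots,g)$ of functions in $\mathcal{L}$ (with $g$ appearing $n-p$ times), which is finite by Proposition \ref{pro:intersectioninLelongclassgivesfinitemass}.

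The main obstacle, I expect, is the design of the auxiliary function $g$: it must lie in $\mathcal{L}$, so the natural quadratic choice $\tfrac{1}{2}|\pi(x)|^{2}$ (whose Monge--Amp\`ere equals $\omega_{b}$) is not admissible; yet $dd^{\#}g$ must dominate a positive multiple of $\omega_{b}$ on the unbounded cylinder $\pi^{-1}(K)=K\times\mathbb{R}^{k}$. The smoothed sum of absolute values above reconciles the two requirements because the pointwise lower bound for $dd^{\#}g$ only involves the base coordinates $x_{1},\ldots,x_{n-k}$, which are automatically bounded on $\pi^{-1}(K)$, while the overall linear growth in $|x|$ keeps $g$ in the Lelong class.
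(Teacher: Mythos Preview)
Your proof is correct and follows essentially the same route as the paper's: reduce to the trace measure via Proposition~\ref{pro:tracemeasureinequality-1}, then dominate $\omega_{b}$ on $\pi^{-1}(K)$ by $dd^{\#}g$ for some $g\in\mathcal{L}$ and invoke Proposition~\ref{pro:intersectioninLelongclassgivesfinitemass}. The only difference is cosmetic: the paper takes $g$ equal to $|x'|^{2}$ on a large ball and linear outside, so that $dd^{\#}g=\omega_{b}$ exactly on $\pi^{-1}(K)$, whereas you use the globally smooth $g(x)=\sum_{j=1}^{n-k}\sqrt{1+x_{j}^{2}}$ and absorb the harmless constant $c_{R}^{-(n-p)}$.
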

\begin{proof}
We choose coordinates so that $L=\mathbb{R}^{n-k}$ as above and write
$x'=(x_{1},...,x_{n-k})$. It is enough to prove that the statement
holds for every ball with center at the origin, in $\mathbb{R}^{n-k}$.
Fix such a ball $B(0,R)$. Define a function $\phi$ on $\mathbb{R}^{n-k}$
by letting $\phi(x')=|x'|^{2}$ for $x'\in B(0,2R)$, and $\phi(x')=4R|x'|-4R^{2}$
otherwise. Then $\phi\in\mathcal{L}$, and $dd^{\#}\phi=\sum_{i=1}^{n-k}dx_{i}\wedge d\xi_{i}$
on $B(0,R)$. Thus, $(dd^{\#}\phi)^{n-p}=(dd^{\#}|x'|^{2})^{n-p}$
on $B(0,R)$, which implies that\[
\Theta_{B(0,R)}(\pi_{*}\chi S)=\]
\[
=\int_{(B(0,R)\times\mathbb{R}^{k})\times\mathbb{R}^{n}}\chi S\wedge(dd^{c}|x'|^{2})^{n-p}=\int_{(B(0,R)\times\mathbb{R}^{k})\times\mathbb{R}^{n}}\chi S\wedge(dd^{c}\phi)^{n-p}\leq\]
\[
\leq\sup\chi\int_{\mathbb{R}^{n}\times\mathbb{R}^{n}}S\wedge(dd^{c}\phi)^{n-p}.\]
By Proposition \ref{pro:intersectioninLelongclassgivesfinitemass}
the right hand side is finite. Since $\pi_{*}\chi S$ is positive,
we can apply Proposition \ref{pro:tracemeasureinequality} to obtain
that every coefficient of $\pi_{*}\chi S$ when applied to $B(0,R)$
has mass bounded by the trace-measure of $\chi\pi_{*}S$ acting on
$B(0,R)$ and is consequently less than some constant $C_{R}>0$ depending
on $R$ (since $\phi$ depends on $R$) but not on $\chi$. The proposition
follows. 
\end{proof}
Assume that the functions $f_{i}$ are smooth. Then Lemma \ref{lem:pushforwardlemma}
together with \eqref{eq:pushforwardintergraltoconverge} tells us
that there is a constant $C>0$ such that for every positive, compactly
supported, continuous function $\chi$ defined on $\pi^{-1}(L)=\mathbb{R}^{k}$
with values in $[0,1]$, the following inequality holds: \[
|\int_{\pi^{-1}(x_{1},...,x_{n-k})}\chi(x_{n-k+1},...,x_{n})S_{I'J'}(x_{1},...,x_{n})dV_{k}(x_{n-k+1},...,x_{n})|\leq\]
\[
\leq C\cdot Sup(\chi).\]
 This implies that all of the integrals \[
\int_{\pi^{-1}(x_{1},...,x_{n-k})}S_{I'J'}(x_{1},...,x_{n})dV_{k}(x_{n-k+1},...,x_{n})\]
 converge. Thus, if we let $\chi_{R}$ be functions of the kind considered
with the additional assumption that their support should exhaust $\pi^{-1}(L)$
as $R\rightarrow\infty,$ we see that the weak limit of $\pi_{*}\chi_{R}S$
as $R\rightarrow\infty$ exists, and we put\[
\pi_{*}dd^{\#}f_{1}\wedge...\wedge dd^{\#}f_{p}=\lim_{R\rightarrow\infty}\pi_{*}\chi_{R}S.\]
It is easy to see that it does not depend on the choice of functions
$\chi_{R}$, and if $\pi$ were to have proper support on $dd^{\#}f_{1}\wedge...\wedge dd^{\#}f_{p}$,
this definition would coincide with the previous one given above .
We want to show that this current is closed and positive. For this
we construct explicit choices of $\chi_{R}$ as follows:

Let $\tilde{\chi}_{R}:\mathbb{R}_{+}\rightarrow\mathbb{R}$ be the
piecewise linear function, equal to 1 on $[0,R]$, equal to $0$ on
$[2R,+\infty)$ and linear in between. Then $\tilde{\chi}_{R}^{'}=R^{-1}$
on the interval $[R,2R]$ and 0 otherwise. We now put $\chi_{R}(x)=\tilde{\chi}_{R}(|x|)$. 
\begin{prop}
\label{pro:pushforwardofcurrentsinLelongclass}Let $\pi:\mathbb{R}^{n}\rightarrow L$
be a projection, where $L$ is a $(n-k)-$dimensional subspace of
$\mathbb{R}^{n}$, and assume that $f_{1},f_{2},...,f_{p}\in\mathcal{L}$.
Then \[
\pi_{*}dd^{\#}f_{1}\wedge...\wedge dd^{\#}f_{p}\]
is a well-defined positive, closed $(p-k,p-k)-$current. If $p<k$
then $\pi_{*}dd^{\#}f_{1}\wedge...\wedge dd^{\#}f_{p}=0$.\end{prop}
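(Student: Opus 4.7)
My plan is to prove the three properties---existence of the limit as a positive current, closedness, and the vanishing statement when $p<k$---by exploiting the explicit cut-off $\chi_R(x)=\tilde{\chi}_R(|x|)$ together with Lemma~\ref{lem:pushforwardlemma}. Since $\chi_R$ has compact support, $\pi$ is proper on $Supp(\chi_R S)$, and Proposition~\ref{pro:pushforwardpropersupport} gives that each $\pi_*(\chi_R S)$ is a positive $(p-k,p-k)$-current. Direct inspection of $\tilde{\chi}_R$ shows that $R\mapsto\chi_R(x)$ is non-decreasing for each fixed $x$, so $\pi_*((\chi_{R'}-\chi_R)S)$ is itself a positive current whenever $R'\geq R$. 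By Lemma~\ref{lem:pushforwardlemma} the coefficients of $\pi_*(\chi_R S)$ are uniformly bounded on compact subsets of $L$, and the diagonal coefficients are positive measures (Proposition~\ref{pro:tracemeasureinequality-1}) which converge by monotone convergence; the off-diagonal coefficients are controlled by the diagonals via the trace-measure inequality applied to the positive differences, so they converge as well. The limit $\pi_*S$ is therefore a well-defined $(p-k,p-k)$-current, and it is positive because positivity passes to limits.

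The main obstacle is closedness. Using $dS=0$, I have
\[
d\pi_*(\chi_R S)=\pi_*\,d(\chi_R S)=\pi_*(d\chi_R\wedge S),
\]
and by continuity of $d$ on currents, the left-hand side converges to $d\pi_*S$. It therefore suffices to show $\pi_*(d\chi_R\wedge S)\to 0$. The crucial observation is that $d\chi_R=\tilde{\chi}_R'(|x|)\,d|x|$ is supported on the annulus $\{R\leq|x|\leq 2R\}$ with $|\tilde{\chi}_R'|=1/R$ there, while $d|x|$ has uniformly bounded coefficients. Testing against a compactly supported form $\alpha$ on $L$ yields an estimate of the shape
\[
|\langle\pi_*(d\chi_R\wedge S),\alpha\rangle|\leq\frac{C_\alpha}{R}\sum_{I,J}\int_{\{R\leq|x|\leq 2R\}\cap\pi^{-1}(Supp\,\alpha)}|S_{IJ}|,
\]
where $C_\alpha$ controls the sup-norm of the coefficients of $\pi^*\alpha$. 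Applying Lemma~\ref{lem:pushforwardlemma} to a smooth $[0,1]$-valued cut-off supported in the annulus and dominating its indicator, each integral on the right is bounded by a constant depending only on $Supp\,\alpha$ and independent of $R$. Hence the whole expression is $O(1/R)$ and tends to zero, giving $d\pi_*S=0$.

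Finally, the vanishing when $p<k$ is forced by degree: $\pi_*(\chi_R S)$ is a $(p-k,p-k)$-current on the $(n-k)$-dimensional space $L$, which must vanish when $p-k<0$. More concretely, inspection of the fibre-integral formula for the pushforward shows that the integrand on each fibre cannot contain the required factor $dx_{n-k+1}\wedge\cdots\wedge d\xi_n$, so the fibre integrals vanish identically. The decisive step is thus the closedness estimate, which essentially reduces to balancing the $1/R$ decay of $d\chi_R$ against the uniform mass bound from Lemma~\ref{lem:pushforwardlemma}; the other assertions follow routinely from monotone convergence and degree considerations.
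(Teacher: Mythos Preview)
Your treatment of well-definedness, positivity, and the case $p<k$ is fine and matches the paper's approach. The gap is in the closedness argument.

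You write
\[
|\langle d\chi_R\wedge S,\pi^*\alpha\rangle|\leq\frac{C_\alpha}{R}\sum_{I,J}\int_{\{R\leq|x|\leq 2R\}\cap\pi^{-1}(Supp\,\alpha)}|S_{IJ}|
\]
and then appeal to Lemma~\ref{lem:pushforwardlemma} to bound each integral uniformly in $R$. But Lemma~\ref{lem:pushforwardlemma} bounds only the coefficients of $\pi_*(\chi S)$, i.e.\ those $S_{IJ}$ for which both $I$ and $J$ contain the full set of fibre indices $F=\{n-k+1,\dots,n\}$. The pairing $\langle d\chi_R\wedge S,\pi^*\alpha\rangle$, however, also involves $S_{IJ}$ with $J\supset F$ but $|I\cap F|=k-1$: these arise precisely when $d\chi_R$ contributes a differential $dx_j$ with $j\in F$. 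Such coefficients are \emph{not} controlled by Lemma~\ref{lem:pushforwardlemma}, and one cannot recover them from the trace-measure inequality on $S$ over $\mathbb{R}^n$ either, since the full trace $\sum_I S_{II}$ typically has infinite mass over the cylinder $\pi^{-1}(K)$ (take e.g.\ $n=2$, $k=p=1$, $f=|x|$: then $\int_{[-1,1]\times\mathbb{R}}S_{11}=+\infty$).

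The paper circumvents this by a Cauchy--Schwarz argument: writing $\alpha=v\wedge w$ with $v$ of type $(n-p-1,0)$ and $w$ of type $(0,n-p)$, one interprets $\langle d\chi_R\wedge S,\pi^*(v\wedge w)\rangle$ as the value $(d\chi_R\wedge\pi^*v,\,\pi^*w)$ of the positive bilinear form $(a,b)=\int S\wedge\sigma_{n-p}\,a\wedge J(b)$, and bounds it by
\[
\epsilon\,(\pi^*w,\pi^*w)+\epsilon^{-1}\,(d\chi_R\wedge\pi^*v,\,d\chi_R\wedge\pi^*v).
\]
The first term is exactly what Lemma~\ref{lem:pushforwardlemma} controls. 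For the second, the point is that $d\chi_R\wedge d^{\#}\chi_R\leq dd^{\#}\psi_R$ for an auxiliary convex $\psi_R\in\mathcal{L}$ supported on the annulus with $dd^{\#}\psi_R$ of size $R^{-2}$; together with Proposition~\ref{pro:intersectioninLelongclassgivesfinitemass} this gives a bound of order $\epsilon^{-1}R^{-2}$, which tends to zero. This extra step---replacing the naive $1/R$ estimate on $d\chi_R$ by a $1/R^{2}$ estimate on $d\chi_R\wedge d^{\#}\chi_R$ dominated by a closed positive form---is what your argument is missing.
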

\begin{proof}
We have yet to show that it is closed and positive. Let $S=dd^{\#}f_{1}\wedge...\wedge dd^{\#}f_{p}$
as above. Positivity is clear, since, if $\alpha\in\mathcal{D}^{n-p,0}$,
\[
\left\langle (\pi_{*}S),\sigma_{(n-k)-(p-k)}\alpha\wedge J(\alpha)\right\rangle =\lim_{R\rightarrow\infty}\int_{\mathbb{E}}\chi_{R}S\wedge\sigma_{n-p}\cdot\pi^{*}\alpha\wedge\pi^{*}J(\alpha)\geq0,\]
by the positivity of $S$. To prove closedness, we use the specific
function $\chi_{R}$ constructed above. Then, for $\alpha\in\mathcal{D}^{n-p-1,n-p}(L),$
\[
\left\langle d(\pi_{*}S),\alpha\right\rangle =\pm\lim_{R\rightarrow\infty}\left\langle \chi_{R}\cdot S,\pi^{*}(d\alpha)\right\rangle =\lim_{R\rightarrow\infty}\left\langle d\chi_{R}\wedge S,\pi^{*}\alpha\right\rangle \]
 thanks to $S$ being closed. We need to show that $\lim_{R\rightarrow\infty}\left\langle d\chi_{R}\wedge S,\pi^{*}\alpha\right\rangle =0$.
To this end we define the following bi-linear form: \[
(v,w)=\int_{\mathbb{E}}S\wedge\sigma_{n-p}v\wedge J(w),\]
where $v,w$ are compactly supported, smooth $(n-p,0)-$forms on $\mathbb{R}^{n}$.
Clearly $(v,v)\geq0$, since $S$ is positive, and thus the bi-linear
form is positive definite. A variant of the Cauchy-Schwartz inequality
tells us that for each $\epsilon>0$, \begin{equation}
(v,w)\leq\epsilon(v,v)+\epsilon^{-1}(w,w).\label{eq:CSinequality}\end{equation}
 Let us define, for each $R>0$, \[
\psi_{R}(t)=\begin{cases}
\frac{2}{R}t-\frac{3}{2}, & \text{if }t\in[0,R)\\
\frac{t^{2}}{2R^{2}}, & \text{if }t\in[R,2R]\\
\frac{4}{R}t-6, & \text{if }t\in(2R,\infty)\end{cases}.\]
Then $\psi_{R}$ is convex, $\psi_{R}(|x|)\in\mathcal{L}$, and $\psi_{R}^{''}(t)=1/R^{2}$
if $t\in[R,2R]$ and 0 otherwise. Moreover, a direct calculation shows
that \begin{equation}
dd^{\#}\psi_{R}-d\chi_{R}\wedge d^{\#}\chi_{R}\geq0.\label{eq:psirlargerthanchir}\end{equation}
 Let $v$ be a $(n-p-1,0)$-form, and $w$ a $(0,n-p)$-form defined
on $\mathbb{R}^{n-k},$ both smooth and with compact support. Then
\begin{equation}
|\left\langle d\chi_{R}\wedge S,\pi^{*}v\wedge\pi^{*}w\right\rangle |=|\int_{\mathbb{E}}S\wedge d\chi_{R}\wedge\pi^{*}v\wedge\pi^{*}w|=|(d\chi_{R}\wedge\pi^{*}v,\pi^{*}w)|\label{eq:innerproductintegral}\end{equation}
which by \eqref{eq:CSinequality} is dominated by \[
\epsilon\int_{\mathbb{E}}S\wedge\pi^{*}w\wedge J(\pi^{*}w)+\epsilon^{-1}\int_{\mathbb{E}}S\wedge d\chi_{R}\wedge\pi^{*}v\wedge J(d\chi_{R}\wedge\pi^{*}v)=I+II.\]
Here the form $\pi^{*}w$ will not have compact support on $\mathbb{R}^{n}$
so the first term actually has no meaning. However, we may here replace
$\pi^{*}w$ with $\chi_{3R}\pi^{*}w$ which has compact support on
$\mathbb{R}^{n}$; doing so will not affect \eqref{eq:innerproductintegral},
since $\chi_{3R}=1$ on $Supp(d\chi_{R})$. Using Lemma \ref{lem:pushforwardlemma}
we see that the first term, $I$, is bounded by $\epsilon C_{w}.$
For the second term, $II$, we show that the trace measure of the
strongly positive current, $S\wedge d\chi_{R}\wedge J(d\chi_{R})$
tends to $0$ as $R\rightarrow\infty:$ For each multi-index $I$
of length $n-p$, we can use the idea of Lemma \ref{lem:pushforwardlemma}
to find a function $\phi_{I}\in\mathcal{L},$ such that, \[
dd^{\#}\phi_{I}=dd^{\#}\sqrt{x_{i_{1}}^{2}+...+x_{i_{n-p}}^{2}}=dx_{I}\wedge d\xi_{I},\]
 on $B(0,2R)$. Thus, since $S\wedge d\chi_{R}\wedge J(d\chi_{R})$
is strongly positive, \[
|\int_{B(0,2R)\times\mathbb{R}^{n}}S\wedge d\chi_{R}\wedge J(d\chi_{R})\wedge dx_{I}\wedge d\xi_{I}|\leq|\int_{\mathbb{R}^{n}\times\mathbb{R}^{n}}S\wedge d\chi_{R}\wedge J(d\chi_{R})\wedge dd^{\#}\phi_{I}|.\]
By \eqref{eq:psirlargerthanchir} the last integral is dominated by
\[
|\int_{\mathbb{E}}S\wedge dd^{\#}\psi_{R}\wedge dd^{\#}\phi_{I}|.\]
Thus, since $dd^{\#}\psi_{R}(|x|)=R^{-2}dd^{\#}|x|$ on $B(0,R)$
and zero otherwise, we obtain by Proposition \ref{pro:intersectioninLelongclassgivesfinitemass}
that \[
\epsilon^{-1}|\int_{B(0,R)\times\mathbb{R}^{n}}S\wedge d\chi_{R}\wedge J(d\chi_{R})\wedge dx_{I}\wedge d\xi_{I}|\leq\]
\[
\leq\epsilon^{-1}R^{-2}\int_{\mathbb{E}}S\wedge dd^{\#}|x|\wedge dd^{\#}\phi_{I}\leq\epsilon^{-1}R^{-2}D,\]
for some constant $D>0$ independent of $R$. Thus the trace measure
of the positive current $S\wedge d\chi_{R}\wedge J(d\chi_{R})$ tends
to 0 with $R$. Thus we find that the second term, $II$, tends to
0 as $R\rightarrow\infty.$ In conclusion, we see that \[
\lim_{R\rightarrow\infty}\left\langle d\chi_{R}\wedge S,\pi^{*}(v\wedge w)\right\rangle =0,\]
for every pair of forms $v$ and $w$ as above. Since every $(n-p-1,n-p)-$form
$\alpha$ can be written as a linear combination of forms of the type
$v\wedge w$ as above, this implies that \[
\lim_{R\rightarrow\infty}\left\langle d\chi_{R}\wedge S,\pi^{*}(\alpha)\right\rangle =0.\]
By definition, this mean precisely that $d(\pi_{*}(dd^{\#}f_{1}\wedge...\wedge dd^{\#}f_{p}))=0,$
as desired. 
\end{proof}

\section{Tropical geometry\label{sec:Tropical-geometry}}

For a finite set $A$ in $\mathbb{Z}^{n}$ we let $P=conv(A),$ the
convex hull in $\mathbb{R}^{n}$ of the set $A$. 
\begin{defn}
\label{def:tropicalpolynomial}A tropical polynomial is a function
$f(x)=\max_{\alpha\in A}(-\nu(\alpha)+\alpha\cdot x)$, where $\nu:A\rightarrow\mathbb{R}$
is some arbitrary function. For a tropical polynomial $f$, we define
the associated tropical hypersurface, which we will denote $V_{f}$,
as the set where $f$ is not smooth.
\end{defn}
Observe that a tropical polynomial is a convex function. Moreover,
since the maximum of a finite number of affine functions is piecewise
affine, we see that $V_{f}$ is the set where $f$ is not affine.
This set coincides with the set where two or more of the elements
which we take the maximum over obtain the maximum value at the same
time. It is easy to realize that $V_{f}$ thus consist of finitely
many affine hyperplanes (or rather convex polyhedras), glued together
at $(n-2)-$dimensional affine manifolds of $\mathbb{R}^{n}$. Now,
let us extend the function $\nu$ to all of $\mathbb{R}^{n}$ by letting
\begin{eqnarray*}
\nu_{\infty}(x) & = & \begin{cases}
\nu(x), & x\in A\\
\infty, & x\notin A\end{cases}.\end{eqnarray*}
The tropical polynomial $f$ then coincides with the Legendre transform
of $\nu_{\infty}$. It is a well known fact of convex analysis that
applying the Legendre transform twice to any function $g:\mathbb{R}^{n}\rightarrow\mathbb{R}$
will produce the largest convex function which is smaller than $g$
at any point. Thus, applying the Legendre transform to the tropical
polynomial $f$ gives us the largest convex function on $\mathbb{R}^{n}$
which, when restricted to $A$, is less than or equal to $\nu$. We
will denote this function by $\tilde{\nu}$. It is not hard to realize
that $\tilde{\nu}$ is piecewise affine on $P$ and equal to $+\infty$
on $\mathbb{R}^{n}\setminus P$. Let us assume for the time being
that the dimension $n=2.$ Then the set $\Gamma\subset P$ defined
as the set where the function $\tilde{\nu}$ is singular, is a graph
which is dual to the tropical line $V_{f}$ in the following sense
(cf. \cite{RST}): each edge of $\Gamma$ is perpendicular to an edge
of $V_{f}$ and vice versa. One calls the graph $\Gamma$ a \emph{convex
triangulation} of the polytope $P.$ Similar statements hold in higher
dimensions as well. 

We can associate weights, normal vectors and primitive integer vectors
to the facets of $V_{f}$ in the following way: consider an $(n-1)-$dimensional
facet $V$ of $V_{f}$. The set $V$ is the set where precisely two
of the affine functions competing for the maximum in the tropical
polynomial $f$, say $-\nu(\alpha_{1})+\alpha_{1}\cdot x$ and $-\nu(\alpha_{2})+\alpha_{2}\cdot x$,
are equal and realize the maximum. The facet $V$ has two natural
normal vectors defined from the data given, namely $\alpha_{1}-\alpha_{2}$
and $\alpha_{2}-\alpha_{1}$. We pick one of these two, and call it
the normal vector $v$ associated to $V$. Note that a choice of normal
vector $v$ induces an orientation on $V$ compatible with any fixed
choice of orientation on $\mathbb{R}^{n}$. We will only be interested
in the pair $(V,v)$ where we take the orientation of $V$ into account,
and consequently it does not matter which vector we chose above to
be the normal vector associated to $V$: If we instead had chosen
$-v$, the orientation of $V$ would have been reversed. The weight
$w$ is the absolute value of the greatest common divisor of the numbers
$v^{1},...,v^{n}$, where $v^{j}$ denotes the $j$:th component of
the vector $v.$ We let $N$ denote the primitive integer vector associated
to $v=\alpha_{1}-\alpha_{2}$, i.e., the vector in $\mathbb{Z}^{n}$
such that $wN=v$.

A tropical hypersurface $V_{f}$ thus consists of a finite number
of convex polyhedras of dimension $(n-1)$, call them $V_{1},...,V_{s}$,
which are glued together along convex polyhedras of dimension $(n-2)$,
which we denote by $W_{1},...,W_{r}$. Now, assume that $W_{1}$ is
the locus of intersection of $V_{1},...,V_{k}$, and the sign of the
corresponding normal vector $v_{i}$ has been chosen such that each
$V_{i}$ induces the same orientation on $W_{1}$. We explain this
last condition in detail. Fix an orientation of $W_{1}$ and let $\{e_{1},..,e_{n-2}\}$
be a basis of $W_{1}$, compatible with the orientation chosen. Fix
also one of the convex polyhedras adjacent to $W_{1}$, say $V_{1}$.
Then there exists a unique unit normal of $W_{1}$ pointing into $V_{1}$,
which we denote by $w_{1}$, and for which $\{w_{1},e_{1},..,e_{n-2}\}$
is a basis for $V_{1}$. We choose the sign of $v_{1}$ so that $\{v_{1},w_{1},e_{1},..,e_{n-2}\}$
is a basis compatible with the fixed orientation of $\mathbb{R}^{n}$.
Under these circumstances one can show:
\begin{prop}
\label{pro:Balancing-property}(Balancing property of tropical varieties,
cf. \cite{Mikhalkin}) With the above hypothesis the\emph{ balancing
condition} holds around $W$:\[
\sum_{i=1}^{k}v_{i}=0.\]

\end{prop}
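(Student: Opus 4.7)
The plan is to localize the problem near a generic interior point of $W$ and reduce it to the two-dimensional case, where balancing becomes a telescoping identity for the gradients of the affine pieces of $f$.

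First I would fix a point $p$ in the relative interior of $W$, and choose affine coordinates $x=(x_1,x_2,y_1,\dots,y_{n-2})$ on $\mathbb{R}^n$ so that $p=0$ and $W=\{x_1=x_2=0\}$ near $p$. Since $W\subset V_i$ for $i=1,\dots,k$ and each $V_i$ is a hyperplane near $p$, we have $V_i\cap U=\ell_i\times\mathbb{R}^{n-2}$, where $\ell_i$ is a half-line emanating from the origin in the $(x_1,x_2)$-plane. The normal vector $v_i$ is perpendicular to $V_i$, hence perpendicular to $W$, so $v_i$ lies in the span of $\partial_{x_1},\partial_{x_2}$.

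Next I would show that $f$ reduces to a 2-dimensional tropical polynomial plus an affine function of $y$. On each maximal region $U_\alpha$ adjacent to $W$ near $p$, $f$ is affine, with some gradient depending on $\alpha$. Crossing $V_i$, the gradient of $f$ jumps by exactly $\pm v_i$ (this is how $v_i$ was defined), and by the previous step this jump has no $y$-component. Therefore the $y$-component of $\nabla f$ is the same constant vector $c$ on all adjacent regions, and we may write $f(x_1,x_2,y)=g(x_1,x_2)+c\cdot y+d$ locally, where $g$ is a tropical polynomial in two variables whose curve $V_g$ consists precisely of the rays $\ell_1,\dots,\ell_k$, with associated normal vectors $v_1,\dots,v_k\in\mathbb{R}^2$.

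The 2-dimensional problem is now a direct telescoping. Label the rays $\ell_1,\dots,\ell_k$ cyclically around the origin, and let $L_i$ be the affine function equal to $g$ in the sector between $\ell_i$ and $\ell_{i+1}$. On $\ell_i$, the functions $L_{i-1}$ and $L_i$ tie for the maximum, so $v_i=\nabla L_i-\nabla L_{i-1}$ up to sign; the orientation convention in the hypothesis — that each $V_i$ should induce the same orientation on $W$ — is exactly what forces the sign to be consistent around the cycle. Summing telescopically,
\[
\sum_{i=1}^{k}v_i=\sum_{i=1}^{k}\bigl(\nabla L_i-\nabla L_{i-1}\bigr)=0.
\]

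Equivalently, and more in the spirit of the paper, one may observe that $dd^{\#}g$ is a closed positive $(1,1)$-current on $\mathbb{R}^2$ whose explicit form near the origin is $\sum_{i}[\ell_i]\otimes(v_i\cdot dx)\wedge(v_i\cdot d\xi)/|v_i|$; applying $d$ and using Stokes (Proposition \ref{pro:stokes}) localizes the only possible boundary contribution to the origin, and the condition that it vanish is precisely $\sum_i v_i=0$. The main technical obstacle is the sign bookkeeping in the telescoping step — i.e., checking that the orientation-compatibility hypothesis in the statement makes every $v_i$ appear with the \emph{same} sign in the sum — but once the coordinates above are fixed this is straightforward.
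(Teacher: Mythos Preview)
Your proof is correct. The reduction to dimension two via the observation that each $v_i$ is orthogonal to $W$, followed by the cyclic telescoping $\sum_i(\nabla L_i-\nabla L_{i-1})=0$, is the standard direct argument and there are no gaps; the sign compatibility you flag is indeed routine once coordinates are chosen.

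It is worth noting, however, that the paper does \emph{not} prove this proposition where it is stated: it is quoted as a known fact with a reference to Mikhalkin. The paper's own contribution in this direction comes later and goes by a different route. Namely, it first establishes the explicit formula $dd^{\#}f=\sum_i[V_i]\wedge J(v_i^{*})$ (Proposition~\ref{pro:ddsharpascurrentsofintegration}), and then shows in the subsequent proposition that for a current of this form the closedness condition $dT=0$ is \emph{equivalent} to the balancing condition $\sum_i v_i=0$ at every codimension-two face. Since $T=dd^{\#}f$ is automatically $d$-closed, balancing follows. This is precisely the current-theoretic argument you sketch in your final paragraph.

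So your main argument is genuinely different from what the paper eventually does: yours is local, elementary, and uses nothing beyond the definition of $f$ as a max of affine functions; the paper's argument is global, passes through the representation of $V_f$ as a closed positive current, and uses Stokes together with the support lemma (Lemma~\ref{lem:thmofsupport}). Your approach is quicker and more self-contained; the paper's approach has the advantage of making the equivalence ``balancing $\Leftrightarrow$ closed'' explicit, which is one of the conceptual points of the article.
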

Let us study an example as to see how tropical polynomials may arise
in practice. 

Consider a complex algebraic hypersurface $\{h=0\}$ in $\mathbb{C}^{n}$,
where $h$ is a Laurent polynomial $h=\sum_{\alpha\in A}c_{\alpha}z^{\alpha}$,
where multi-index notation is used. Let $P=conv(A)$ be the \emph{Newton
polytope} associated with $f$. We consider the function $Log:\mathbb{C}^{n}\rightarrow\mathbb{R}^{n},$
given by $Log(z_{1},...,z_{n})=(\log|z_{1}|,...,\log|z_{n}|),$ and
define $\mathcal{A}_{h}=Log(\{h=0\}).$ This set $\mathcal{A}_{h}\subset\mathbb{R}^{n}$
is called the \emph{amoeba }of the polynomial $h$. Tropical pictures
arise when we start deforming the amoeba, and shrink its width to
0. To make this precise, we let for $t>0$, $\log_{t}(x)=\log(x)/\log(t),$
and define $Log_{t}$ by exchanging $\log$ for $\log_{t}$ in the
definition of $Log$. Also we define $\mathcal{A}_{h}^{t}=Log_{t}(\{h=0\}).$
Then as $t\rightarrow0$, the sets $\mathcal{A}_{h}^{t}$ converges
to a set in the Hausdorff topology (cf. \cite{Mikhalkin}), which
we denote by $\mathcal{S}_{h}$. This set $\mathcal{S}_{h}$ can actually
be seen to be piecewise affine, and all its pieces have rational slope.
Thus, it constitutes a tropical variety. 
\begin{example}
Let us consider the two dimensional case. We choose the polynomial
$h$ to be $1+z^{2}+w$ where $(z,w)$ are coordinates for $\mathbb{C}^{2}$.
Its Newton polytope is then the triangle with vertices at the points
$(0,0),(2,0),$ and $(0,1)$. When considering the image of $\mathcal{A}_{h}\subset\mathbb{R}^{2},$
where we denote the coordinates on $\mathbb{R}^{2}$ with $(x,y),$
under the map $Log$, points at which one of the coordinates is 0
will be sent to $-\infty,$ so we start searching for those. If $w=0$
then $z=\pm1$, if we are to have $(z,w)\in\{h=0\}$. This point will
be sent under the $Log$-map to the ray along the $y-$axis starting
at $(0,0)$ and ending in $(0,-\infty).$ Similarly, if $z=0$ then
$w=-1,$ so this point will be sent to the ray along the $x-$axis,
starting at $(0,0)$ ending in $(-\infty,0).$ Also, for $z$ and
$w$ large, and $(z,w)\in\{h=0\}$, we have that $\log|w|\thickapprox2\log|z|,$
that is $y\thickapprox2x$. Thus the amoeba will have three asymptotic
lines, namely the sets $(-\infty,0]\times\{0\}$, $\{0\}\times(-\infty,0]$
and $\{y=2x\}$. Moreover, one can show that each component of the
amoeba is convex (cf. \cite{Passare}). It is not hard to realize
that if we consider the limit of $\mathcal{A}_{f}^{t}$ as $t$ gets
closer and closer to 0, the picture is that the {}``deformed'' amoeba
converges to exactly the asymptotic lines we have found, and we obtain
the tropical curve given by the tropical polynomial $\max\{0,y,2x\}$.
At this point we should also note that each of the directional vectors
for the lines are in fact normal vectors to the Newton polytope. As
in the above discussion, we say the the $\mathcal{S}_{h}$ is dual
to the polytope $P$. 
\end{example}
Tropical geometry can also be seen as algebraic geometry over a non-archimedian
field, $\mathbb{K}$. The attribute non-archimedian means that the
field has a norm which satisfy a stronger condition than the triangle
inequality, namely that \[
|x+y|\leq\max\{|x|,|y|\}.\]
Here we let $\mathbb{K}$ be the field of Puiseux series, namely the
set of all formal power series $\sum_{q\in\mathbb{Q}}a_{q}t^{q},$
where we demand that the set of all $q$ such that $a_{q}\neq0$ is
bounded from below. We can equip $\mathbb{K}$ with a valuation map
$\nu:\mathbb{K}\rightarrow\mathbb{R}$, by demanding that $\nu(\sum_{q\in\mathbb{Q}}a_{q}t^{q}\}$
is the infimum of all $q$ such that $a_{q}\neq0$. For instance $\nu(3t^{-22}+2t^{2}+t^{4}+4)=-22$.
Let us now consider the polynomial ring $\mathbb{K}[z_{1},...,z_{n}]$,
and an element in it, $G$. Thus $G=\sum_{\alpha_{i}\in A}r_{\alpha_{i}}z^{\alpha_{i}}$,
for $A$ some finite subset of $\mathbb{Z}^{n}$, and $r_{\alpha_{i}}\in\mathbb{K}$.
To this $G$ we associate its \emph{tropicalization \[
trop(G)(x)=\max_{\alpha_{i}\in A}{\{\alpha_{i}\cdot x-\nu(r_{\alpha_{i}})\}},\]
}where $x\in\mathbb{R}^{n}$, and $\alpha\cdot x$ is the scalar product
between $x$ and $\alpha$. For instance, $trop(t^{-2}zw+tw^{2})(x,y)=\max{\{-2+x+y,1+2y\}}$.
Similar as for the function $Log$ defined above, we put $Val(z_{1},...,z_{n})=(val(z_{1}),...,val(z_{n}))$.
We now come to an important point: One can show (\cite{Kapranov})
that the closure of the set $Val(\{G=0\})$ is equal to the set in
$\mathbb{R}^{n}$ where the maximum $tropG(x)=\max_{\alpha_{i}\in A}{\{\alpha_{i}\cdot x-\nu(r_{\alpha_{i}})\}}$
is obtained by two or more of the $\alpha_{i}$. Thus the closure
of the set $Val(\{G=0\})$ defines a tropical variety. Of course,
by letting the function $\nu$ in definition \ref{def:tropicalpolynomial}
be equal to $\alpha_{i}\mapsto\nu(r_{\alpha_{i}})$, the tropical
polynomial will be just $trop(G)$, and so we could equally well take
the following as a definition of a tropical variety:
\begin{prop}
(\cite{Kapranov}) A tropical variety is given by the closure of $Val(\{G=0\})$
in $\mathbb{R}^{n}$, where $G\in\mathbb{K}[z_{1},...,z_{n}].$\end{prop}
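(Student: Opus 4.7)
The statement is classical Kapranov's theorem, and the natural plan is to establish the two set-theoretic inclusions separately.

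For the easier inclusion $\overline{Val(\{G=0\})}\subseteq V_{trop(G)}$, my plan is to use the non-archimedean property directly. Suppose $z\in(\mathbb{K}^{*})^{n}$ satisfies $G(z)=\sum_{\alpha_{i}\in A}r_{\alpha_{i}}z^{\alpha_{i}}=0$. Since $\nu(r_{\alpha_{i}}z^{\alpha_{i}})=\nu(r_{\alpha_{i}})+\alpha_{i}\cdot Val(z)$ and the valuation is non-archimedean, a sum of nonzero elements cannot equal zero unless the minimal term-valuation is attained at least twice (otherwise a unique dominating term would give $\nu(G(z))$ a finite value). Translating into the ``$\max$'' convention of $trop(G)$, this says that $Val(z)$ is a point at which two or more indices attain the maximum in $trop(G)$, i.e.\ $Val(z)\in V_{trop(G)}$. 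Since the set where a piecewise affine function is non-smooth is closed, the closure is still contained in $V_{trop(G)}$.

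For the harder inclusion $V_{trop(G)}\subseteq\overline{Val(\{G=0\})}$, the plan is an initial-degeneration argument. First, I would reduce to the case that $x\in V_{trop(G)}\cap\mathbb{Q}^{n}$ by density. For such an $x$, perform a monomial change of variables (a unit times $t^{x}$ in each coordinate) so that $x$ becomes the origin; after this reduction the hypothesis on $x$ becomes the statement that the ``initial form'' $in_{x}(G)$, obtained by discarding all terms with strictly larger valuation and reducing the leading coefficients modulo the maximal ideal of $\mathbb{K}$, is a non-monomial polynomial over the residue field $\mathbb{C}$. Since $\mathbb{C}$ is algebraically closed, $in_{x}(G)$ has a nontrivial zero in $(\mathbb{C}^{*})^{n}$ (after, if necessary, specializing the variables not appearing in $in_{x}(G)$ to generic nonzero constants). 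The final step is to lift such a zero to an actual zero $z\in(\mathbb{K}^{*})^{n}$ of $G$ with $Val(z)=x$, using either Hensel's lemma or, more directly, the fact that the field of Puiseux series $\mathbb{K}$ is itself algebraically closed.

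The main obstacle is the lifting step in the second direction. Hensel's lemma in its simplest form applies only at a simple root of $in_{x}(G)$, and there is no reason such a simple root must exist; handling the general (possibly singular) case requires a genuine argument using either a resolution or the full strength of the algebraic closedness of $\mathbb{K}$. This is precisely the content of Kapranov's theorem, for which the author rightly cites \cite{Kapranov}; a self-contained treatment would be too long to include here, so in the paper one merely states the result and uses it to justify that the tropical varieties defined combinatorially (Definition \ref{def:tropicalpolynomial}) coincide with the classical Bieri--Groves/Kapranov sets arising as valuation images.
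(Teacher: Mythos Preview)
The paper does not prove this proposition at all: it is stated purely as a citation of Kapranov's result, used to justify that the combinatorial definition of a tropical hypersurface agrees with the valuation-image description. There is therefore no ``paper's own proof'' to compare your proposal against.

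Your outline is the standard two-inclusion proof of Kapranov's theorem and is correct at the level of a sketch. You have also correctly identified the genuine difficulty: the lifting step from a zero of the initial form $in_{x}(G)$ over the residue field to a zero of $G$ over $\mathbb{K}$ with prescribed valuation. As you note, simple Hensel does not suffice in general, and one must invoke either the algebraic closedness of the Puiseux field or a more careful argument; this is exactly why the author defers to \cite{Kapranov} rather than giving a self-contained proof. So your proposal goes well beyond what the paper does, and your final paragraph already accurately describes the situation.
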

\begin{rem}
Let $h=\sum_{\alpha\in A}c_{\alpha}z^{\alpha}$ be a complex polynomial
with $h(0)=0$, where $A$ is a finite subset of $\mathbb{Z}^{n}$.
In the complex setting, there is a generalization of the Lelong number,
called Kisleman's directed Lelong number, denoted $\gamma_{z,\varphi}(x)$,
which gives more precise information concerning the singularities
of $\varphi.$ It depends on three parameters: a plurisubharmonic
function $\varphi$ on $\mathbb{C}^{n}$, a point $z\in\mathbb{C}^{n}$,
and a vector in $x\in\mathbb{R}_{+}^{n}$, and if $x=(1,...,1)$ it
reduces to the ordinary (complex) Lelong number. It is well known
that $x\mapsto\gamma_{z,\varphi}(x)$ is a convex function, for $x\in\mathbb{R}_{+}^{n}$.
For our discussion, it suffices to say that, with $\varphi(z)=\log(h(z)),$
\[
\gamma_{0,\varphi}(x)=\max_{\alpha\in A}\{\alpha\cdot x\}.\]
This corresponds to choosing $\mathbb{K}=\mathbb{C}$ and endowing
$\mathbb{K}$ with the trivial valuation $\nu(w)=0$ for every $w\in\mathbb{C}$.
Indeed, we then have \[
trop(h)(x)=\max_{\alpha\in A}\{\alpha\cdot x\}=\gamma_{0,\log(h(z))}(x).\]
For $\nu$ a valuation on $\mathbb{K}$, we put \[
trop_{v}(\log(h))(x)=\max_{\alpha_{i}\in A}{\{\alpha_{i}\cdot x-\nu(r_{\alpha_{i}})\}}.\]
Thus, if we restrict ourselves to complex polynomials, $trop_{\nu}$
could be considered as a generalization of Kiselman's Lelong number.
It would be interesting to know if one could extend $trop_{v}$ to
act on arbitrary plurisubharmonic functions. 
\end{rem}

\subsection{Tropical geometry and super forms}

Since a tropical polynomial $f$ is the maximum of a finite number
of $\mathbb{Z}-$affine functions on $\mathbb{V}$ and since $dd^{\#}f=0$
at points where $f$ is affine, we must have that $Supp(dd^{\#}f)=V_{f}$.
Thus the support of the current $dd^{\#}f$ coincides as a point set
with the tropical hypersurface $V_{f}$. We make the following definition. 
\begin{defn}
The support of a current $T$ is of dimension $(n-1)$ if $Supp(T)$
is a piecewise smooth manifold of dimension $(n-1)$, that is, $Supp(T)$
consists of a finite number of smooth manifolds of dimension $n-1$,
glued together along manifolds of lower dimension. 
\end{defn}
We can now prove the fundamental result of this paper.
\begin{prop}
\label{pro:fundamentaltheorem}There is a one to one correspondence
between tropical hypersurfaces $V_{f}$, and closed, positive (1,1)-currents
$T$ whose support is of dimension $n-1$, and whose \emph{normal
vectors} (see below) are integral.\end{prop}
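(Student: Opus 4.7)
The proof naturally splits into a forward and a backward direction.

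In the forward direction, given a tropical polynomial $f(x) = \max_{\alpha \in A}(\alpha \cdot x - \nu(\alpha))$ with $A \subset \mathbb{Z}^{n}$ finite, I would simply set $T := dd^{\#}f$. By the proposition characterising convex functions via $dd^{\#}$, $T$ is a closed, positive $(1,1)$-current. Its support coincides with $V_{f}$, which is a finite union of $(n-1)$-dimensional polyhedra. Across any facet where two affine pieces with slopes $\alpha_{i},\alpha_{j} \in \mathbb{Z}^{n}$ tie for the maximum, the gradient of $f$ jumps by $\alpha_{i} - \alpha_{j}$; this is by definition the normal vector of $T$ at the facet, and it is integral.

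For the converse, let $T$ be a closed, positive $(1,1)$-current whose support is an $(n-1)$-dimensional piecewise smooth manifold with integral normal vectors. First I would apply the $dd^{\#}$-lemma (Proposition \ref{pro:ddsharplemma}) to obtain a convex $f$ with $T = dd^{\#}f$. By Proposition \ref{pro:suppconvex}, each connected component of $(Supp\,T)^{c}$ is convex, and since $dd^{\#}f = 0$ off the support, $f$ is affine on each component $C_{i}$, say $f|_{C_{i}} = \ell_{i} := \alpha_{i} \cdot x + c_{i}$. The finite piecewise smoothness hypothesis on $Supp(T)$ forces the collection $\{C_{i}\}$ to be finite. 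At each $(n-1)$-facet $F$ separating adjacent components $C_{i},C_{j}$, continuity of $f$ across $F$ forces $(\alpha_{i} - \alpha_{j}) \cdot x + (c_{i} - c_{j}) = 0$ on $F$, so $F$ is contained in the affine hyperplane orthogonal to the jump $\alpha_{i} - \alpha_{j}$; the integrality hypothesis then gives $\alpha_{i} - \alpha_{j} \in \mathbb{Z}^{n}$, so $F$ lies on a rational hyperplane. Since any two components can be joined by a generic path crossing only $(n-1)$-facets transversally (the lower-dimensional strata are avoidable), the slopes $\{\alpha_{i}\}$ all lie in a single coset of $\mathbb{Z}^{n}$; subtracting an appropriate affine function from $f$ — which changes neither $T$ nor $V_{f}$ — renders every $\alpha_{i}$ an integer vector.

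To finish, I would invoke the tangent plane property of convex functions: since $f$ is convex and agrees with the affine function $\ell_{i}$ on the open set $C_{i}$, one has $f \geq \ell_{i}$ everywhere, hence $f \geq \max_{i}\ell_{i}$. Equality holds on the dense union $\bigcup_{i} C_{i}$, and continuity promotes this to $f = \max_{i}\ell_{i}$ on all of $\mathbb{V}$. Setting $\nu(\alpha_{i}) := -c_{i}$ displays $f$ as a tropical polynomial whose associated hypersurface recovers $Supp(T)$ together with its normal data. The two constructions are mutually inverse modulo affine shifts of the potential (which affect neither $V_{f}$ nor $T$), yielding the claimed bijection.

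\emph{Main obstacle.} The most delicate point is establishing that every $(n-1)$-facet of $Supp(T)$ is actually a piece of a rational affine hyperplane carrying a globally constant normal jump. One must rule out degenerate configurations (for instance a facet with the same component on both sides, which would force a zero jump and contradict $F \subset Supp(T)$) and verify that the ``dual graph'' of components linked by facets is connected, so that the pairwise integrality of jumps propagates via a single affine shift to integrality of all slopes $\alpha_{i}$.
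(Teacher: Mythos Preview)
Your argument is correct and follows the same route as the paper: invoke the $dd^{\#}$-lemma to produce a convex potential, use Proposition~\ref{pro:suppconvex} to force the complement components to be convex (hence the facets affine), and then recognise $f$ as the maximum of the finitely many affine functions so obtained. You are actually more careful than the paper on one point---the integrality hypothesis only controls the jumps $\alpha_i-\alpha_j$, so an affine normalisation is needed before all slopes lie in $\mathbb{Z}^n$; the paper's proof elides this coset/shift step.
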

\begin{proof}
Let $T$ be as in the hypothesis, and denote by $A$ the support of
$T$. We first show that $A$ is a piecewise affine manifold. Fix
a point $x\in A$, and a small ball $B$ centered at $x,$ such that
$B\setminus A$ consists of precisely two components; call them $C_{1}$
and $C_{2}$. By Proposition \ref{pro:suppconvex} both $C_{1}$ and
$C_{2}$ are convex, which implies that each ($n-1$)-dimensional
piece of $A$ must be affine. Thus $A$ is piecewise affine. Now,
by Proposition \ref{pro:ddsharplemma}, we can find a convex function
$f$ such that $T=dd^{\#}f$. Let us denote by $V_{1},...,V_{N}$
the $(n-1)-$dimensional (affine) pieces of $A$. For each $i=1,...,N$,
there is a vector $v_{i}\in\mathbb{V}$ and a real number $c_{i}$
such that, for $x\in V_{i}$, \[
f(x)=-c_{i}+v_{i}\cdot x.\]
The convexity of $f$ implies that, in fact, \[
f(x)=\max_{i=1,...,N}(-c_{i}+v_{i}\cdot x),\]
for $x\in\mathbb{V}$. The condition that the currents \emph{normal
vectors} are integral, means that each vector $v_{i}$ belongs to
$\mathbb{Z}^{n}$ (under the identification of $\mathbb{V}$ with
$\mathbb{R}^{n}$). If this is the case, then $f$ is actually a tropical
polynomial, and thus we can conclude that $Supp(T)$ coincides with
a tropical hypersurface. This establishes one part of the correspondence.
The second part is easier: for each tropical hypersurface $V_{f}$
we let $T=dd^{\#}f$; then $T$ satisfies the hypothesis of the proposition.
\end{proof}
The arguments used above immediately give:
\begin{cor}
\label{cor:supportcodim1impliesaffine}If the support $Supp(T)$ is
of dimension $n-1,$ for a closed, positive $(1,1)-$current $T$,
then $Supp(T)$ is piecewise affine.
\end{cor}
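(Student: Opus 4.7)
The plan is to reduce the statement to Proposition \ref{pro:suppconvex} together with the $dd^{\#}$-lemma (Proposition \ref{pro:ddsharplemma}), which together pin down the local structure of $\mathrm{Supp}(T)$ completely. First I would invoke Proposition \ref{pro:ddsharplemma} to write $T = dd^{\#}f$ for a globally defined convex function $f$ on $\mathbb{V}$. This reduces the question about $\mathrm{Supp}(T)$ to a question about the locus where the convex function $f$ fails to be affine.

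Next I would argue locally. Fix any point $x \in \mathrm{Supp}(T)$ and, using the assumption that $\mathrm{Supp}(T)$ is a piecewise smooth $(n-1)$-manifold, choose a small open ball $B$ centred at $x$ that meets only one $(n-1)$-dimensional piece of $\mathrm{Supp}(T)$, so that $B \setminus \mathrm{Supp}(T)$ has exactly two connected components $C_1$ and $C_2$. By Proposition \ref{pro:suppconvex}, every connected component of $\mathrm{Supp}(T)^c$ is convex, hence so are the restrictions $C_1$ and $C_2$ (one can shrink $B$ so that they each lie in a single component of the global complement). On $C_i$ the current $dd^{\#}f$ vanishes, so $f$ coincides with an affine function $\ell_i$ on $C_i$.

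The key geometric step is then: since $\overline{C_1}$ and $\overline{C_2}$ share the smooth piece $\mathrm{Supp}(T) \cap B$ as common boundary, and $f = \ell_i$ on $\overline{C_i}$ by continuity, the two affine functions $\ell_1, \ell_2$ must agree on this common boundary. Hence $\mathrm{Supp}(T) \cap B \subseteq \{\ell_1 = \ell_2\}$, which is an affine hyperplane. Since the piece of $\mathrm{Supp}(T)$ through $x$ is already $(n-1)$-dimensional and smooth, it must coincide (near $x$) with an open subset of this hyperplane. Varying $x$ over the smooth part of $\mathrm{Supp}(T)$, we conclude that each top-dimensional stratum is affine, i.e.\ $\mathrm{Supp}(T)$ is piecewise affine.

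I expect the only subtle point to be justifying the bookkeeping around the ball $B$: one must be careful that shrinking $B$ really does produce exactly two complementary components lying in distinct global components of $\mathrm{Supp}(T)^c$, so that Proposition \ref{pro:suppconvex} applies and one obtains two genuinely different affine functions $\ell_1 \neq \ell_2$ whose coincidence locus is a hyperplane. Everything else is an immediate consequence of the two propositions cited.
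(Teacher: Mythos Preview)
Your proposal is correct and follows essentially the same route as the paper. The paper's own argument (contained in the first paragraph of the proof of Proposition~\ref{pro:fundamentaltheorem}, to which the corollary refers) also picks a small ball $B$ around a point of $\mathrm{Supp}(T)$, splits $B\setminus\mathrm{Supp}(T)$ into two components $C_1,C_2$, invokes Proposition~\ref{pro:suppconvex} to conclude that both are convex, and deduces that the separating piece of $\mathrm{Supp}(T)$ is affine; the only difference is that the paper phrases the last step purely geometrically (two convex open sets whose union is a ball are separated by a hyperplane), whereas you make it explicit via the affine functions $\ell_1,\ell_2$ coming from $T=dd^{\#}f$.
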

By the previous discussion, we know that $Supp(T)=V_{f}$ consists
of a finite number of convex polyhedras $V_{i}$, whose affine hull
is of dimension $n-1$, glued together at affine convex polyhedras
$W_{k}$ of dimension $n-2$. Let us recall the discussion before
Proposition \ref{pro:Balancing-property}: each facet $V_{i}$ is
the set where, for some $\alpha_{1},\alpha_{2}\in A$, \[
-c_{\alpha_{1}}+v_{\alpha_{1}}\cdot x=-c_{\alpha_{2}}+v_{\alpha_{2}}\cdot x\]
 attains the maximum defining the tropical polynomial $f,$ and we
defined the normal vector of $V_{i}$ to be, up to sign, equal to
$v_{i}=\alpha_{1}-\alpha_{2}$ . Thus $v_{i}$ is a normal vector
to $V_{i}$ whose length is determined by the tropical polynomial.
We make the following definition:
\begin{defn}
The normal 1-form associated to $V_{i}$ is defined as $v_{i}^{*}=d(v_{i}\cdot x)$
. 
\end{defn}
Let $V$ be a hyperplane in $\mathbb{R}^{n}$ with normal $v$, and
let $\delta_{V}$ denote the surface measure of $V.$ We will consider
the $(1,1)-$current \[
\frac{{1}}{|v|}\delta_{V}v^{*}\wedge J(v^{*}),\]
 whose action on an $(n-1,n-1)$-form $\alpha$ is defined by\[
\left\langle \frac{{1}}{|v|}\delta_{V}v^{*}\wedge J(v^{*}),\alpha\right\rangle =\frac{{1}}{|v|}\int_{V}\frac{v^{*}\wedge J(v^{*})\wedge\alpha}{\omega_{n}}\delta_{V}.\]
Observe that this current does not depend on which sign we have chosen
for the normal vector $v$. This current represent a tropical variety:
\begin{prop}
\label{lem:ddsharpformula}Let $V\subset\mathbb{R}^{n}$ be a hyperplane,
determined by a normal vector $v=(v_{1},...,v_{n}),$ and define $f=\max{(0,v\cdot x)}$.
Then \[
dd^{\#}f=\frac{{1}}{|v|}\delta_{V}v^{*}\wedge J(v^{*}).\]
Moreover, we have the following equality \begin{equation}
[V]\wedge J(v^{*})=\frac{{1}}{|v|}\delta_{V}v^{*}\wedge J(v^{*}),\label{eq:ddsharpformulaequation}\end{equation}
where $[V]$ is the current of integration on $V,$ with orientation
determined by $v.$ \end{prop}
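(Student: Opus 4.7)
The plan is to establish the two equalities in turn.

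For the first equality, I would compute $dd^{\#}f$ directly from the explicit form of $f=\max(0,v\cdot x)$. Since $f$ is locally Lipschitz and convex, its distributional partial derivatives are $\partial_{j}f=v_{j}\,H(v\cdot x)$, where $H$ denotes the Heaviside step function. Hence
\[
d^{\#}f \;=\; \sum_{j=1}^{n}v_{j}\,H(v\cdot x)\,d\xi_{j} \;=\; H(v\cdot x)\,J(v^{*}).
\]
Applying $d$ once more and using the distributional identity $\partial_{j}H(v\cdot x)=v_{j}\,\delta(v\cdot x)$ gives
\[
dd^{\#}f \;=\; d\bigl(H(v\cdot x)\bigr)\wedge J(v^{*}) \;=\; \delta(v\cdot x)\,v^{*}\wedge J(v^{*}).
\]
Finally, I would observe that $\delta(v\cdot x)=|v|^{-1}\delta_{V}$ as distributions on $\mathbb{R}^{n}$; this is the standard pullback of the Dirac distribution by the affine function $v\cdot x$, whose gradient has constant length $|v|$. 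Substituting yields the first formula.

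For the second equality, I would reduce it to the basic identity
\[
[V] \;=\; \frac{1}{|v|}\delta_{V}\,v^{*}
\]
of $(1,0)$-currents, from which \eqref{eq:ddsharpformulaequation} follows at once by wedging both sides with the smooth $(0,1)$-form $J(v^{*})$. The reduced identity can be verified in coordinates adapted to $V$: choose an orthonormal basis $\{e_{1}',\ldots,e_{n}'\}$ of $\mathbb{V}$ with $e_{n}'=v/|v|$, so that in the corresponding coordinates $V$ becomes the affine coordinate hyperplane $\{x_{n}'=c\}$, $v^{*}=|v|\,dx_{n}'$, and $\delta_{V}$ equals $\delta(x_{n}'-c)$ times Lebesgue measure on $V$. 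A general compactly supported $(n-1,n)$-test form $\alpha$ contributes to either side only through its component proportional to $\alpha_{0}(x)\,dx_{1}'\wedge\cdots\wedge dx_{n-1}'\wedge d\xi_{1}'\wedge\cdots\wedge d\xi_{n}'$, and on such a component both $[V]$ and $|v|^{-1}\delta_{V}\,v^{*}$ evaluate to $\int_{V}\alpha_{0}$.

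The main obstacle will be careful bookkeeping of signs and orientations in the second step: one must verify that the orientation of $V$ determined by $v$ (as specified in the statement) is compatible with the conventions for integration of $(k,n)$-forms over oriented codimension one submanifolds established earlier in the paper, and that the signs introduced by permuting the $dx$ and $d\xi$ factors to canonical position match on both sides. Once these sign issues are resolved, the remaining computation is entirely mechanical.
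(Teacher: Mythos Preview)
Your proposal is correct. For the second equality your argument is essentially identical to the paper's: both reduce to the identity $[V]=|v|^{-1}\delta_{V}\,v^{*}$ of $(1,0)$-currents, verified by choosing an orthonormal basis with $v/|v|$ as one leg and testing against the only relevant component of a general test form.

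The genuine difference is in the first equality. You compute $dd^{\#}f$ in one stroke for arbitrary $v$, writing $d^{\#}f=H(v\cdot x)\,J(v^{*})$ and differentiating distributionally to obtain $\delta(v\cdot x)\,v^{*}\wedge J(v^{*})$, then invoking the pullback identity $\delta(v\cdot x)=|v|^{-1}\delta_{V}$. The paper instead first establishes the special case $v=e_{n}$ by approximating $\max(0,x_{n})$ with smooth one-variable convex functions $g_{\varepsilon}$ satisfying $g_{\varepsilon}''\to\delta_{0}$, obtaining $dd^{\#}\max(0,x_{n})=[\{x_{n}=0\}]\wedge d\xi_{n}$, and then transports this to general $v$ via an orthogonal change of coordinates and the push-forward formula \eqref{eq:pullbackpushforwardchangeofvariable}. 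Your route is shorter and more transparent; the paper's route has the virtue of staying within the smooth-approximation framework used throughout (so that, for instance, the intersection product defined earlier via Proposition~\ref{pro:limitisunique} is visibly compatible), and it illustrates the change-of-variables machinery that is reused later in Proposition~\ref{pro:intersectionoftropicalhyperplanes}. Your remark about orientation bookkeeping is apt and is exactly the content of the paper's careful choice of basis $\{|v|^{-1}v,e_{1},\ldots,e_{n-1}\}$.
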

\begin{proof}
We prove equation \eqref{eq:ddsharpformulaequation} first. We have
the following equality of currents on $\mathbb{R}^{n}$: \begin{equation}
\frac{{1}}{|v|}\delta_{V}v^{*}=[V],\label{eq:tempformula}\end{equation}
where $[V]$ is the current of integration of $V,$ defined in a natural
way as \[
\left\langle [V],\alpha\right\rangle =\int_{V}\alpha,\]
 for $\alpha$ a compactly supported $dimV$-form on $\mathbb{R}^{n}.$
To prove this we extend $\{v\}$ to an orthonormal basis of $\mathbb{R}^{n}$,
compatible with the orientation chosen, which we denote $\{|v|^{-1}v,e_{1},...,e_{n-1}\}$.
For simplicity, we use the notation $e^{*}=e_{1}^{*}\wedge...\wedge e_{n-1}^{*}$.
Then we need only to prove the formula for forms of the type $\alpha_{0}e^{*},$
where $\alpha_{0}$ is a function. But, since $\delta_{V}=e^{*}$,
and $v^{*}\wedge e^{*}=|v|dx_{1}\wedge...\wedge dx_{n}$, we see that
\[
\left\langle \frac{\delta_{V}}{|v|}v^{*},\alpha_{0}e^{*}\right\rangle =\left\langle \delta_{V},\alpha_{0}|v|^{-1}v^{*}\wedge e^{*}\right\rangle =\int_{V}\alpha_{0}e^{*}.\]
On the other hand, \[
\left\langle [V],\alpha_{0}e^{*}\right\rangle =\int_{V}\alpha_{0}e^{*},\]
which proves the formula \eqref{eq:tempformula}. Thus, we see that
\[
\frac{{1}}{|v|}\delta_{V}v^{*}\wedge J(v^{*})=[V]\wedge J(v^{*}).\]

We now proceed to prove the first formula of the proposition. Recall
that if $P'\subset V$ is a submanifold of the same dimension as that
of $V$, and with piecewise smooth boundary, then the current $T:=[P']\wedge J(v^{*})$
satisfies\[
\left\langle dT,\alpha\right\rangle =\left\langle [\partial P']\wedge J(v^{*}),\alpha\right\rangle ,\]
which follows from Stokes' theorem \eqref{pro:stokes}. 

We begin by considering the function $f=\max{\left\{ 0,x_{n}\right\} }$.
To compute $dd^{\#}f$ we choose for $\epsilon>0$, a family of smooth,
one-variable functions $g_{\epsilon}$ satisfying $\lim_{\epsilon\rightarrow0}g_{\epsilon}(t)=\max{\{0,t\}}$,
and $\lim_{\varepsilon\rightarrow0}g_{\varepsilon}^{''}=\delta_{0}$
(the Dirac measure at 0). For such a family $g_{\epsilon}$, we put
\[
f_{\varepsilon}(x_{1},..,x_{n})=g_{\varepsilon}(x_{n}).\]
Then, for each $\alpha\in\mathcal{E}^{n-1,n-1}$, \[
\left\langle dd^{\#}f_{\varepsilon},\alpha\right\rangle =\left\langle f_{\varepsilon}^{''}dx_{n}\wedge d\xi_{n},\alpha\right\rangle =\left\langle f_{\varepsilon}^{''}dx_{n}\wedge d\xi_{n},\alpha_{nn}\widehat{dx_{n}\wedge d\xi_{n}}\right\rangle \]
\[
=\int_{\mathbb{R}^{n}}f_{\varepsilon}^{''}\alpha_{nn}dx_{1}\wedge...\wedge dx_{n},\]
 where $\alpha_{nn}$ is the coefficient in front of $\widehat{dx_{n}\wedge d\xi_{n}}$
in the sum defining $\alpha$. Thus we see that \begin{equation}
\lim_{\varepsilon\rightarrow0}\left\langle dd^{\#}f_{\varepsilon},\alpha\right\rangle =\int_{\mathbb{R}^{n-1}}\alpha_{nn}(x_{1},...,x_{n-1},0)dx_{1}\wedge...\wedge dx_{n-1},\label{eq:ddsharpofsinglevariablemax}\end{equation}
 which is the same as saying \[
dd^{\#}f=dd^{\#}\max{\left\{ 0,x_{n}\right\} }=[\{x_{n}=0\}]\wedge d\xi_{n}.\]
Now, let's turn to the general case: as above, let $\{v/|v|,e_{1},...,e_{n-1}\}$
be an orthonormal basis, and let $F$ correspond to the matrix $(v/|v|,e_{1},....,e_{n-1})$
in the standard basis of $\mathbb{R}^{n}$. We again use the notation
$e^{*}=e_{1}^{*}\wedge...\wedge e_{n-1}^{*}$. Then, if we consider
the action of $dd^{\#}f$ on the form $\alpha e^{*}\wedge J(e^{*}),$
since $detF=1$ we get by \eqref{eq:pullbackpushforwardchangeofvariable}
and the discussion above, that \[
\left\langle dd^{\#}max(0,v\cdot x),\alpha e^{*}\wedge J(e^{*})\right\rangle =\]
\[
=\left\langle dd^{\#}max(0,|v|x_{n}),F^{*}(\alpha e^{*}\wedge J(e^{*}))\right\rangle =\]
\[
=|v|\int_{\{x_{n}=0\}}F^{*}(\alpha e^{*})=|v|\int_{\{v\cdot x=0\}}\alpha e^{*}.\]
 On the other hand,\[
\left\langle [\{v\cdot x=0\}]\wedge J(v^{*}),\alpha e^{*}\wedge J(e^{*})\right\rangle =|v|\int_{\{v\cdot x=0\}}\alpha e^{*},\]
 where we used that $\int_{\mathbb{W}}J(v^{*})\wedge J(e^{*})=|v|$.
Since we need only to consider forms that are multiples of $e^{*}\wedge J(e^{*})$,
we have proved that \[
[\{v\cdot x=0\}]\wedge J(v^{*})=dd^{\#}max(0,v\cdot x),\]
which is what we aimed for. 
\end{proof}
For a tropical hypersurface $V_{f}$ consisting of $(n-1)$-dimensional
convex polyhedras $V_{i}$ as discussed before, we consider the current
defined as $T^{'}=\sum_{i=1}^{N}[V_{i}]\wedge J(v_{i}^{*})$, and
let $T=dd^{\#}f.$ The previous proposition shows that $Supp(T-T^{'})$
is of dimension at most $n-2$. Moreover, $T-T^{'}$ is closed. These
hypotheses actually implies that $T-T^{'}=0$ as follows from the
following lemma. 
\begin{lem}
\label{lem:thmofsupport}Let $S$ be a closed $(p,q)-$current whose
coefficients are measures and whose support is a piecewise affine
manifold $M\subset\mathbb{R}^{n}$ of co-dimension $p+1$. Then $S=0.$ \end{lem}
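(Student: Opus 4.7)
The plan is to localize around a smooth point of $M$, use affine coordinates adapted to $M$, and extract the vanishing of each measure coefficient $S_{IJ}$ one at a time from the closedness condition. Fix a point $x_0$ in the relative interior of one of the top-dimensional affine pieces of $M$. After an affine change of coordinates (harmless since affine maps preserve bi-degree, closedness, and the measure property), I may assume that locally $M$ is contained in the coordinate subspace $\{x_{n-p}=x_{n-p+1}=\cdots=x_n=0\}$, with $p+1$ transverse directions. Under this local identification, each measure coefficient $S_{IJ}$ corresponds to a Radon measure $\nu_{IJ}$ on the tangent affine subspace, so that $\langle S_{IJ},\phi\rangle = \int \phi(x',0)\,d\nu_{IJ}(x')$ for smooth compactly supported $\phi$, where $x'=(x_1,\ldots,x_{n-p-1})$ and $x''=(x_{n-p},\ldots,x_n)$.

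Next I would unpack $dS=0$ coefficient-wise: for every multi-index $K$ with $|K|=p+1$ and every $J$ with $|J|=q$,
$$\sum_{k\in K}\varepsilon_{k,K}\,\partial_{x_k}S_{K\setminus\{k\},J}=0$$
with appropriate signs. To isolate $S_{L,J}$ for a fixed pair $(L,J)$ with $|L|=p$, note that $L$ has only $p$ elements while there are $p+1$ transverse directions, so a transverse index $k_0\notin L$ always exists. Set $K=L\cup\{k_0\}$ and test the identity against the specifically designed function
$$\phi(x)=x_{k_0}\,\psi(x')\,\eta(x''),$$
where $\psi$ is an arbitrary smooth compactly supported function and $\eta$ is a smooth cutoff with $\eta(0)=1$. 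On $M$ one has $x_{k_0}=0$, so by the product rule $\partial_{x_k}\phi|_M$ vanishes for every $k\in K$ with $k\neq k_0$, while $\partial_{x_{k_0}}\phi|_M=\psi$. The closedness identity thus collapses to $\pm\int\psi\,d\nu_{L,J}=0$ for every $\psi$, forcing $\nu_{L,J}=0$ and hence $S_{L,J}=0$ in a neighborhood of $x_0$.

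Consequently $S$ vanishes near every smooth point of $M$, so $Supp(S)$ is contained in the skeleton $M_1$ of dimension $\leq n-p-2$ where the top-dimensional pieces of $M$ meet. The same local argument applies on every smooth piece of $M_1$: the codimension has only increased, so strictly more transverse directions are available for the choice of $k_0$, and the test function $\phi=x_{k_0}\psi\eta$ still collapses the closedness relation to $\int\psi\,d\nu_{L,J}=0$. Iterating on the successively smaller piecewise affine skeletons, the process must terminate after at most $n-p-1$ steps (since $M$ has only finitely many affine pieces), yielding $Supp(S)=\emptyset$ and therefore $S=0$. The key subtlety, and what I would flag as the heart of the argument, is the design of $\phi$: multiplication by the transverse coordinate $x_{k_0}$ annihilates every term of the closedness identity upon restriction to $M$ except the $k_0$-term, and the numerical match between the number of transverse directions ($p+1$), the required length of $K$ ($p+1$), and the bi-degree of $S$ is exactly what makes the codimension hypothesis decisive.
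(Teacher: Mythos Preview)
Your proof is correct and follows the same strategy as the paper's: localize to coordinates in which the affine piece is $\{x_1=\cdots=x_{p+1}=0\}$, exploit that the measure coefficients are annihilated by multiplication with any transverse coordinate function, combine this with $dS=0$, and then iterate on the lower-dimensional skeleton. The only difference is in packaging: the paper phrases the local step as an iterated factorization $S=S'\wedge dx_1=\cdots=S'''\wedge dx_1\wedge\cdots\wedge dx_p$ obtained from $d(x_jS)=dx_j\wedge S=0$, whereas you test the component-wise closedness identity directly against $\phi=x_{k_0}\psi(x')\eta(x'')$; these are dual formulations of the same computation.
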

\begin{proof}
Let $x\in M$. Assume that, for a small neighbourhood $U$ of $x$,
we can choose coordinates so that $M\cap U=\{x_{1}=x_{2}=...=x_{p+1}=0\}$.
Since $S$ has measure coefficients, it is easy to see that \[
x_{1}S=x_{2}S=...=x_{p}S=0\]
on $U$. It follows that $d(x_{1}S)=dx_{1}\wedge S=0$, thanks to
$S$ being closed. Thus, we can write $S=S^{'}\wedge dx_{1}$ for
a $d-$closed $(p-1,q)-$current $S^{'}$. By the same means $x_{2}S^{'}=0$
from which we see that $dx_{2}\wedge S^{'}=0$ and so $S^{'}=S^{''}\wedge dx_{2}$
for some $(p-2,q)-$current $S^{''}.$ Repeating the argument, we
eventually find that there is a $(0,q)-$current $S^{'''}$ such that
$S=S^{'''}\wedge dx_{1}\wedge...\wedge dx_{p}$. As before, this $S^{'''}$
satisfies the equation $S^{'''}x_{p+1}=0,$ which implies $S^{'''}\wedge dx_{p+1}+dS^{'''}\cdot x_{p+1}=0$.
Thus $S^{'''}\wedge dx_{p+1}=0$ on $M$, and since $S^{'''}=\sum_{|J|=q}S_{J}d\xi_{J}$
for some measures $S_{J,}$ we see that $S^{'''}$, and hence $S$
, vanish on $U$. Thus we have shown that $S$ carries no mass on
the pieces of $Supp(S)$ which are of pure co-dimension $p+1$. Thus
$S$ has support that is a piecewise affine manifold of co-dimension
$p+2.$ Iterating the procedure above gives the desired result. 
\end{proof}
Concluding the discussion before the lemma, we obtain the following
result:
\begin{prop}
\label{pro:ddsharpascurrentsofintegration}With the notation above,
$dd^{\#}f=\sum[V_{i}]\wedge J(v_{i}^{*})$. 
\end{prop}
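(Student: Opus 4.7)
The plan is to set $T = dd^{\#}f$ and $T' = \sum_{i=1}^{N}[V_i]\wedge J(v_i^*)$, show that $T-T'$ is a closed $(1,1)$-current with measure coefficients whose support is contained in the $(n-2)$-skeleton of $V_f$, and then invoke Lemma \ref{lem:thmofsupport} with $p=1$ to conclude $T=T'$.

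First I would verify that $T$ and $T'$ agree on the interior of each facet $V_i$. Fix such a facet and a point $x_0$ in its relative interior. On a small neighbourhood $U$ of $x_0$, only two of the affine pieces $-c_{\alpha_1}+v_{\alpha_1}\cdot x$ and $-c_{\alpha_2}+v_{\alpha_2}\cdot x$ realise the maximum defining $f$, so $f$ coincides on $U$ with $\ell(x)+\max\{0,v_i\cdot x+c\}$ for some affine function $\ell$ and constant $c$ (with $v_i=\alpha_1-\alpha_2$). Since $dd^{\#}\ell=0$ and $dd^{\#}$ is translation invariant, Proposition \ref{lem:ddsharpformula} gives $dd^{\#}f = [V_i]\wedge J(v_i^*)$ on $U$. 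Hence $T-T'$ vanishes outside the $(n-2)$-skeleton of $V_f$, i.e.\ its support is contained in a piecewise affine manifold of codimension at least $2$.

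Next I would check that $T-T'$ is closed. Clearly $dT = d(dd^{\#}f)=0$. For $dT'$, fix a test $(n-2,n-1)$-form $\alpha$ with compact support. By Stokes' theorem (Proposition \ref{pro:stokes}) on each $V_i$,
\begin{equation*}
\langle dT',\alpha\rangle = \pm\sum_{i=1}^{N}\int_{V_i} J(v_i^*)\wedge d\alpha = \pm\sum_{i=1}^{N}\int_{\partial V_i} J(v_i^*)\wedge \alpha.
\end{equation*}
Grouping the boundary contributions according to the $(n-2)$-faces $W_k$ around which several $V_i$ meet, and orienting each $W_k$ compatibly with $V_1,\dots,V_{k}$ as in the discussion preceding Proposition \ref{pro:Balancing-property}, the contribution at $W_k$ is $\int_{W_k}J\bigl(\sum_{j}v_j^*\bigr)\wedge\alpha$, which vanishes by the balancing condition $\sum_j v_j = 0$. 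Therefore $dT' = 0$, and hence $T-T'$ is closed.

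Finally I would apply Lemma \ref{lem:thmofsupport} with $p=q=1$: $T-T'$ is a closed $(1,1)$-current, its coefficients are (signed) measures since both $T$ and $T'$ are positive currents with measure coefficients (Proposition \ref{pro:tracemeasureinequality-1}), and its support is piecewise affine of codimension at least $2 = p+1$. The lemma then yields $T-T'=0$, which is exactly the claim. The main obstacle in this plan is the closedness computation for $T'$: one must carefully track orientations on the facets $V_i$ and on the $(n-2)$-faces $W_k$ so that the balancing condition can be invoked with the correct signs; everything else is a direct consequence of results already established in the paper.
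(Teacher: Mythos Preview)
Your proposal is correct and follows the same route as the paper: show that $T=dd^{\#}f$ and $T'=\sum[V_i]\wedge J(v_i^*)$ agree on the relative interiors of the facets via Proposition~\ref{lem:ddsharpformula}, so that $\operatorname{Supp}(T-T')$ has dimension at most $n-2$, observe that $T-T'$ is closed, and conclude with Lemma~\ref{lem:thmofsupport}. The only difference is one of explicitness: the paper simply asserts ``Moreover, $T-T'$ is closed'' and defers the computation showing that the balancing condition is equivalent to $dT'=0$ to the next proposition, whereas you carry out the Stokes/balancing argument directly to verify $dT'=0$ on the spot; your concern about tracking orientations is exactly the content of that later proposition.
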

We can also show the following proposition, sheding more light on
the connection between currents and tropical hypersurfaces. 
\begin{prop}
Let $T=\sum[V_{i}]\wedge J(v_{i}^{*})$ be a tropical hypersurface.
Then the condition $dT=0$ is equivalent to the balancing condition
(cf. Proposition \ref{pro:Balancing-property}): for each $(n-2)-$dimensional
affine manifold $W$ defined as the locus where $m$ hyperplanes $V_{1},...,V_{m}$
meet, we have that that $\sum_{1}^{m}v_{i}=0$. \end{prop}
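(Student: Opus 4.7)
The plan is to localize the computation of $dT$ at each codimension-two junction. Writing $T = \sum_i [V_i] \wedge J(v_i^*)$, the $1$-form $J(v_i^*)$ has constant coefficients, so $d J(v_i^*) = 0$. Away from the union of $(n-2)$-dimensional junctions, each facet $V_i$ is an affine manifold without boundary, so $d[V_i] = 0$ there. Consequently $dT$ is supported on $\bigcup_k W_k$, and it suffices to determine $dT$ in a neighborhood of each $W_k$ separately.

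Fix a junction $W$ at which the facets $V_1, \ldots, V_m$ meet. Using the Stokes identity derived in the proof of Lemma~\ref{lem:ddsharpformula}, namely $d([V_i] \wedge J(v_i^*)) = [\partial V_i] \wedge J(v_i^*)$, only the $W$-portion of $\partial V_i$ contributes near $W$. By the sign convention on the $v_i$ recalled just before Proposition~\ref{pro:Balancing-property}, chosen precisely so that each $V_i$ induces the same orientation on $W$, the Stokes boundary currents $[\partial V_i]|_W$ all equal the same oriented current $\varepsilon [W]$ for a common sign $\varepsilon\in\{\pm 1\}$. Consequently, in a neighborhood of $W$,
\[
dT \;=\; \varepsilon\,[W] \wedge J\Bigl(\sum_{i=1}^m v_i^*\Bigr) \;=\; \varepsilon\,[W] \wedge J(u^*), \qquad u := \sum_{i=1}^m v_i.
\]

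To conclude, I would check that $[W] \wedge J(u^*) = 0$ iff $u = 0$. Each $v_i$ is perpendicular to its facet $V_i\supset W$, so $v_i$ lies in the two-dimensional normal space $N_W$, whence $u\in N_W$. Choosing affine coordinates in which $W = \{x_{n-1}=x_n=0\}$, the same delta-function computation that underlies the proof of Lemma~\ref{lem:ddsharpformula} gives $[W] = \pm\,\delta_W\, dx_{n-1}\wedge dx_n$, so that
\[
[W]\wedge J(u^*) \;=\; \pm\,\delta_W\bigl(u_{n-1}\, dx_{n-1}\wedge dx_n\wedge d\xi_{n-1} + u_n\, dx_{n-1}\wedge dx_n\wedge d\xi_n\bigr);
\]
pairing with test forms that isolate $d\xi_{n-1}$ or $d\xi_n$ recovers $u_{n-1}$ and $u_n$ separately. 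Running this over all junctions delivers the claimed equivalence.

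The main obstacle I anticipate is the orientation bookkeeping: one must verify that the sign choices made for the $v_i$ in the preamble to Proposition~\ref{pro:Balancing-property} really do yield a common Stokes-induced orientation on $W$ for every $V_i$ meeting $W$, so that summing the boundary currents produces $J\bigl((\sum v_i)^*\bigr)$ rather than $J$ of a signed combination. Once that is confirmed, the rest is mechanical.
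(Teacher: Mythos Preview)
Your argument is essentially the same as the paper's, presented a bit more geometrically. The paper also applies Stokes to get $\langle dT,\alpha\rangle=\langle\sum_i[\partial V_i]\wedge J(v_i^*),\alpha\rangle$ for test forms $\alpha$ supported near a generic point of $W$, then expands $J(v_i^*)\wedge\widehat{d\xi_j}=v_i^j\,d\xi_1\wedge\cdots\wedge d\xi_n$ to obtain
\[
\langle dT,\alpha\rangle=\pm\sum_{j,I}\Bigl(\sum_{i=1}^m v_i^j\Bigr)\int_W\alpha_{I,\hat j}\,dx_I,
\]
and reads off the equivalence by choosing $\alpha$ appropriately. Your formulation $dT=\varepsilon[W]\wedge J(u^*)$ with $u=\sum v_i$ is the same computation packaged as an identity of currents; your observation that $u\in N_W$ (since each $v_i\perp V_i\supset W$) is a nice simplification the paper does not make explicit.

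One point you should add: your local computation is valid only near \emph{generic} points of each $W$, so once balancing gives $dT=0$ there, you still need to rule out mass on the strata of dimension $\leq n-3$ where several junctions meet. The paper handles this by invoking Lemma~\ref{lem:thmofsupport} (a closed current with measure coefficients supported in too low a dimension must vanish); you should cite the same lemma, noting that $dT=\sum_i[\partial V_i]\wedge J(v_i^*)$ indeed has measure coefficients. The orientation bookkeeping you flag as the main obstacle is treated identically in the paper, which simply carries a global $\pm$ and relies on the sign convention fixed before Proposition~\ref{pro:Balancing-property}.
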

\begin{proof}
For $1\leq j\leq n$ , we let $v_{i}^{j}$ denote the $j$:th component
of the vector $v_{i}.$ Each $V_{i}$ has a boundary built up from
a number of $(n-2)$ dimensional pieces which we call $P_{i}^{r}$.
For a fixed $W$ where $V_{1},...,V_{m}$ meet, we then have finitely
many $P_{i}^{r}$, say $P_{1}^{1},...,P_{k}^{1}$ coinciding with
$W$ as sets. Fix a $(n-2,n-1)$-form $\alpha=\sum_{I,j}\alpha_{I,\hat{j}}dx_{I}\wedge d\widehat{\xi_{j}}$
with compact support in a small neighbourhood of a point on $W$.
We can choose the support so small that $W$ is the only part of co-dimension
two of $V_{f}$ that lies in $Supp(\alpha)$. Then, by Stokes' theorem,
\[
\left\langle dT,\alpha\right\rangle =\left\langle \sum_{i=1}^{m}[\partial V_{i}]\wedge J(v_{i}^{*}),\alpha\right\rangle =\left\langle \sum_{i=1}^{m}[P_{i}^{1}]\wedge J(v_{i}^{*}),\alpha\right\rangle .\]
 Thus, since $J(v_{i}^{*})\wedge\widehat{d\xi_{j}}=v_{i}^{j}d\xi_{1}\wedge...\wedge d\xi_{n}$
, \[
\left\langle dT,\alpha\right\rangle =\sum_{i=1}^{m}\left\langle [P_{i}^{1}]\wedge J(v_{i}^{*}),\sum_{I,j}\alpha_{I,\hat{j}}dx_{I}\wedge\widehat{d\xi_{j}}\right\rangle =\pm\sum_{j,I}(\sum_{i=1}^{n}v_{i}^{j})\int_{W}\alpha_{I,\hat{j}}dx_{I}.\]
 Thus, if $\sum_{i=1}^{m}v_{i}=0,$ this last sum is 0, whence $\left\langle dT,\alpha\right\rangle =0.$
Since we can do this for every $W$, we see that $dim(Supp(dT))\leq n-3$
and since $dT$ has measure coefficients, Lemma \eqref{lem:thmofsupport}
implies that $dT=0$. Conversely, if $\left\langle dT,\alpha\right\rangle =0,$
then \[
\sum_{j,I}(\sum_{i=1}^{n}v_{i}^{j})\int_{W}\alpha_{I,\hat{j}}dx_{I}=0\]
and so, to see that the balancing property holds, it suffices to choose,
for every fixed $I_{0},j_{0}$, a form $\alpha$ such that $\int_{W}\alpha_{I_{0},\hat{j_{0}}}dx_{I_{0}}=1$
and $\int_{W}\alpha_{I,\hat{j}}=0$ for $I\neq I_{0},J\neq J_{0}$. \end{proof}
\begin{example}
For the tropical hyperplane corresponding to the polynomial $f=\max{\{0,v\cdot x\}}$
we have\[
\nu_{0}(V_{f})=\nu_{0}(dd^{\#}f)=|v|.\]

To see this, let $V$ be the singularity locus of $f$. Then, since
$T:=dd^{\#}f=[V]\wedge J(v^{*}),$ we have \[
\Theta_{T}(B(0,r))=\frac{1}{2^{n-1}(n-1)!}\sum_{i=1}^{n}v_{i}\int_{\{B(0,r)\cap V\}\times\mathbb{R}^{n}}d\xi_{i}\wedge(dd^{\#}|x|^{2})^{n-1}\]
 and it easy to see that, \[
\frac{1}{2^{n-1}(n-1)!}\int_{\{B(0,r)\cap V\}\times\mathbb{R}^{n}}d\xi_{i}\wedge(dd^{\#}|x|^{2})^{n-1}=\mbox{\ensuremath{\frac{{v_{i}}}{|v|}}}Vol(B^{n-1})r^{n-1}.\]
Thus\[
\Theta_{T}(B(0,r))=Vol(B^{n-1})|v|r^{n-1}.\]
and so \[
\nu_{x}(dd^{\#}f)=\lim_{r\rightarrow0}\frac{{\Theta_{T}(B(x,r))}}{r^{n-1}}=|v|,\]
 as claimed.
\end{example}
An easy adaptation of this example shows the following:
\begin{prop}
Let $T=\sum[V_{i}]\wedge J(v_{i}^{*})$ and let $x$ be a point where
a finite number of the polyhedras $V_{i}$ meet at a convex polyhedron
of co-dimension 2. Assume, after reordering, that they are $V_{1},...,V_{m}.$
Then \[
\nu_{_{x}}(T)=\frac{1}{2}\sum_{i=1}^{m}|v_{j}|.\]

\end{prop}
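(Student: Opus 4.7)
The plan is to reduce to the calculation of $\nu_0(dd^\# f)$ carried out in the preceding example. By linearity of the trace measure I would write
\[
\Theta_T(B(x,r)) = \sum_{i=1}^{s} \Theta_{[V_i]\wedge J(v_i^*)}(B(x,r)),
\]
and then observe that, because $x$ lies in the relative interior of the codimension-$2$ cell $W$ where precisely $V_1,\dots,V_m$ meet, for $r>0$ sufficiently small $B(x,r)$ intersects only these $m$ facets, so only those indices contribute.

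Next, I would apply a localized version of the preceding computation to each $V_i$. What that example actually establishes, with no reference to the particular set $B(0,r)$, is the identity $\Theta_{[V]\wedge J(v^*)}(U) = |v|\,\mathcal{H}^{n-1}(V\cap U)$ for any Borel set $U$, where $V$ is an affine hyperplane with normal $v$ and $\mathcal{H}^{n-1}$ denotes $(n-1)$-dimensional Hausdorff measure; the same Stokes-plus-coordinate manipulation applied to each $V_i$ (which is a subset of such a hyperplane) yields $\Theta_{[V_i]\wedge J(v_i^*)}(B(x,r)) = |v_i|\,\mathcal{H}^{n-1}(V_i\cap B(x,r))$.

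The key geometric observation, and the step I expect to require the most care, is that $V_i\cap B(x,r)$ is exactly a \emph{half} $(n-1)$-ball for $r$ small. Choosing coordinates so that $x=0$ and the affine span of $W$ is $\{x_1=x_2=0\}$, each $V_i$ locally takes the form $\{y + tu_i : y\in W,\, t\ge 0\}$ for some unit vector $u_i$ orthogonal to $W$; hence $V_i\cap B(x,r)$ is a hyperplane intersected with a ball and cut by a half-space through the center, i.e.\ half an $(n-1)$-disk of radius $r$. Consequently $\mathcal{H}^{n-1}(V_i\cap B(x,r)) = \tfrac{1}{2}Vol(B^{n-1})r^{n-1}$.

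Combining these pieces I obtain $\Theta_T(B(x,r)) = \tfrac{1}{2}\sum_{i=1}^m |v_i|\,Vol(B^{n-1})r^{n-1}$, so dividing by $Vol(B^{n-1})r^{n-1}$ and letting $r\to 0$ gives $\nu_x(T) = \tfrac{1}{2}\sum_{i=1}^m |v_i|$. There is no essential obstacle beyond the half-ball identification; everything else is a direct specialization of tools already developed, and the factor $\tfrac{1}{2}$ is precisely the signature of $x$ sitting on a codimension-$2$ boundary cell rather than in the interior of a facet.
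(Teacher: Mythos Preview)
Your proposal is correct and is precisely the ``easy adaptation'' the paper leaves to the reader (the paper gives no proof beyond the sentence ``An easy adaptation of this example shows the following''). The linearity reduction, the identification $\Theta_{[V_i]\wedge J(v_i^*)}(U)=|v_i|\,\mathcal{H}^{n-1}(V_i\cap U)$ from the preceding example, and the half-ball observation at a relative-interior point of the codimension-$2$ cell are exactly the ingredients intended.
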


\subsection{Intersection theory and tropical varieties of higher co-dimension}

Let $f_{1},...,f_{p}$ be tropical polynomials with corresponding
tropical hypersurfaces $V_{f_{1}},...,V_{f_{p}}.$ 
\begin{defn}
The intersection of $V_{f_{1}},...,V_{f_{p}}$ is defined as the strongly
positive $(p,p)$-current \[
V_{f_{1}}\wedge...\wedge V_{f_{p}}:=dd^{\#}f_{1}\wedge...\wedge dd^{\#}f_{p}.\]

\end{defn}
By Proposition \ref{pro:intersectionstability2} the intersection
is stable in the following sense: if $V_{\epsilon}\rightarrow V$,
then \[
V_{\epsilon}\wedge V_{1}\wedge...\wedge V_{p}\rightarrow V\wedge V_{1}\wedge...\wedge V_{p},\]
as $\epsilon\rightarrow0.$ However, it is important to realize that
this does not imply that the support of $V_{\epsilon}\wedge V_{1}\wedge...\wedge V_{p}$
tend to the support of $V\wedge V_{1}\wedge...\wedge V_{p}$ in the
Hausdorff topology (see Example \ref{exa:intersectionexample}). We
proceed by investigating properties of intersections of tropical hypersurfaces.
\begin{prop}
\label{pro:intersectionoftropicalhyperplanes}Let $v_{i}$ be linearly
independent vectors in $\mathbb{R}^{n}$, for $1\leq i\leq n$, and
define $f_{i}(x)=\max\{0,v_{i}\cdot x\}$, where $x\in\mathbb{R}^{n}.$
Then \[
dd^{\#}f_{1}\wedge...\wedge dd^{\#}f_{n}=c\delta_{0}\]
where \[
c=|det(v_{1},...,v_{n})|.\]
\end{prop}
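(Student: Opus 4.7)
The plan is to identify each $f_{i}$ as the support function of a line segment and then invoke Proposition \ref{pro:mixedvolumeMA-1}. Let $K_{i}=conv\{0,v_{i}\}$ be the segment from the origin to $v_{i}$. Then
\[
H_{K_{i}}(x)=\sup_{\xi\in K_{i}}x\cdot\xi=\sup_{t\in[0,1]}t(v_{i}\cdot x)=\max(0,v_{i}\cdot x)=f_{i}(x),
\]
so $f_{i}=H_{K_{i}}$ and Proposition \ref{pro:mixedvolumeMA-1} applies to give
\[
dd^{\#}f_{1}\wedge\ldots\wedge dd^{\#}f_{n}=n!\cdot V(K_{1},\ldots,K_{n})\cdot\delta_{0}\cdot\omega_{n}.
\]

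The remaining task is to compute the mixed volume $V(K_{1},\ldots,K_{n})$. Since the $v_{i}$ are linearly independent, the Minkowski sum
\[
t_{1}K_{1}+\ldots+t_{n}K_{n}=\{s_{1}t_{1}v_{1}+\ldots+s_{n}t_{n}v_{n}:s_{i}\in[0,1]\}
\]
is a non-degenerate parallelepiped with edges $t_{i}v_{i}$, and hence
\[
P(t_{1},\ldots,t_{n}):=Vol(t_{1}K_{1}+\ldots+t_{n}K_{n})=|\det(v_{1},\ldots,v_{n})|\cdot t_{1}t_{2}\cdots t_{n}.
\]
In the symmetric expansion $P=\sum a_{i_{1},\ldots,i_{n}}t_{i_{1}}\cdots t_{i_{n}}$, the monomial $t_{1}t_{2}\cdots t_{n}$ is hit by $n!$ distinct ordered tuples of indices, so the coefficient is $V(K_{1},\ldots,K_{n})=a_{1,\ldots,n}=|\det(v_{1},\ldots,v_{n})|/n!$. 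Substituting back, the $n!$ factors cancel and one obtains $c=|\det(v_{1},\ldots,v_{n})|$ as claimed (with the paper writing $c\delta_{0}$ as short-hand for $c\delta_{0}\omega_{n}$).

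There is no serious obstacle; the only subtle point is tracking the combinatorial factor $n!$ arising from the symmetric form of $P$, which happily matches the $n!$ appearing in Proposition \ref{pro:mixedvolumeMA-1}. A more direct alternative that avoids mixed volumes entirely is the linear change of variable $\Phi(x)=(v_{1}\cdot x,\ldots,v_{n}\cdot x)$, for which $f_{i}=\max(0,y_{i})\circ\Phi$. The proof of Proposition \ref{lem:ddsharpformula} (specifically equation \eqref{eq:ddsharpofsinglevariablemax}) together with iterated wedging yields $\bigwedge_{i=1}^{n}dd^{\#}\max(0,y_{i})=\delta_{0}\omega_{n}$ in the $y$-coordinates, after which pulling back by $\Phi$ and combining the formula \eqref{eq:determinantpullback} with the Jacobian identity $\tilde{\Phi}^{*}\delta_{0}=|\det\Phi|^{-1}\delta_{0}$ produces the factor $|\det\Phi|=|\det(v_{1},\ldots,v_{n})|$.
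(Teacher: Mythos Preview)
Your proof is correct, but the main argument you give differs from the paper's. The paper proceeds by the linear change of variables that you sketch only as an afterthought: letting $F$ be the inverse of the matrix $(v_{1},\ldots,v_{n})$, one has $f_{i}\circ F=\max(0,x_{i})$, and then formula \eqref{eq:pullbackpushforwardchangeofvariable} together with \eqref{eq:ddsharpofsinglevariablemax} and Fubini reduces the computation to $c\cdot g(0)$ directly. Your primary route instead recognises $f_{i}$ as the support function of the segment $K_{i}=conv\{0,v_{i}\}$ and invokes Proposition~\ref{pro:mixedvolumeMA-1}, reducing the problem to the elementary mixed-volume identity $n!\,V(K_{1},\ldots,K_{n})=|\det(v_{1},\ldots,v_{n})|$ for segments.

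Both arguments are short. Yours is more conceptual and ties the result cleanly into the mixed-volume framework already set up in Section~\ref{sec:Intersection-theory-of}; the paper's is more self-contained, needing only the one-variable computation \eqref{eq:ddsharpofsinglevariablemax} and the change-of-variables formula, without appealing to the mixed-volume machinery. Your tracking of the combinatorial factor $n!$ against the paper's convention $V(K_{1},\ldots,K_{n})=a_{1,\ldots,n}$ is correct.
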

\begin{proof}
Let $F$ be the linear map corresponding to the inverse of the matrix
$(v_{1},...,v_{n})$, so that $|det|F||=c^{-1}$. Then, $f_{i}\circ F=\max(0,x_{i})$,
and consequently \[
F^{*}dd^{\#}f_{i}=dd^{\#}max\{0,x_{i}\}.\]
 Using formula \eqref{eq:pullbackpushforwardchangeofvariable} we
see that for a compactly supported, smooth function $g$,\[
\left\langle dd^{\#}f_{1}\wedge...\wedge dd^{\#}f_{n},g\right\rangle =c\left\langle F^{*}(dd^{\#}f_{1}\wedge...\wedge dd^{\#}f_{n}),F^{*}g\right\rangle =\]
\[
=c\left\langle dd^{\#}max\{0,x_{1}\}\wedge...\wedge dd^{\#}max\{0,x_{n}\},g\circ F\right\rangle .\]
By Fubini's theorem and equation \eqref{eq:ddsharpofsinglevariablemax},
the last line of the above equation is equal to $c\cdot(g\circ F)(0)=c\cdot g(0)$,
which finishes the proof. 
\end{proof}
If $V_{i}$ is a tropical hyperplane of the form \[
V_{i}=dd^{\text{\#}}f_{i}=[\{v_{i}\cdot x=0\}]\wedge J(v_{i}^{*})\]
 for each $1\leq i\leq q$, the same argument as in the above proof
gives us that \[
V_{1}\wedge...\wedge V_{q}=dd^{\#}f_{1}\wedge...\wedge dd^{\#}f_{q}=[\{v_{1}\cdot x=0\}\cap...\cap\{v_{q}\cdot x=0\}]\wedge J(v_{1}^{*})\wedge...\wedge J(v_{q}^{*}).\]
Notice that if two of the $q$ hyperplanes are parallel, then the
intersection vanishes. Thus, in this case the support, $Supp(V_{1}\cdot...\cdot V_{q})$,
is either of dimension $n-q$ or empty. This property holds for general
tropical hypersurfaces as well:
\begin{prop}
\label{pro:dimensiondecreasesunderintersections}Let $V_{f_{1}},...,V_{f_{p}}$
be tropical hypersurfaces such that $V_{f_{1}}\wedge...\wedge V_{f_{p}}\neq0$.
Then,\[
dim(Supp(V_{f_{1}}\wedge...\wedge V_{f_{p}}))=n-p.\]
\end{prop}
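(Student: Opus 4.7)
The proof consists of two inequalities for $T := V_{f_1}\wedge\cdots\wedge V_{f_p}$: the lower bound $\dim\mathrm{Supp}(T) \geq n-p$ and the upper bound $\dim\mathrm{Supp}(T) \leq n-p$.

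For the lower bound, apply Corollary \ref{cor:supportthm}: $T$ is a closed, (strongly, hence ordinarily) positive $(p,p)$-current that is nonzero by hypothesis, so its support must carry positive $(n-p)$-dimensional Hausdorff measure. Since $\mathrm{Supp}(T)\subseteq\bigcap_i V_{f_i}$ lies in a finite union of convex polyhedra of dimensions at most $n-1$, at least one polyhedral piece must have dimension $\geq n-p$.

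For the upper bound, I would carry out a local computation modelled on Proposition \ref{pro:intersectionoftropicalhyperplanes}. Pick any $x_0\in\bigcap_i V_{f_i}$ outside the singular loci $\mathrm{Sing}(V_{f_i})$; in a small neighborhood $U$ of $x_0$, each $f_i$ agrees with an affine function plus $\max(0,v_i\cdot x + c_i)$, where $v_i$ is the primitive normal to the unique facet of $V_{f_i}$ through $x_0$. Approximating via convolution $f_i^\epsilon := f_i * \rho_\epsilon$ with a radial mollifier (legitimate by Proposition \ref{pro:limitisunique}), noting that affine summands are annihilated by $dd^\#$ and that the convolution of $\max(0,v_i\cdot x + c_i)$ with a radial kernel factors through the linear form $v_i\cdot x + c_i$, I obtain
\[
dd^\# f_i^\epsilon = \tilde g_{i,\epsilon}''(v_i\cdot x + c_i)\, v_i^*\wedge J(v_i^*)\quad\text{on }U
\]
for some smooth convex univariate $\tilde g_{i,\epsilon}$. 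Wedging $p$ such factors and applying Lemma \ref{lem:elementaryproperties}(1) yields
\[
\bigwedge_{i=1}^p dd^\# f_i^\epsilon = \Bigl(\prod_i \tilde g_{i,\epsilon}''(v_i\cdot x + c_i)\Bigr)\cdot\sigma_p\, v_1^*\wedge\cdots\wedge v_p^*\wedge J(v_1^*)\wedge\cdots\wedge J(v_p^*).
\]
Linear dependence of $v_1,\ldots,v_p$ — equivalently, $\dim\bigcap_i H_{v_i} > n-p$ — forces $v_1^*\wedge\cdots\wedge v_p^* = 0$, so the wedge vanishes identically on $U$, and passing to the limit gives $T|_U = 0$. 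Hence on the smooth locus, $\mathrm{Supp}(T)$ consists exactly of the $(n-p)$-dimensional transverse intersections of facet hyperplanes.

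The main obstacle is controlling $\mathrm{Supp}(T)$ on the union of singular loci $\bigcup_i \mathrm{Sing}(V_{f_i})$, a closed set of dimension at most $n-2$, which for $p\geq 3$ can exceed $n-p$. I would address this by invoking Proposition \ref{pro:ddsharpascurrentsofintegration} to write each $dd^\# f_i = \sum_j[V_{ij}]\wedge J(v_{ij}^*)$ as a finite sum over its facets, expanding the $p$-fold intersection formally into a sum indexed by tuples of facet choices, and applying the same vanishing-by-dependence principle: tuples with linearly dependent normals vanish since the corresponding wedge $J(v_{1j_1}^*\wedge\cdots\wedge v_{pj_p}^*)$ is zero, leaving only tuples with independent normals, each of which contributes a transverse $(n-p)$-dimensional piece.
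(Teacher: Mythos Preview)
Your lower bound via Corollary \ref{cor:supportthm} is exactly what the paper does. For the upper bound, however, the paper takes a shorter global route that subsumes both of your steps. Rather than computing locally on the regular part and then handling the singular locus separately, the paper replaces each facet polyhedron $V_{ij}$ by its affine hull $\tilde V_{ij}$ (a full hyperplane) and sets $\tilde V_{f_i}=\sum_j[\tilde V_{ij}]\wedge J(v_{ij}^*)$. Each summand is now globally equal to $dd^\#\max(0,v_{ij}\cdot x+c_{ij})$, so $\tilde V_{f_1}\wedge\cdots\wedge\tilde V_{f_p}$ is a legitimate intersection product of closed positive $(1,1)$-currents; by the hyperplane computation immediately preceding the proposition its support has dimension $n-p$. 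The paper then invokes $\mathrm{Supp}(V_{f_1}\wedge\cdots\wedge V_{f_p})\subset\mathrm{Supp}(\tilde V_{f_1}\wedge\cdots\wedge\tilde V_{f_p})$ and concludes.

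Your formal expansion over tuples of facets is morally the same idea, but as written it has a technical gap: the individual currents $[V_{ij}]\wedge J(v_{ij}^*)$ for \emph{bounded} polyhedra $V_{ij}$ are not of the form $dd^\# g$ with $g$ convex, so the intersection-product machinery of Section \ref{sec:Intersection-theory-of} is not defined for them and you cannot simply distribute the wedge over the sum. The affine-hull replacement is exactly what repairs this, since then every summand is a genuine strongly positive closed $(1,1)$-current with a convex potential, and multilinearity plus the dependent-normals vanishing of Proposition \ref{pro:intersectionoftropicalhyperplanes} apply. Your mollifier computation on the regular part is correct but becomes unnecessary once the global argument is in place; the paper's approach handles all points uniformly without splitting into smooth and singular loci.
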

\begin{proof}
By Proposition \ref{pro:ddsharpascurrentsofintegration}, each tropical
hypersurface $V$ can be written as \[
V=\sum_{i=1}^{k}[V_{i}]\wedge J(v_{i}^{*}),\]
where each $V_{i}$ is a convex polyehedron of dimension $n-1$. Each
$[V_{i}]$ can be considered as $\chi_{i}[\tilde{V}_{i}]$ where $\tilde{V}_{i}$
denotes the affine hull of $V_{i}$, and $\chi_{i}$ is the characteristic
function \[
\chi_{i}(x)=\begin{cases}
1, & x\in V_{i}\\
0, & x\notin V_{i}\end{cases}.\]
Define the $(1,1)-$current $\tilde{V}$ by \begin{equation}
\tilde{V}=\sum_{i=1}^{k}[\tilde{V_{i}}]\wedge J(v_{i}^{*}).\label{eq:prop14formula}\end{equation}
This current is positive and closed since each summand is, and satisfies
the relation \[
Supp(V)\subset Supp(\tilde{V}).\]
Thus the inclusion \[
Supp(V_{f_{1}}\wedge...\wedge V_{f_{p}})\subset Supp(\tilde{V}_{f_{1}}\wedge...\wedge\tilde{V}_{f_{p}})\]
holds, which implies, since $dim(Supp(\tilde{V}_{f_{1}}\wedge...\wedge\tilde{V}_{f_{p}}))=n-p$,
that\[
dim(Supp(V_{f_{1}}\wedge...\wedge V_{f_{p}}))\leq n-p.\]
By the assumption that $V_{f_{1}}\wedge...\wedge V_{f_{p}}\neq0$
we see that in fact equality must hold, since $dim(Supp(V_{f_{1}}\wedge...\wedge V_{f_{p}}))<n-p$
would force $V_{f_{1}}\wedge...\wedge V_{f_{p}}=0$ by Corollary \ref{cor:supportthm}.
\end{proof}
The following result generalizes the case of positive $(1,1)-$currents:
\begin{prop}
\label{pro:pppeicewiseaffine}Let $T$ be a strongly positive $(p,p)$-current
such that $Supp(T)$ is a piecewise smooth manifold of dimension $n-p$.
Then $Supp(T)$ is piecewise affine. \end{prop}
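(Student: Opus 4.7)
The strategy is to reduce to the codimension-one case established in Corollary \ref{cor:supportcodim1impliesaffine} by pushing $T$ forward onto suitable $(n-p+1)$-dimensional subspaces. Write $T = dd^{\#}f_{1} \wedge \cdots \wedge dd^{\#}f_{p}$ for convex potentials $f_{1}, \ldots, f_{p}$, let $M := Supp(T)$, and fix a smooth point $x_{0} \in M$. The plan is to produce enough affine hyperplanes of $\mathbb{V}$ that locally contain $M$ to force $M$ itself to be affine near $x_{0}$.

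For the basic construction, choose an $(n-p+1)$-dimensional linear subspace $H \subset \mathbb{V}$ containing the tangent space $T_{x_{0}}M$, and let $\pi \colon \mathbb{V} \to H$ be the orthogonal projection. Since $\ker \pi$ has dimension $p-1$ and meets $T_{x_{0}}M$ trivially, $\pi|_{M}$ is an immersion at $x_{0}$, and $\pi(M)$ is a smooth hypersurface of $H$ near $\pi(x_{0})$. A careful application of Proposition \ref{pro:pushforwardofcurrentsinLelongclass}, after first modifying each $f_{i}$ outside a compact neighborhood of $x_{0}$ so as to lie in the Lelong class $\mathcal{L}$ without disturbing the germ of $T$ at $x_{0}$, produces a closed positive $(1,1)$-current $\pi_{*}T$ on $H$ (the bi-degrees match: a $(p,p)$-current pushes forward under a projection to an $(n-p+1)$-dimensional space as a $(1,1)$-current). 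Its support is contained in $\pi(M)$, which near $\pi(x_{0})$ is a smooth hypersurface; Corollary \ref{cor:supportcodim1impliesaffine} then forces this hypersurface to be piecewise affine there, and by smoothness actually affine. Consequently $P_{H} := \pi^{-1}(\pi(M))$ is an affine hyperplane of $\mathbb{V}$ containing $M$ near $x_{0}$, whose normal lies in the one-dimensional subspace $H \cap (T_{x_{0}}M)^{\perp}$.

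One then varies $H$: the $(n-p+1)$-dimensional subspaces of $\mathbb{V}$ containing $T_{x_{0}}M$ are parametrized by lines in the $p$-dimensional orthogonal complement $(T_{x_{0}}M)^{\perp}$, and the construction above assigns to each such line $\ell$ an affine hyperplane $P_{\ell} \supseteq M$ whose normal direction is $\ell$. Choosing $p$ lines $\ell_{1}, \ldots, \ell_{p}$ that span $(T_{x_{0}}M)^{\perp}$, the intersection $P_{\ell_{1}} \cap \cdots \cap P_{\ell_{p}}$ is an $(n-p)$-dimensional affine subspace of $\mathbb{V}$ containing $M$ near $x_{0}$; since $M$ is itself $(n-p)$-dimensional there, it must coincide with this affine subspace locally, which is the claim.

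The main obstacle is making the pushforward rigorous: the projection $\pi$ is not proper on $M$ in general, so Proposition \ref{pro:pushforwardpropersupport} does not apply directly, and Proposition \ref{pro:pushforwardofcurrentsinLelongclass} requires the potentials $f_{i}$ to lie in $\mathcal{L}$. The technical heart is therefore to verify that each $f_{i}$ can be modified outside a neighborhood of $x_{0}$ so as to lie in $\mathcal{L}$ while preserving the germ of $T$ at $x_{0}$, and that the resulting $\pi_{*}T$ has support meeting $\pi(x_{0})$ nontrivially (so that Corollary \ref{cor:supportcodim1impliesaffine} gives genuine information about $\pi(M)$, not merely about a possibly empty sub-support). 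A viable alternative, avoiding the pushforward altogether, is a transverse slicing argument: restricting $T$ to a generic $p$-dimensional affine subspace transverse to $M$ at $x_{0}$ produces a strongly positive top-degree current supported at a single point, whose structure via an analog of Proposition \ref{pro:mixedvolumeMA-1} forces each $f_{i}$ to be piecewise affine in the transverse direction, from which the affine structure of $M$ can likewise be recovered.
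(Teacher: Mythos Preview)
Your overall strategy---push $T$ forward to an $(n-p+1)$-dimensional subspace to obtain a closed positive $(1,1)$-current, then invoke Corollary~\ref{cor:supportcodim1impliesaffine}---is exactly the paper's. The difference is in how the pushforward is made rigorous, and here the paper takes a shorter path than you do.

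You worry that $\pi$ is not proper on $M=Supp(T)$, and therefore route through Proposition~\ref{pro:pushforwardofcurrentsinLelongclass} after modifying the potentials into $\mathcal{L}$. The paper instead observes that for a \emph{generic} $(n-p+1)$-dimensional subspace $L$, the projection $\pi_L$ \emph{is} proper on $Supp(T)$: the kernel of $\pi_L$ has dimension $p-1$, and a generic $(p-1)$-plane meets the finitely many $(n-p)$-dimensional pieces of $M$ only in compact sets (the paper attributes this to the dimension count behind Proposition~\ref{pro:dimensiondecreasesunderintersections}). With properness in hand, Proposition~\ref{pro:pushforwardpropersupport} applies directly, and no modification of the $f_i$ is needed. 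This neatly dissolves both of the obstacles you flag: properness is free for generic $L$, and since $T$ is positive there is no cancellation in the fiber integrals, so $\pi_L(x_0)$ genuinely lies in $Supp((\pi_L)_*T)$.

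Your version does buy something: by choosing $H\supset T_{x_0}M$ and then varying the extra line in $(T_{x_0}M)^\perp$, you give an explicit reconstruction of the affine $(n-p)$-plane through $x_0$ as an intersection of $p$ hyperplanes. The paper is terser here, concluding from $Supp((\pi_L)_*T)\subset\pi_L(Supp(T))$ and the piecewise-affineness of the former that $Supp(T)$ is piecewise affine; implicitly this uses that the inclusion is an equality near regular points (again by positivity) and that a set whose generic linear projections are piecewise affine is itself piecewise affine. Your argument makes that last step more transparent, at the cost of the extra Lelong-class machinery. The slicing alternative you sketch at the end is a genuinely different route and would also work, but is further from the paper's argument.
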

\begin{proof}
Let $L$ be an affine subspace of $\mathbb{R}^{n}$ of dimension $n-p+1$
such that if, $\pi_{L}$ denotes the projection onto $L$, $\pi_{L}$
is proper on $Supp(T)$. By Proposition \ref{pro:dimensiondecreasesunderintersections}
this holds for almost every subspace $L$. By Proposition \ref{pro:pushforwardpropersupport},
the push-forward $(\pi_{L})_{*}T$ is a closed, positive $(1,1)-$current
on $L\times\mathbb{R}^{n}$ and by property \emph{i)}, we see that
if the push-forward is non-zero, its dimension is $n-p$. Thus, for
a generic subspace $L$ as above, $(\pi_{L})_{*}T$ is of co-dimension
$1$ in $L$, so by Corollary \ref{cor:supportcodim1impliesaffine},
we see that $Supp((\pi_{L})_{*}T)$ is piecewise affine for almost
any $\pi_{L}$. Since $Supp((\pi_{L})_{*}T)\subset\pi_{L}(Supp(T))$,
we conclude that $SuppT$ is piecewise affine.
\end{proof}
As an immediate corollary, we obtain:
\begin{cor}
With the notation of Proposition \ref{pro:dimensiondecreasesunderintersections},
\textup{$Supp(V_{f_{1}}\wedge...\wedge V_{f_{p}})$ is piecewise affine.}\end{cor}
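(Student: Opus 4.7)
The approach is to verify the hypotheses of Proposition \ref{pro:pppeicewiseaffine} for the current $T := V_{f_1}\wedge\dots\wedge V_{f_p} = dd^{\#}f_1 \wedge \dots \wedge dd^{\#}f_p$ and to apply that proposition. Strong positivity of $T$ is immediate from the definition of the intersection product, so only two points remain to be checked: the dimension of $Supp(T)$ and the fact that it is a piecewise smooth manifold. Assuming $T \neq 0$ (otherwise the statement is vacuous), Proposition \ref{pro:dimensiondecreasesunderintersections} yields at once $\dim(Supp(T)) = n-p$.

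For the piecewise smooth structure I would use the containment \eqref{eq:supp_of_intersection_subset_intersection_of_supp},
\[
Supp(T) \subset Supp(V_{f_1})\cap\dots\cap Supp(V_{f_p}) = V_{f_1}\cap\dots\cap V_{f_p}.
\]
Each tropical hypersurface $V_{f_i}$ is, by construction, a finite union of convex affine $(n-1)$-polyhedra, and the intersection of finitely many such polyhedral complexes is itself a polyhedral complex $\mathcal{P}$, i.e.\ a finite union of convex affine polyhedra of various dimensions. To conclude that $Supp(T)$ itself has the structure of a piecewise smooth manifold of dimension $n-p$, I would argue that it must equal the union of those cells of $\mathcal{P}$ of dimension exactly $n-p$ on which $T$ places nonzero trace measure. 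Indeed, if a point $x \in Supp(T)$ lay in a cell of $\mathcal{P}$ of dimension strictly less than $n-p$ and outside the closure of any $(n-p)$-cell supporting $T$, then localising $T$ to a small neighbourhood of $x$ would produce a closed positive $(p,p)$-current whose support has vanishing $(n-p)$-dimensional Hausdorff measure, contradicting Corollary \ref{cor:supportthm}. Hence $Supp(T)$ is precisely a union of $(n-p)$-dimensional faces of $\mathcal{P}$, so it is piecewise affine and a fortiori a piecewise smooth manifold of dimension $n-p$.

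With both hypotheses of Proposition \ref{pro:pppeicewiseaffine} in hand, that proposition applies directly and yields the corollary. The main obstacle in this plan is the purity-of-dimension argument above: one must rule out the possibility that $Supp(T)$ carries genuinely lower-dimensional strata sticking out from its $(n-p)$-dimensional skeleton. This is exactly where Corollary \ref{cor:supportthm} does the essential work, after which the application of Proposition \ref{pro:pppeicewiseaffine} is a formality.
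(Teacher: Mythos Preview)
Your overall plan matches the paper's: the corollary is stated there as an ``immediate'' consequence of Proposition~\ref{pro:pppeicewiseaffine}, and you set out to check its hypotheses. However, notice that in the course of verifying the piecewise-smooth hypothesis you have actually established something stronger: once you argue via \eqref{eq:supp_of_intersection_subset_intersection_of_supp} that $Supp(T)$ sits inside the polyhedral complex $\mathcal{P}=V_{f_1}\cap\dots\cap V_{f_p}$ and use Corollary~\ref{cor:supportthm} to exclude lower-dimensional strata, you conclude that $Supp(T)$ is a union of $(n-p)$-dimensional affine cells --- i.e.\ already \emph{piecewise affine}. At that point the corollary is proved, and your closing appeal to Proposition~\ref{pro:pppeicewiseaffine} is redundant (its conclusion is precisely ``piecewise affine'', which you have just obtained).

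So your argument is really a \emph{direct} proof that bypasses the projection machinery inside Proposition~\ref{pro:pppeicewiseaffine}; the paper instead treats the piecewise-smooth hypothesis as evident and lets that proposition do the work of upgrading ``smooth'' to ``affine''. Your route is more elementary and more informative for this specific situation (where the ambient polyhedral structure is visible), while the paper's route is what one would use for a general strongly positive current without such an explicit containment. One small point to tighten: your purity argument shows every $x\in Supp(T)$ lies in the closure of some $(n-p)$-cell of $\mathcal{P}$ meeting $Supp(T)$, but to conclude that $Supp(T)$ is itself a finite union of polyhedra (rather than an arbitrary closed subset of the $(n-p)$-skeleton) you should invoke the explicit local form of the product coming from Proposition~\ref{pro:ddsharpascurrentsofintegration} and the computation preceding Proposition~\ref{pro:dimensiondecreasesunderintersections}, which exhibits $T$ as a finite sum of terms $[W]\wedge J(v_1^{*})\wedge\dots\wedge J(v_p^{*})$ supported on convex polyhedra $W$.
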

\begin{example}
\label{exa:intersectionexample}For a simple example, assume that
we are in $\mathbb{R}^{2}$ and consider the intersection $dd^{\#}f\wedge dd^{\#}g$
where $f=\max(0,v_{1}x+v_{2}y)$, and $g=\max(0,w_{1}x+w_{2}y$) where
$(v_{1},v_{2})\neq(w_{1},w_{2})$ are vectors in $\mathbb{R}^{2}.$
This corresponds to the intersection between the lines $\{v_{1}x+v_{2}y\}$
and $\{w_{1}x+w_{2}y=0\}$. Indeed, by Proposition \ref{pro:intersectionoftropicalhyperplanes},
\[
dd^{\#}f\wedge dd^{\#}g=|det\left[\begin{array}{cc}
w_{1} & v_{1}\\
w_{2} & v_{2}\end{array}\right]|\delta_{0.}\]
 Thus the intersection of the two lines is the origin with intersection
multiplicity determined by the volume of the parallelepiped spanned
by the defining vectors of the two lines. As was noticed above, we
see that if the two lines coincide, the intersection vanishes. 

Let us take $(v_{1},v_{2})=(w_{1},w_{2})=(1,0)$ with associated tropical
lines $V$ and $W$ and let us perturb $W$ slightly by considering
the tropical line $W_{\epsilon}$ associated to $g_{\epsilon}(x)=\max\{(1+\epsilon)x_{1}+\epsilon x_{2},0\}$.
Then \[
W_{\epsilon}\cdot V=\delta_{0}|det\left[\begin{array}{cc}
1 & 1+\epsilon\\
0 & \epsilon\end{array}\right]|=\delta_{0}\cdot\epsilon,\]

and \[
W_{\epsilon}\cdot V\rightarrow W\cdot V,\]
as $\epsilon\rightarrow0.$ But, $Supp(W\cdot V)=\emptyset$ and $Supp(W_{\epsilon}\cdot V)=\{0\}$,
which shows that the condition $W_{\epsilon}\cdot V\rightarrow W\cdot V$
does not imply that $Supp(W_{\epsilon}\cdot V)\rightarrow Supp(W\cdot V).$ 
\end{example}
These results motivate the following definition: 
\begin{defn}
A tropical variety of co-dimension $k$ is the support of a strongly
positive $(k,k)$-current whose support have co-dimension $k$, where
we demand that each of the affine pieces should have rational slope. 
\end{defn}
A piece have rational slope if its affine hull, which is a plane of
co-dimension $k$, is the set where $k$ linear forms with integer
coefficients vanish.
\begin{prop}
Let $V_{1},...,V_{k}$ be tropical hypersurfaces. Then $V_{1}\wedge...\wedge V_{k}$
is a tropical variety of co-dimension $k$. \end{prop}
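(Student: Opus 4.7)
The plan is to verify, in turn, the three defining properties of a tropical variety of co-dimension $k$: that $T := V_1 \wedge \ldots \wedge V_k$ is a strongly positive $(k,k)$-current, that $Supp(T)$ has co-dimension $k$, and that each affine piece of $Supp(T)$ has rational slope.

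Writing $V_i = dd^{\#} f_i$ for the defining (convex) tropical polynomials $f_i$, the current $T = dd^{\#} f_1 \wedge \ldots \wedge dd^{\#} f_k$ is strongly positive directly from the definition given in Section \ref{sec:Intersection-theory-of}. I would first dispense with the trivial case $T = 0$. Assuming otherwise, Proposition \ref{pro:dimensiondecreasesunderintersections} gives $dim(Supp(T)) = n - k$, which handles the second requirement.

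Next I would use the representation of Proposition \ref{pro:ddsharpascurrentsofintegration} to write each hypersurface as $V_i = \sum_{j} [V_{i,j}] \wedge J(v_{i,j}^*)$, where each $V_{i,j}$ is a convex polyhedron contained in an affine hyperplane whose normal is the integer vector $v_{i,j}$ (by construction $v_{i,j}$ is a difference of lattice points in the exponent set of $f_i$). The support inclusion \eqref{eq:supp_of_intersection_subset_intersection_of_supp} then gives
\[
Supp(T) \subset \bigcup_{(j_1, \ldots, j_k)} \left(V_{1, j_1} \cap \ldots \cap V_{k, j_k}\right),
\]
which is a finite union of intersections of convex polyhedra, hence itself a finite union of convex polyhedra. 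Combined with the dimension statement above, this exhibits $Supp(T)$ as a piecewise smooth manifold of dimension $n-k$, so Proposition \ref{pro:pppeicewiseaffine} applies to give the piecewise affine structure.

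It remains to check rational slope. Any affine piece of $Supp(T)$ of full dimension $n-k$ lies inside some intersection $V_{1, j_1} \cap \ldots \cap V_{k, j_k}$ of the required dimension; this forces the $k$ integer normals $v_{1, j_1}, \ldots, v_{k, j_k}$ to be linearly independent, so the affine hull of the piece is cut out by $k$ independent $\mathbb{Z}$-linear forms, which is exactly the definition of rational slope. The main obstacle I anticipate is precisely this transversality step: one must rule out that $Supp(T)$ contains an $(n-k)$-dimensional piece lying in an intersection of hyperplanes whose normals fail to span a $k$-dimensional subspace. This is prevented by Proposition \ref{pro:dimensiondecreasesunderintersections} together with the observation illustrated in Proposition \ref{pro:intersectionoftropicalhyperplanes}, since a non-transversal configuration either reduces the dimension of the containing subspace below $n-k$ (making an $(n-k)$-piece impossible) or contributes zero to $T$ (as parallel hyperplanes do).
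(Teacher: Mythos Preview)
Your proof is correct and follows the same approach as the paper: strong positivity is immediate from the definition, the dimension statement comes from Proposition \ref{pro:dimensiondecreasesunderintersections}, and rational slope is deduced from the support inclusion \eqref{eq:supp_of_intersection_subset_intersection_of_supp}. You are in fact more careful than the paper about the transversality step needed for rational slope; the paper simply asserts this from the inclusion, whereas your observation that linearly dependent normals force the corresponding wedge $J(v_{1,j_1}^*)\wedge\cdots\wedge J(v_{k,j_k}^*)$ to vanish is exactly what is needed to close that gap.
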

\begin{proof}
We know that $V_{1}\wedge...\wedge V_{k}$ is a closed, strongly positive
$(k,k)-$current. Moreover, by Proposition \ref{pro:dimensiondecreasesunderintersections}
the dimension of its support is bounded from above by $n-k$, and
since \[
Supp(V_{1}\cdot...\cdot V_{k})\subset Supp(V_{1})\cap...\cap Supp(V_{k}),\]
by equation \eqref{eq:supp_of_intersection_subset_intersection_of_supp},
each piece has rational slope. Thus it is a tropical variety of co-dimension
$k.$ 
\end{proof}
The set theoretic intersection of two tropical hypersurfaces need
not coincide with the support of a tropical variety. Indeed, if $f=\max\{{0,x,y}\}$
and $g=\max\{0,x-y\}$ then the set theoretic intersection of $V_{f}$
and $V_{g}$ is the half-ray $\{(x,y):y=x,x\geq0\}$ which is not
a tropical variety (for instance, it does not satisfy the balancing
property). However, $V_{f}\wedge V_{g}$ is equal to $\delta_{0}\omega_{n}$,
which is a tropical variety. 
\begin{rem}
The intersection theory developed here seems to fit well with the
intersection theory for tropical geometry considered in \cite{Mikhalkin2}. 
\end{rem}

\subsection{Bezout's th\label{sub:Bezout's-theorem}eorem}

We use the ideas we have developed to prove known theorems within
tropical geometry. Recall that we associated to an element $f\in\mathcal{L}$
the function \[
\tilde{f}(x)=\lim_{t\rightarrow\infty}\frac{{f(tx)}}{t},\]
and that by Proposition \ref{pro:relationbetweenfanditshomogenization}
we have the following relation between $f$ and $\tilde{f}$: \begin{equation}
\int_{\mathbb{R}^{n}\times\mathbb{R}^{n}}(dd^{\#}f)^{n}=\int_{\mathbb{R}^{n}\times\mathbb{R}^{n}}(dd^{\#}\tilde{f})^{n}.\label{eq:MAoffandtildef}\end{equation}
Let us explore the effects of this result to tropical polynomials.
For $f(x)=\max_{\alpha\in A}\{c_{\alpha}+\alpha\cdot x\}$ it is easy
to see that $\tilde{f}(x)=\max_{\alpha\in A}\{\alpha\cdot x\}.$ Relating
to the discussion in the beginning of section \ref{sec:Tropical-geometry},
this means that if $f$ corresponds to a convex triangulation of $conv(A)$
then $\tilde{f}$ corresponds to {}``forgetting'' this triangulation.
In fact, since \[
\max_{\alpha\in A}\{\alpha\cdot x\}=\max_{\alpha\in conv(A)}\{\alpha\cdot x\},\]
we see that $\tilde{f}$ is just the support function of the set $conv(A).$ 
\begin{prop}
\label{pro:mixedMAisequaltoMixedVol}Let $f_{1},...,f_{n}$ be tropical
polynomials defined by \[
f_{i}=\max_{\alpha\in A_{i}}\{c_{\alpha}^{i}+\alpha\cdot x\},\]
 where each $A_{i}$ is a finite set of point of $\mathbb{Z}^{n}$,
and $c_{\alpha}^{i}$ are real numbers. Let $\tilde{A_{i}}=conv(A)$.
Then $\tilde{f_{i}}$ is the support function of $\tilde{A_{i}}$
and \begin{equation}
\int_{\mathbb{R}^{n}\times\mathbb{R}^{n}}dd^{\#}f_{1}\wedge...\wedge dd^{\#}f_{n}=n!\cdot V(\tilde{A_{1}},...,\tilde{A_{n}}).\label{eq:bezoutsequation}\end{equation}
\end{prop}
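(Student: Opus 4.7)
The plan is to prove \eqref{eq:bezoutsequation} by a polarization argument that reduces the left-hand side to a mixed Monge-Amp\`ere of support functions, to which Proposition \ref{pro:mixedvolumeMA-1} then applies. First, observe that
\begin{equation*}
\tilde f_i(x) = \lim_{t \to \infty} \frac{f_i(tx)}{t} = \lim_{t \to \infty} \max_{\alpha \in A_i}\left\{\frac{c_\alpha^i}{t} + \alpha \cdot x\right\} = \max_{\alpha \in A_i}\{\alpha \cdot x\},
\end{equation*}
and since a linear functional on $\tilde A_i = \mathrm{conv}(A_i)$ attains its maximum at a vertex, the right-hand side equals $H_{\tilde A_i}(x)$. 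Hence $\tilde f_i$ is the support function of $\tilde A_i$. Moreover, the definition of $f_i$ immediately yields the uniform bound $|f_i - \tilde f_i| \leq C_i := \max_{\alpha \in A_i}|c_\alpha^i|$.

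Next, for parameters $t_1,\ldots,t_n \geq 0$, I set $g_t = \sum_i t_i f_i$ and $\tilde g_t = \sum_i t_i \tilde f_i$. Both are convex, lie in $\mathcal L$, and satisfy $|g_t - \tilde g_t| \leq \sum_i t_i C_i$, a constant. By Corollary \ref{cor:mongeamperecomparasion2},
\begin{equation*}
\int_{\mathbb R^n \times \mathbb R^n}(dd^\# g_t)^n = \int_{\mathbb R^n \times \mathbb R^n}(dd^\# \tilde g_t)^n.
\end{equation*}
Since $(1,1)$-forms have even total degree and hence commute under wedge product, $dd^\# g_t = \sum_i t_i \, dd^\# f_i$ can be expanded by the multinomial theorem as a sum over multi-indices $(a_1,\ldots,a_n)$ with $\sum a_i = n$ of monomials $t^a$ times symmetric mixed wedge products of the $dd^\# f_i$. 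By Proposition \ref{pro:intersectioninLelongclassgivesfinitemass} the integral of each mixed term is finite, so $t \mapsto \int(dd^\# g_t)^n$ is an honest polynomial in $(t_1,\ldots,t_n)$, and similarly for $\tilde g_t$. These two polynomials agree on the orthant $\{t_i \geq 0\}$ and hence coincide identically. Extracting the coefficient of $t_1 \cdots t_n$, which by the multinomial expansion equals $n!$ times the mixed wedge product, yields
\begin{equation*}
\int dd^\# f_1 \wedge \cdots \wedge dd^\# f_n = \int dd^\# \tilde f_1 \wedge \cdots \wedge dd^\# \tilde f_n.
\end{equation*}

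Since $\tilde f_i = H_{\tilde A_i}$, Proposition \ref{pro:mixedvolumeMA-1} identifies the right-hand side as $n! \cdot V(\tilde A_1,\ldots,\tilde A_n)$, establishing \eqref{eq:bezoutsequation}. The crux of the argument lies in the polarization step: one must verify that the $t$-expansion genuinely produces polynomial identities rather than pointwise equalities, which rests on the finite-mass bound of Proposition \ref{pro:intersectioninLelongclassgivesfinitemass} together with the commutativity of the mixed Monge-Amp\`ere in its arguments so that the combinatorial factor is precisely $n!$. Once these checks are made, the theorem is a multilinear upgrade of the basic fact that Monge-Amp\`ere mass is invariant under bounded perturbations.
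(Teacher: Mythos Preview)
Your proof is correct and follows essentially the same route as the paper: reduce to the homogenized $\tilde f_i$ via a polarization identity obtained by comparing the total Monge--Amp\`ere masses of $\sum t_i f_i$ and $\sum t_i \tilde f_i$, then invoke Proposition~\ref{pro:mixedvolumeMA-1}. The only cosmetic difference is that the paper appeals to Proposition~\ref{pro:relationbetweenfanditshomogenization} (via the $\epsilon|x|$ estimate), whereas you observe directly that $|f_i-\tilde f_i|$ is bounded and use Corollary~\ref{cor:mongeamperecomparasion2}; your explicit restriction to $t_i\ge 0$ (needed for convexity of $g_t$) is in fact slightly more careful than the paper's phrasing.
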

\begin{proof}
It is clear that, if $g=f_{1}+...+f_{n}$, then $\tilde{g}=\tilde{f_{1}}+...+\tilde{f}_{n}.$
Thus, \eqref{eq:MAoffandtildef} implies that, for every $(t_{1},...,t_{n})\in\mathbb{R}^{n},$
\[
\int_{\mathbb{R}^{n}\times\mathbb{R}^{n}}(dd^{\#}(\sum_{j\in J}t_{j}f_{j}))^{n}=\int_{\mathbb{R}^{n}\times\mathbb{R}^{n}}(dd^{\#}(\sum_{j\in J}t_{j}\tilde{f}_{j}))^{n}\]
and so, by comparing coefficients, we see that\[
\int_{\mathbb{R}^{n}\times\mathbb{R}^{n}}dd^{\#}f_{1}\wedge...\wedge dd^{\#}f_{n}=\int_{\mathbb{R}^{n}\times\mathbb{R}^{n}}dd^{\#}\tilde{f_{1}}\wedge...\wedge dd^{\#}\tilde{f_{n}}.\]
Since $\tilde{f_{i}}$ is the support function of the set $\tilde{A}_{i}$,
we obtain from Proposition \ref{pro:mixedvolumeMA-1} that\[
\int_{\mathbb{R}^{n}\times\mathbb{R}^{n}}dd^{\#}f_{1}\wedge...\wedge dd^{\#}f_{n}=n!\cdot V(\tilde{A_{1}},...,\tilde{A_{n}}).\]
\end{proof}
\begin{rem}
In the complex setting, there is an analogue of the associated function
$\tilde{f}$, called the local indicator associated to a plurisubharmonic
function. See for instance the article \cite{Rashkovskii}. 
\end{rem}
Let us apply the above Proposition to obtain, in an easy way, two
results from tropical geometry. We stress that these are not new results.
Let us assume for the moment that $n=2$. A tropical curve associated
to a tropical polynomial $f=\max_{\alpha\in A}\{c_{\alpha}+\alpha\cdot x\}$
is of degree $d$ if $Newt(f)$ is equal to the set $\{(x,y):x+y\leq d,x,y\geq0\}$.
The tropical version of the Bezout theorem in 2 dimensions is the
following: 
\begin{thm}
Consider two generic tropical curves $C_{1}$ and $C_{2}$ in $\mathbb{R}^{2}$
of degree $d_{1}$ and $d_{2}$ respectively. Then the number of intersection
points counted with multiplicity is equal to $d_{1}d_{2}.$\end{thm}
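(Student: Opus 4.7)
The plan is to deduce the theorem directly from Proposition \ref{pro:mixedMAisequaltoMixedVol} together with the local intersection calculation in Proposition \ref{pro:intersectionoftropicalhyperplanes}. Write $f_1, f_2$ for tropical polynomials defining $C_1, C_2$, with Newton polytopes $\tilde A_1, \tilde A_2$. By hypothesis $\tilde A_i$ is the standard triangle $T_{d_i} = \{(x,y):x,y\geq 0,\, x+y\leq d_i\} = d_i T_1$. The intersection of the two tropical curves is by definition the $(2,2)$-current $V_{f_1}\wedge V_{f_2} = dd^{\#}f_1 \wedge dd^{\#}f_2$, which is a positive measure on $\mathbb{R}^2$, and Proposition \ref{pro:mixedMAisequaltoMixedVol} gives
\[
\int_{\mathbb{R}^2\times\mathbb{R}^2} dd^{\#}f_1 \wedge dd^{\#}f_2 = 2\cdot V(\tilde A_1,\tilde A_2).
\]

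First I would compute the right-hand side. Using the homogeneity $V(\lambda K, \mu L) = \lambda\mu V(K,L)$ of mixed volumes together with $V(K,K) = \mathrm{Vol}(K)$, one obtains $V(T_{d_1},T_{d_2}) = d_1 d_2\, V(T_1,T_1) = d_1 d_2 \cdot \mathrm{Vol}(T_1) = d_1 d_2 /2$. Hence the total mass of $dd^{\#}f_1\wedge dd^{\#}f_2$ equals $d_1 d_2$.

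Next I would use genericity to identify this total mass with an intersection count. For generic $C_1$ and $C_2$, the supports $V_{f_1}$ and $V_{f_2}$ meet only in finitely many points, each lying in the relative interior of some pair of facets $V_i \subset V_{f_1}$ and $W_j \subset V_{f_2}$ whose primitive integer normals $v_i, w_j$ are linearly independent. In a small neighbourhood $U$ of such an intersection point $p$, the functions $f_1$ and $f_2$ agree, up to affine functions, with $\max\{0,v_i\cdot(x-p)\}$ and $\max\{0,w_j\cdot(x-p)\}$ respectively, and affine summands are annihilated by $dd^{\#}$. Therefore on $U$,
\[
dd^{\#}f_1\wedge dd^{\#}f_2 = dd^{\#}\max\{0,v_i\cdot(x-p)\}\wedge dd^{\#}\max\{0,w_j\cdot(x-p)\},
\]
and Proposition \ref{pro:intersectionoftropicalhyperplanes} identifies this with $|\det(v_i,w_j)|\,\delta_p$. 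Summing over all intersection points and using the support inclusion from \eqref{eq:supp_of_intersection_subset_intersection_of_supp} to see that no mass can be hidden elsewhere yields
\[
dd^{\#}f_1\wedge dd^{\#}f_2 = \sum_{p\in C_1\cap C_2} m_p\,\delta_p, \qquad m_p := |\det(v_{i(p)},w_{j(p)})|,
\]
where $m_p$ is the standard tropical intersection multiplicity. Integrating and combining with the total mass $d_1 d_2$ computed above gives $\sum_p m_p = d_1 d_2$, which is the claim.

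The genuinely delicate step is the second one: showing that for generic curves every point of $C_1\cap C_2$ lies in the relative interior of facets with linearly independent normals, so that the local identification with the elementary model of Proposition \ref{pro:intersectionoftropicalhyperplanes} is valid and no contributions come from the lower dimensional strata (vertices of $V_{f_i}$) or from parallel facets. This is where the word \emph{generic} in the statement does its work, and it is the only place in the argument where it is used; the rest is a clean combination of the already-established total-mass formula and the hyperplane-intersection formula.
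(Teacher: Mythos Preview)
Your proof is correct and follows the same route as the paper: apply Proposition~\ref{pro:mixedMAisequaltoMixedVol} and then compute the mixed volume of the two Newton triangles. The only differences are cosmetic: you compute $V(T_{d_1},T_{d_2})$ via bilinearity and $V(T_1,T_1)=\mathrm{Vol}(T_1)$, whereas the paper uses the polarization identity $\mathrm{Vol}(S_{d_1}+S_{d_2})-\mathrm{Vol}(S_{d_1})-\mathrm{Vol}(S_{d_2})$; and you spell out, via Proposition~\ref{pro:intersectionoftropicalhyperplanes} and genericity, why the total mass of $dd^{\#}f_1\wedge dd^{\#}f_2$ equals the tropical intersection count, a step the paper simply asserts in its first sentence.
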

\begin{proof}
If the curve $C_{i}$ corresponds to the tropical polynomial $f_{i}$
then we know that the number of intersection between the curves is
equal to \[
\int_{\mathbb{R}^{2}\times\mathbb{R}^{2}}dd^{\#}f_{1}\wedge dd^{\#}f_{2}.\]
By proposition \ref{pro:mixedMAisequaltoMixedVol} this number is
equal to $n!\cdot V(Newt(f_{1}),Newt(f_{2})).$ But, if we let $S_{d}=\{(x,y):x+y\leq d,x,y\geq0\},$
then \[
V(S_{d_{1}},S_{d_{2}})=Vol(S_{d_{1}}+S_{d_{2}})-Vol(S_{d_{1}})-Vol(S_{d_{1}})=\]
\[
=2^{-1}((d_{1}+d_{2})^{2}-d_{1}^{2}-d_{2}^{2})=d_{1}d_{2},\]
and we are done. 
\end{proof}
In general, we consider equation \eqref{eq:bezoutsequation} to be
the general version of Bezout's theorem. As another application, we
show that the Proposition implies an interesting interplay between
tropical and ordinary polynomials (cf. \cite{Sturmfels}): We begin
by recalling Bernstein's theorem.
\begin{thm}
Let $P_{1},...,P_{n}$ be (generic) polynomials on $\mathbb{C}^{n}.$
Then the number of solutions of the system $P_{1}=...=P_{n}=0$ is
equal to $n!\cdot V(Newt(P_{1}),...,Newt(P_{n})),$ where $Newt(P_{i})$
is the Newton polytope of $P_{i}.$ 
\end{thm}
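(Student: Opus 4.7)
The plan is to derive Bernstein's theorem from the Bezout-type identity \eqref{eq:bezoutsequation} of Proposition \ref{pro:mixedMAisequaltoMixedVol} by bridging complex Laurent polynomials and convex functions on $\mathbb{R}^n$ via the \emph{Ronkin function}. This is the natural analogue, in the super-current framework, of the classical analytic proof of Bernstein's theorem.

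To each generic Laurent polynomial $P_i=\sum_{\alpha\in A_i}c_\alpha^{i}z^\alpha$ on $(\mathbb{C}^*)^n$ I associate its Ronkin function
\[
N_{P_i}(x)=\int_{[0,1]^n}\log\bigl|P_i(e^{x_1+2\pi i\theta_1},\ldots,e^{x_n+2\pi i\theta_n})\bigr|\,d\theta_1\cdots d\theta_n.
\]
Standard facts about $N_{P_i}$ (which I would recall briefly) give that $N_{P_i}$ is convex, piecewise affine on the complement of the amoeba $\mathcal{A}_{P_i}$, and belongs to the Lelong class $\mathcal{L}$; furthermore its asymptotic
\[
\tilde{N}_{P_i}(x)=\lim_{t\to\infty}\frac{N_{P_i}(tx)}{t}=\max_{\alpha\in A_i}\alpha\cdot x
\]
is exactly the support function of $\mathrm{Newt}(P_i)=\mathrm{conv}(A_i)$.

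The core step, which I expect to be the main obstacle, is the identity
\[
\#\{z\in(\mathbb{C}^*)^n:P_1(z)=\cdots=P_n(z)=0\}=\int_{\mathbb{R}^n\times\mathbb{R}^n}dd^{\#}N_{P_1}\wedge\cdots\wedge dd^{\#}N_{P_n}.
\]
The idea, suggested by the remark comparing our setting to $\mathbb{R}^n+i\mathbb{R}^n/\mathbb{Z}^n$, is to use the substitution $z_j=e^{x_j+i\theta_j}$ to identify $(\mathbb{C}^*)^n$ with $\mathbb{R}^n\times\mathbb{T}^n$ and view $\log|P_i|$ as a plurisubharmonic function there. By the complex Poincar\'e--Lelong formula, the intersection $\bigwedge_i dd^c\log|P_i|$ is a sum of Dirac masses at the common zeros in $(\mathbb{C}^*)^n$ (for generic $P_i$, all intersections are transverse and no roots escape to the coordinate hyperplanes). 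Fiberwise integration over the torus directions converts the complex operator $dd^c\log|P_i|$ into the super-current operator $dd^{\#}N_{P_i}$ on $\mathbb{R}^n$, and a Fubini-type computation yields the desired equality.

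Once this identity is in hand, Corollary \ref{cor:mongeamperecomparasion} applied to $N_{P_i}$ and $\tilde{N}_{P_i}$, combined with multilinearity of the mixed Monge--Amp\`ere operator exactly as in the proof of Proposition \ref{pro:mixedMAisequaltoMixedVol}, gives
\[
\int dd^{\#}N_{P_1}\wedge\cdots\wedge dd^{\#}N_{P_n}=\int dd^{\#}\tilde{N}_{P_1}\wedge\cdots\wedge dd^{\#}\tilde{N}_{P_n}=n!\cdot V(\mathrm{Newt}(P_1),\ldots,\mathrm{Newt}(P_n)),
\]
the last equality being Proposition \ref{pro:mixedvolumeMA-1}. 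The principal difficulty is the complex-to-real bridging step: justifying the passage from the complex Monge--Amp\`ere on $(\mathbb{C}^*)^n$ to the super $dd^{\#}$-operator on $\mathbb{R}^n$ via torus averaging, and using the genericity hypothesis to rule out mass being lost to roots escaping towards the boundary of $(\mathbb{C}^*)^n$ (i.e., to the coordinate hyperplanes).
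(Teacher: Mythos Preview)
The paper does not prove this theorem. It is stated as ``Bernstein's theorem'' and explicitly introduced with the words ``We begin by recalling Bernstein's theorem''; no proof is given. The paper then \emph{uses} Bernstein's theorem together with Proposition~\ref{pro:mixedMAisequaltoMixedVol} to deduce a corollary: the number of tropical intersection points of the $\tilde f_i$ equals the number of complex intersection points of the $\{P_i=0\}$. So there is nothing to compare your argument to --- you are attempting something the paper does not attempt.

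That said, your outline is a recognisable and essentially correct route to Bernstein's theorem, in the spirit of Passare--Rullg{\aa}rd and Rashkovskii (the latter is cited in the paper). The asymptotic $\tilde N_{P_i}(x)=\max_{\alpha\in A_i}\alpha\cdot x$ is correct, and once the bridging identity
\[
\#\{P_1=\cdots=P_n=0\ \text{in}\ (\mathbb{C}^*)^n\}=\int_{\mathbb{R}^n\times\mathbb{R}^n} dd^{\#}N_{P_1}\wedge\cdots\wedge dd^{\#}N_{P_n}
\]
is established, the rest follows exactly as you say from Proposition~\ref{pro:mixedvolumeMA-1} and the comparison argument of Corollary~\ref{cor:mongeamperecomparasion}. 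You have correctly flagged the bridging identity as the real work: the torus-averaging step that turns $dd^c\log|P_i|$ into $dd^{\#}N_{P_i}$ is not developed anywhere in the paper (the remark you allude to is only a heuristic), and making it rigorous --- in particular showing that no mass is lost under the push-forward by $\mathrm{Log}$ and that the resulting real Monge--Amp\`ere measures interact correctly with the wedge product --- requires genuine additional input beyond what the paper provides. So your proposal is a plausible proof \emph{sketch}, but it is not a proof that can be assembled from the results of this paper alone.
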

For each polynomial $P_{i}$, we associate the function $\tilde{f}_{i}(x)=\sup_{\xi\in Newt(P_{i})}\xi\cdot x,$
that is, the support function of $Newt(P_{i})$. Clearly, $\tilde{f_{i}}$
belongs to $\mathcal{L}$ and is a tropical polynomial. Then Proposition
\ref{pro:mixedMAisequaltoMixedVol} combined with Bernstein's theorem
says the following: the number (counted with multiplicities) of intersection
points of the tropical hypersurfaces associated to $\tilde{f}_{i}$
is equal to the number of intersection points of the varieties $\{P_{i}=0\}\subset\mathbb{C}^{n}$. 

\bibliographystyle{amsplain}
\nocite{*}
\bibliography{supercurrents}

\end{document}